\documentclass[11pt]{amsart}
\usepackage{geometry}     
\usepackage[parfill]{parskip}    
\usepackage{graphicx}
\usepackage{amssymb}
\usepackage{epstopdf}
\usepackage{xcolor}
\usepackage{fullpage}
\usepackage{tikz-cd}
\usepackage{mathrsfs}
 \usepackage{comment} 
\usepackage[all, 2cell]{xy}
\usepackage{hyperref}
\usepackage{marginnote}
\usepackage[normalem]{ulem}

\theoremstyle{plain}
\newtheorem{theorem}{Theorem}[section]
\newtheorem*{theorem*}{Theorem}

\newtheorem{lemma}[theorem]{Lemma}
\newtheorem{corollary}[theorem]{Corollary}
\newtheorem{proposition}[theorem]{Proposition}

\newtheorem{remark}[theorem]{Remark}

\newtheorem{setup}[theorem]{Setup}

\theoremstyle{definition}
\newtheorem{example}[theorem]{Example}

\newtheorem{definition}[theorem]{Definition}

\newtheorem*{theorem-non}{Theorem}

\newcommand{\A}{\mathcal{A}}

\newcommand{\C}{\mathrm{C}}
\newcommand{\HH}{\mathrm{H}}
\newcommand{\Ee}{\mathcal{E}}
\newcommand{\D}{\mathsf{D}}
\newcommand{\E}{\mathcal{E}}
\newcommand{\F}{\mathcal{F}}

\newcommand{\K}{\mathsf{K}}

\newcommand{\Tt}{\mathbf{T}}
\newcommand{\T}{\mathcal{T}}
\newcommand{\U}{\mathcal{U}}
\newcommand{\V}{\mathcal{V}}
\newcommand{\W}{\mathcal{W}}
\newcommand{\X}{\mathcal{X}}
\newcommand{\Y}{\mathcal{Y}}

\newcommand{\Zn}{\mathbb{Z}}
\DeclareMathOperator{\Hom}{\mathrm{Hom}}
\DeclareMathOperator{\Ext}{\mathrm{Ext}}
\DeclareMathOperator{\dgP}{\mathrm{dgP}}
\DeclareMathOperator{\dgI}{\mathrm{dgI}}
\DeclareMathOperator{\All}{\mathrm{All}}
\DeclareMathOperator{\Ac}{\mathrm{Ac}}
\DeclareMathOperator{\Ko}{\mathrm{K}}
\newcommand{\Mod}{\mathrm{Mod}\text{-}}
\newcommand{\Proj}{\mathrm{Proj}\text{-}}
\newcommand{\Inj}{\mathrm{Inj}\text{-}}
\newcommand{\inj}{\mathrm{inj}\text{-}}
\newcommand{\proj}{\mathrm{proj}\text{-}}
\newcommand{\fmod}{\mathrm{mod}\text{-}}
\newcommand{\Ch}{\mathrm{Ch}}
\newcommand{\hclim}{\mathrm{hocolim}}
\newcommand{\clim}{\mathrm{colim}}
\newcommand{\hlim}{\mathrm{holim}}
\newcommand{\thick}{\mathrm{thick}}
\newcommand{\tria}{\mathrm{tria}}
\newcommand{\Add}{\mathrm{Add}}
\newcommand{\add}{\mathrm{add}}
\newcommand{\Prod}{\mathrm{Prod}}
\DeclareMathOperator{\Coker}{\mathrm{Coker}}
\newcommand{\Fac}{\mathrm{Fac}}
\newcommand{\Sub}{\mathrm{Sub}}

\title{Limits and colimits in silting theory with applications to the wall and chamber structure of an algebra}
\author{Rosanna Laking}
\address[Laking]{Dipartimento di Informatica - Settore di Matematica
Università degli Studi di Verona,
Strada le Grazie 15,
37134 Verona, Italy}
\email{rosanna.laking@univr.it}
\author{Alexandra Zvonareva}
\address[Zvonareva]{Institute of Mathematics of the Czech Academy of Sciences, Žitná 25, 115 67 Prague,
Czech Republic}
\email{zvonareva@math.cas.cz}

\begin{document}
\maketitle

\begin{abstract}
    In this paper we consider a family of nested t-structures given by silting objects and  construct a silting object corresponding to the intersection of aisles of these t-structures as a homotopy colimit. The dual construction for the cosilting case is given as a homotopy limit. The results are applied to construct two-term large silting objects corresponding to the numerical torsion pairs and the limiting walls in the wall and chamber structure of the real Grothendieck group of a finite dimensional algebra. In particular, in case the algebra is tame we can describe any numerical torsion pair in this way by combining our results with results of Plamondon and Yurikusa. 
\end{abstract}

\section{Introduction}

The wall and chamber structure and the $g$-vector fan of an algebra connect the areas of cluster algebras and Bridgeland stability conditions with silting theory and $\tau$-tilting theory. The wall and chamber structure of the real Grothendieck group of a finite-dimensional algebra $A$ was introduced by Bridgeland \cite{Bridgeland} using King's stability. It gives a partition of $\Ko_0(\proj A) \otimes_\mathbb{Z} \mathbb{R} \simeq \mathbb{R}^l$, where $l$ is the rank of the Grothendieck group of $A$. The walls of the wall and chamber structure correspond to semistable modules in the sense of King, while the chambers are the open connected components of $\mathbb{R}^l$ with the closure of all walls removed. 
The union of the walls form the support of the Hall algebra scattering diagram, which provides a connection to cluster algebras and mirror symmetry. 
Bridgeland shows that the wall and
chamber structure
governs the wall crossing phenomena in an open subset of the space of Bridgeland stability conditions, a complex manifold associated to the category $\D^b(\fmod A)$ and parameterising certain homological information including bounded t-structures satisfying some technical conditions. 

The $g$-vector fan of an algebra was introduced in \cite{DIJ}. It gives a similar way to partition $\mathbb{R}^l$ from the perspective of silting theory.
Classical silting theory is mainly concerned with silting objects in ``small'' triangulated categories, such as the category of perfect complexes $\K^b(\proj A)$ over a finite dimensional algebra $A$ or other Hom-finite, Krull-Schmidt triangulated categories. Classical silting objects can be considered as generalisations of tilting complexes, which govern equivalences between derived categories of rings. They have a rich theory of mutations connected to cluster algebras and they parametrise certain well-behaved t-structures and co-t-structures. For instance, the Koenig-Yang correspondence states that equivalence classes of silting objects in $\K^b(\proj A)$ are in bijection with bounded t-structures with length hearts in the bounded derived category $\D^b(\fmod A)$ \cite{KY}. This provides a tight connection between such silting objects and the wall and chamber structure of the Bridgeland stability condition manifold of $\D^b(\fmod A)$.

The $g$-vector fan  is a polyhedral fan in $\Ko_0(\proj A) \otimes_\mathbb{Z} \mathbb{R} \simeq \mathbb{R}^l$ spanned by the classes or $g$-vectors of two-term partial silting complexes (summands of two-term silting complexes) in $\Ko_0(\proj A)\simeq \Ko_0(\K^b(\proj A))$. 
The connection between the $g$-vector fan of an algebra and the wall and chamber structure in given in \cite{Asai, Bridgeland, BST}. The chambers correspond bijectivelty to two-term silting complexes or cones of maximal dimension in the $g$-vector fan and the walls between two neighboring chambers corresponds to irreducible mutations of two-term silting complexes or cones of codimension 1. 

However, not all walls in the wall and chamber structure can be interpreted via classical silting theory. For example, consider the path algebra of the Kronecker quiver $A=kQ$, $Q=1\rightrightarrows 2$. The wall and chamber structure of $A$ is well-know and is schematically depicted on Figure \ref{fig:my_labelI}. In this case there is one ray, which does not appear in the g-vector fan of $A$, but gives a wall in the wall and chamber structure, this ray corresponds to $\theta$-semistable modules with $\theta=a(1,-1)^t$, $a\geq 0$.  One of the motivations for this paper is to interpret such limiting walls from the perspective of silting theory. However, in order to achieve this goal we will have to leave the realm of classical silting theory and consider large silting objects.

\begin{figure}[!ht]
\centering
\begin{tikzpicture}[scale=0.80]
\tikzstyle{every node}=[font=\fontsize{10.9pt}{15.5pt}\selectfont]
\draw [-{Stealth[scale=1.5]}, ] (6.25,9.75) -- (13.75,9.75);
\draw [-{Stealth[scale=1.5]}, ] (10,6) -- (10,13.5);
\draw [dashed] (10,9.75) -- (13.75,6);
\draw  (10,9.75) -- (13.75,7.875);
\draw  (10,9.75) -- (11.875,6);
\node [font=\fontsize{10.9pt}{15.5pt}\selectfont, inner xsep=0.080cm, inner ysep=0.085cm, rounded corners=0.020cm] at (10.5,13.375) {$[P_1]$};
\node [font=\fontsize{10.9pt}{15.5pt}\selectfont, inner xsep=0.080cm, inner ysep=0.085cm, rounded corners=0.020cm] at (14.125,9.625) {$[P_2]$};
\draw  (10,9.75) -- (13.75,7.25);
\draw  (10,9.75) -- (12.5,6);
\draw  (10,9.75) -- (13.75,7);
\draw  (10,9.75) -- (12.75,6);
\node [font=\fontsize{10.9pt}{15.5pt}\selectfont, inner xsep=0.080cm, inner ysep=0.085cm, rounded corners=0.020cm] at (13.75,6.625) {\small $\vdots$};
\node [font=\fontsize{11.9pt}{15.5pt}\selectfont, inner xsep=0.080cm, inner ysep=0.085cm, rounded corners=0.020cm] at (13.25,6) {\small $\hdots$};


\node [font=\fontsize{11.9pt}{15.5pt}\selectfont, inner xsep=0.080cm, inner ysep=0.085cm, rounded corners=0.020cm] at (13.95,5.8) { $\theta$};
\end{tikzpicture}
\caption{The wall and chamber structure for the Kronecker quiver.}
\label{fig:my_labelI}
\end{figure}

Large silting theory abstracts the notion of silting to more general triangulated categories, namely, triangulated categories with coproducts, such as the derived category of a ring $\D(A)$. 
Large silting objects can be also used to parametrise well behaved t-structures. In a compactly generated triangulated category, equivalence classes of silting objects are in bijection with nondegenerate cosmashing t-structures whose heart admits a projective generator \cite{NSZ}. The dual notion of cosilting can be used to parametrise nondegenerate smashing t-structures with the heart admitting an injective cogenerator. The theory of mutations of cosilting objects was recently extended to this general context in \cite{AHLSV}.

After their introduction in \cite{NSZ,PV}, a plethora of theoretical results concerning the behavior and properties of large silting and cosilting objects was obtained. However, there are no general systematic methods of constructing examples of large silting and cosilting objects even in the situation of the derived category of a ring. One of the main goals of this paper is to bridge this gap by considering colimits of silting and limits of cosilting objects.

Our constructions are inspired by the work of Buan and Solberg \cite{BS}, Braga and Coelho \cite{BC}, and Braga \cite{BC2}. 
There, the authors study when inverse limits of finitely generated cotilting modules of injective dimension at most $n$, and direct limits of tilting modules of projective dimension at most $n$, remain cotilting and tilting, respectively.
Unlike in the category of modules we cannot directly compute limits and colimits in the derived category. Instead we use homotopy limits and colimits defined via model structures on the category of chain complexes $\Ch(A)$.

We will consider four settings depending on the level of generality: countable directed colimits of silting objects over arbitrary rings; uncountable directed colimits of silting objects over right perfect or right noetherian rings; countable inverse limits of cosilting objects over arbitrary rings; and uncountable inverse limits of cosilting objects of cofinite type over left perfect rings. Note that right perfect rings include finite dimensional algebras over fields, so the results in this case will be applicable to our motivating problem.
The reason we have to impose additional conditions on the ring in the uncountable case is that the category of bifibrant objects for various model structures whose homotopy category gives the derived category of a ring is usually not closed under limits and colimits. 

We will further restrict our attention to $n$-silting and $n$-cosilting objects, which is in some sense analogous to the projective or injective dimension being at most $n$ in the case of tilting and cotilting modules. Such silting objects correspond to $n$-intermediate t-structures, that is t-structures whose aisles are sandwiched between the aisle of the standard t-structure and its shift by $n$. This is a technical assumption frequently used in the literature and guaranteeing additional control over the behavior of the t-structure. For example in the terminology of Neeman any t-structure in $\D(A)$ in the preferred equivalence class will be of this type up to shift \cite{Neem}. If $A$ is a finite-dimensional algebra, any lift of a bounded t-structure is $n$-intermediate up to shift \cite{MZ}. Similar to the silting case, an $n$-cosilting object gives an associated t-structure that is again $n$-intermediate up to shift. 
The utility of this condition can be, for example, demonstrated by \cite{MVSiltCosilt}, where the authors prove that any $n$-cosilting object is pure injective, which still remains an open question for general cosilting objects.

Let us describe the main results of the paper. For simplicity we will state the results only for the countable case, the case of an arbitrary ordinal can be found in Sections \ref{SecContSilt} and \ref{SecCoCont}. For an arbitrary ring $A$, we will consider a sequence $T_0,T_1,\dots$ of $n$-silting objects in $\D(A)$ such that the associated aisles form a descending chain.
The intersection $\U$ of the family of nested aisles $\U_i$ is an aisle of a t-structure corresponding to an $n$-silting object $T$. This can be seen by combining \cite[Proposition 6.3]{LV} and \cite[Theorem 4.6]{AMVSiltMod}. However it is not clear how to construct $T$ explicitly. This is the content of our first result.

\begin{theorem*}[\ref{ThmSilt}]
Let $A$ be a ring and let  $T_i$, $i\geq 0$ be a sequence of $n$-silting objects in $\D(A)$ with corresponding aisles $\U_i$. Assume that there is an inclusion of aisles $\U_{i+1} \subseteq \U_i$ for all $i\geq 0$.
Then there is a direct system 
\[
    T_0' \rightarrow T_1' \rightarrow T_2' \rightarrow \dots
\in \Ch(A)\] with $\Add(T_i)=\Add(T'_i)$ and such that $T := \hclim_{\mathbb{N}} T'_i$ is an $n$-silting object in $\D(A)$ with corresponding aisle $\U=\cap_{i\geq 0}\U_i$.
\end{theorem*}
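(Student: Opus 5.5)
We first replace each $T_i$ by a convenient representative, then build maps between them, and finally check the silting axioms for the homotopy colimit. Since $T_i$ is $n$-silting, its aisle $\U_i = T_i^{\perp_{>0}}$ lies between $\D^{\leq -n}$ and $\D^{\leq 0}$ (up to the paper's normalisation). We may therefore represent $T_i$ by a complex of projectives concentrated in degrees $[-n,0]$, a two-sided bounded K-projective complex.

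\textbf{Step 1: the direct system.} The inclusion $\U_{i+1}\subseteq\U_i$ means $T_{i+1}\in\U_i$. We take a right $\Add(T_{i})$-approximation, or rather construct $T'_{i+1} := T_{i+1}\oplus T'_i$; since $T'_i\in\U_i$ but possibly not in $\U_{i+1}$, this naive sum is wrong. We reverse the logic instead. Since $\U_{i+1}\subseteq \U_i = \Sigma$-closed, coproduct-closed hull of $T_i$ (via $\U_i = {}^{\perp}(T_i^{\perp_{\le 0}})$), each $T_{i+1}$ admits a filtration by objects of $\Add(T_i)$ and its shifts. The natural candidate is to choose inductively $T'_0=T_0$ and, for $T'_{i}\to T'_{i+1}$, a chain map with cone in $\Add(\Sigma^{\ge1}T_{i+1})$-type objects. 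Concretely, we take the $\U_{i+1}$-truncation of $T'_i$ and compare it with the approximation triangle $T'_i\to B\to C$, where $B\in\Add(T_{i+1})$ is a left $\Add(T_{i+1})$-approximation adjusted so that $\Add(T'_{i+1})=\Add(T_{i+1})$. We set $T'_{i+1}=B\oplus T_{i+1}$ and realise the morphism as a chain map between bounded complexes of projectives in degrees $[-n,0]$ using K-projectivity.

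\textbf{Step 2: the colimit and the aisle.} The homotopy colimit of a sequence of complexes of projectives in degrees $[-n,0]$ is computed by the ordinary colimit, or by the telescope triangle $\bigoplus T'_i\to\bigoplus T'_i\to T\to$. Hence $T$ has flat components in degrees $[-n,0]$, or we use Milnor's triangle directly. For $X\in\D(A)$ the Milnor sequence gives
\[0\to \lim{}^1\Hom(T'_i,\Sigma^{k-1}X)\to \Hom(T,\Sigma^k X)\to \lim \Hom(T'_i,\Sigma^kX)\to 0.\]
If $X\in\U$, then $X\in\U_i$ for all $i$, so $\Hom(T'_i,\Sigma^kX)=0$ for $k>0$ and $T\in{}^{\perp}\U[1]$-type vanishing follows, except for $k=1$, where $\lim^1$ of $\Hom(T'_i,X)$ appears. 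Conversely, if $\Hom(T,\Sigma^{>0}X)=0$, we need $X\in\U_i$ for all $i$. Here the construction in Step 1 is used: the cones of $T'_i\to T'_{i+1}$, and hence of $T'_i\to T$, should lie in the aisle generated positively, which lets us transfer vanishing from $T$ back to each $T'_i$.

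\textbf{Step 3: silting axioms.} We need $T\in T^{\perp_{>0}}$, $T^{\perp_{>0}}$ closed under coproducts, and generation. Coproduct closure follows once we show $T^{\perp_{>0}}=\U$, since aisles of the intersection are coproduct-closed. Generation holds because $\U\supseteq \D^{\le -n}$ forces $T^{\perp_{\mathbb Z}}=0$. By \cite[Theorem 4.6]{AMVSiltMod} it then suffices to identify $T^{\perp_{>0}}$ with $\U$ and check $T\in\U$. We get $T\in\U$ since each $T'_j$ for $j\ge i$ lies in $\U_i$, and aisles are closed under homotopy colimits.

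\textbf{Main obstacle.} The main difficulty is the $\lim^1$ term and the converse inclusion $T^{\perp_{>0}}\subseteq\U$. To kill $\lim^1\Hom(T'_i,X)$ for $X\in\U$, the direct system should make $\Hom(T'_{i+1},X)\to\Hom(T'_i,X)$ surjective for $X\in\U_{i+1}$, i.e., the maps $T'_i\to T'_{i+1}$ should be left $\U_{i+1}$-approximations. This is exactly how we would choose them in Step 1: take the approximation $T'_i\to B$ with cone in $\U_{i+1}^{\perp}$-adjusted form. Then Mittag-Leffler gives vanishing of $\lim^1$, and the same approximation property yields the converse inclusion.
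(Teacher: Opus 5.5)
Your treatment of $T\in\U$ and of $\U\subseteq T^{\perp_{>0}}$ matches the paper's Lemma~\ref{Ext-projective}: you choose $T'_i\to T'_{i+1}$ as $\U_{i+1}$-preenvelopes, and then Mittag-Leffler kills $\lim^1$. Lifting a sequence of morphisms of $\D(A)$ to chain maps is harmless. The gap is generation of $T$, equivalently the inclusion $T^{\perp_{>0}}\subseteq\U$.

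\begin{itemize}
\item Your stated reason, that $\U\supseteq\D^{\le -n}$ forces $T^{\perp_{\mathbb Z}}=0$, is not valid: a lower bound on $\U$ says nothing about objects orthogonal to all shifts of $T$. What would work is that $T^{\perp_{\mathbb Z}}$ is a shift-closed subcategory of $T^{\perp_{>0}}$, hence zero once $T^{\perp_{>0}}=\U\subseteq\D^{\le 0}$. But that presupposes the converse inclusion.
\item For the converse inclusion you only say that the cones ``should lie in the aisle generated positively'' and that ``the same approximation property'' yields it. Neither holds. The cones of your maps lie in $\W_{i+1}[1]$, which gives no control of $\Hom(-,X[\ge 2])$ for an $X$ not yet known to lie in any $\U_i$.
\item The preenvelope property plus Mittag-Leffler cannot detect generation at all. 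Take the Kronecker algebra of Example~\ref{Kronecker} with $T_0=A$ and $T_i=P_2\oplus(P_1\to P_2^{(2)})$ for all $i\ge1$. The system $A\to P_2^{3}\xrightarrow{=}P_2^{3}\xrightarrow{=}\cdots$ consists of $\U_{i+1}$-preenvelopes and satisfies everything in your Step 2. Yet its colimit $P_2^3$ is not a generator, since $\Hom(P_2,S_1[k])=0$ for all $k$.
\end{itemize}

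So the condition $\Add(T'_i)=\Add(T_i)$ is what must carry generation to the colimit, and your proposal gives no mechanism for this. The summand $T_{i+1}$ you add is sent onwards only by (in general non-split) preenvelopes, and nothing shows that a generator survives in $\hclim_{\mathbb N}T'_i$.

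The missing idea is to build the system together with compatible finite coresolutions of $A$. The paper lifts the co-t-structures $(\W_i,\U_i)$ to complete hereditary cotorsion pairs $(\F_i,\T_i)$ in the Frobenius category $\E$ of bifibrant complexes \cite{SS}. Proposition~\ref{SiltApp} then produces a commutative ladder of exact sequences $0\to A\to T_i^{(1)}\to\cdots\to T_i^{(n)}\to 0$ in $\E$, with $T_i^{(j)}\in\F_i\cap\T_i$ and \emph{every} vertical map a special $\T_{i+1}$-preenvelope. This needs the modification by the extra summands $\overline{U_i}$, since arbitrary induced maps on the cokernels would not be preenvelopes.

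Directed colimits are exact in $\Ch(A)$, so the colimit is an exact sequence $0\to A\to T^{(1)}\to\cdots\to T^{(n)}\to 0$. Hence $A\in\thick(T)$ and $T$ generates. Working only with triangles in $\D(A)$ would not give this, because cones and homotopy colimits are not functorial there.

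Once generation is known, the converse inclusion is automatic. By \cite[Remark 3]{NSZ} and the Mittag-Leffler lemma, $T$ is silting, so $\U_T$ is the smallest aisle containing $T$. Since $T\in\U$, this gives $\U_T\subseteq\U$.
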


The direct system in the category of chain complexes $\Ch(A)$ is constructed explicitly in Corollary \ref{Cor: diagram Frob}. For that we use the fact that the co-t-structures corresponding to the silting objects $T_i$ can be lifted to give cotorsion pairs on the category of bifibrant objects by \cite{SS}. Note that the condition $\Add(T_i)=\Add(T'_i)$ guarantees that $T_i$ and $T'_i$ generate the same t-structure, which is exactly the equivalence relation usually considered for silting objects, so replacing $T_i$ by $T'_i$ does not lead to any loss of generality.

The results in the case of a right perfect or a right noetherian ring are quite similar in nature. They describe an $n$-silting complex corresponding to a family of nested aisles parametrised by an arbitrary ordinal $\mu$ as a homotopy colimit of a continuous direct $\mu$-system of the corresponding $n$-silting objects, see Theorem \ref{contcolim}.

Let us now turn to $n$-cosilting objects.
In the countable case we get the following result, the corresponding diagram is constructed in Corollary \ref{Cor: co diagram Frob}.

\begin{theorem*}[\ref{ThmCoSilt}] 
Let $A$ be a ring and let  $C_i$, $i\geq 0$ be a sequence of $n$-cosilting objects in $\D(A)$ with the corresponding coaisles $\V_i$. Assume that there is an inclusion $\V_{i+1} \subseteq \V_i$ for all $i\geq 0$.
There exists an inverse system 
\[
    \dots \rightarrow C_2' \rightarrow C_1'\rightarrow C_0' \in \Ch(A)
\]
 with $\Prod(C'_i)=\Prod(C_i)$ such that $C := \lim_{\mathbb{N}^{op}}  C_i$ is  an $n$-cosilting object in $\D(A)$ with  $C\simeq \hlim_{\mathbb{N}^{op}}  C_i$ and with the corresponding coaisle $\V=\cap_{i\geq 0}\V_i$.
\end{theorem*}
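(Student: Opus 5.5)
We follow the dual of the strategy for Theorem \ref{ThmSilt}, working on the injective side. First we replace each $C_i$ by a suitable complex $C_i'$ of injective modules (a dg-injective, i.e.\ fibrant, replacement in the injective model structure on $\Ch(A)$). Since $C_i$ is $n$-cosilting, after a shift it is isomorphic to a complex of injectives concentrated in degrees $[0,n]$. The coaisles $\V_i={}^{\perp_{\le 0}}C_i$, or equivalently the co-t-structures $({}^{\perp}\Prod C_i, \ldots)$ cogenerated by $C_i$, lift by \cite{SS} to complete cotorsion pairs on the Frobenius exact category of bifibrant objects. The inclusion $\V_{i+1}\subseteq \V_i$ translates into $\Prod(C_i)$ lying in the co-aisle side determined by $C_{i+1}$. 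We then use the cotorsion pair for $C_{i+1}$ to take a special approximation of $C_i'$, producing a map $C_{i+1}'\to C_i'$ which is a degreewise split epimorphism. Its kernel lies in the appropriate orthogonal class, and $C_{i+1}'$ is replaced within $\Prod(C_{i+1})$; this is Corollary \ref{Cor: co diagram Frob}, and it gives $\Prod(C_i')=\Prod(C_i)$. Because all terms are complexes of injectives in degrees $[0,n]$ and the transition maps are degreewise split surjections, the tower is fibrant in the projective (Reedy) model structure on $\Ch(A)^{\mathbb N^{op}}$. Hence $\lim C_i'\simeq \hlim C_i'$, and degreewise the limit is a product-like limit of split epis of injectives, so it is again injective. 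Thus $C:=\lim C_i'$ is a complex of injectives in degrees $[0,n]$.

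Next we show that $C$ is $n$-cosilting with coaisle $\V=\bigcap_i\V_i$. We use the Milnor triangle $C\to\prod C_i'\xrightarrow{1-\mathrm{shift}}\prod C_i'\to C[1]$. For $X\in \V$, we have $\Hom(X,C_i'[k])=0$ for all $i$ and all $k>0$. The Milnor sequence then gives $\Hom(X,C[k])=0$ for $k\ge 2$. For $k=1$ it reduces to the vanishing of $\lim^1\Hom(X,C_i')$, which is the delicate point. To handle it we use the splitting of the transition maps: they are split epis in the Frobenius category and their kernels are cogenerated appropriately. So for $X\in\V$ the maps $\Hom(X,C_{i+1}')\to\Hom(X,C_i')$ are surjective, because the cone lies in $\Prod(C_{i+1})$-shifted and $X$ is left orthogonal to it. The Mittag-Leffler condition then kills $\lim^1$. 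This shows $\V\subseteq{}^{\perp_{>0}}C$.

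Conversely, if $\Hom(X,C[k])=0$ for all $k>0$, we must deduce $X\in \V_i$ for every $i$. For this we argue with the cosilting characterization of \cite{MVSiltCosilt} or \cite{AHLSV}: a pure-injective complex $C$ with ${}^{\perp_{>0}}C$ an intermediate coaisle which also cogenerates. We show $\Prod(C)$ contains enough of each $C_i$ in the limit. Alternatively, and preferably, we use the known fact (from \cite{LV} and \cite{AMVSiltMod} duals) that $\V$ is the coaisle of some $n$-cosilting $C''$, and prove $\Prod C=\Prod C''$ by checking two things: that $C\in\V\cap\V^{\perp_{>0}}$, where $C\in\V$ because each $C_j'$ with $j\ge i$ lies in $\V_i$ and coaisles of smashing type are closed under homotopy limits, and that $C$ cogenerates $\D(A)$ in the intermediate sense. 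Once the Ext-vanishing $C\in\V^{\perp_{>0}}$ is established, $C$ lies in the heart's injectives $\Prod(C'')$. Cogeneration then follows from the triangle above, since $C_0'$ cogenerates.

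We expect the main obstacle to be the $\lim^1$ vanishing and the surjectivity of $\Hom(X,C_{i+1}')\to\Hom(X,C_i')$. Both rely entirely on constructing the tower so that the fibres of the transition maps lie in the correct cotorsion class. Our first attempt will be to extract this directly from the lifted cotorsion pairs of \cite{SS}.
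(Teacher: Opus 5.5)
\textbf{Gap: cogeneration of the limit.} Your first half matches the paper: the tower of Corollary \ref{Cor: co diagram Frob}, fibrancy so that $\lim=\hlim$, the Milnor triangle, Mittag--Leffler giving $\Hom_{\D(A)}(X,C[>0])=0$ for $X\in\V$ (Lemmas \ref{MilnorLim}, \ref{Ext-injective}), and $C\in\V$. The gap is cogeneration of $C$, which your second half cannot do without.

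Comparing with a cosilting $C''$ with $\V^{C''}=\V$ only gives $C\in\V\cap\V^{\perp_{>0}}=\Prod(C'')$. Every product-summand of $C''$ satisfies this, even $0$. To conclude ${}^{\perp_{>0}}C\subseteq\V$, hence $\Prod(C)=\Prod(C'')$, you need $C$ to cogenerate. Then the $\U''$-part $U$ of any $X$ with $\Hom(X,C[>0])=0$ has $\Hom(U,C[k])=0$ for all $k$, so $U=0$.

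Your justification, ``cogeneration follows from the triangle above, since $C_0'$ cogenerates'', does not work. If $\Hom(Y,C[k])=0$ for all $k$, the Milnor triangle only says that $1-(f_i^*)$ is bijective on $\prod_i\Hom(Y,C_i'[k])$, i.e.\ that $\lim$ and $\lim^1$ of these towers vanish. That already happens whenever the induced transition maps are zero, and it places no constraint on $\Hom(Y,C_0'[k])$.

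The missing idea is the one the paper uses. The rows of Diagram \ref{eq: exact tower} are exact sequences of towers $0\to C^{(n)}_\bullet\to\dots\to C^{(1)}_\bullet\to E\to 0$ ending in the constant tower $E$. Applying the exact functor $\hlim$, which is $\lim$ on the fibrant columns and $E$ on the constant tower, gives triangles exhibiting $E\in\thick(C^{(1)},\dots,C^{(n)})\subseteq\thick(C)$. So $C$ cogenerates because $E$ does. This is why the tower is built via the dual of Proposition \ref{SiltApp}, so that every row still resolves $E$. Your description, taking a special $\F_{i+1}$-precover of $C_i'$, would not give this.

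Once cogeneration is in hand, you can finish your way. Alternatively, as the paper does, verify \cite[Theorem 2.8]{Breaz} with $\V'=\V$. That avoids importing the existence of $C''$, an extra input you attribute only loosely to duals of \cite{LV} and \cite{AMVSiltMod}.

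\textbf{Smaller inaccuracies.} The transition maps are admissible (degreewise split) epimorphisms in $\E$, not split epimorphisms in the Frobenius category. If they were split, $C_i\in\Prod(C_{i+1})$ and all the coaisles would coincide. Their fibres $K$ lie in $\T_{i+1}$, i.e.\ in $\W_{i+1}=\V_{i+1}^{\perp}$, and not in general in a shift of $\Prod(C_{i+1})$. Surjectivity of $\Hom(X,C'_{i+1})\to\Hom(X,C'_i)$ for $X\in\V$ comes from $\Hom(X,K[1])=\Hom(X[-1],K)=0$, using $X[-1]\in\V_{i+1}$. Also, objects produced by approximations in $\dgI$ need not be concentrated in degrees $[0,n]$. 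Fibrancy of the tower only needs the kernels to lie in $\E$, and the $n$-cosilting property follows from intermediacy of $(\U^C,\V^C)$ via \cite[Proposition 4.17]{PV}.
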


Note that the condition $\Prod(C'_i)=\Prod(C_i)$ defines the equivalence of cosilting objects and implies that the t-structures corresponding to $C_i$ and $C'_i$ coincide. Unlike the silting case, where the exactness of directed colimits guarantees that the colimit in the category of chain complexes gives the homotopy colimit in $\D(A)$, in the cosilting case we need to check that the diagram constructed in Corollary \ref{Cor: co diagram Frob} is fibrant to guarantee that the limit in $\Ch(A)$ describes the homotopy limit in $\D(A)$.

In the uncountable case, the condition for the subcategory of bifibrant objects to be closed under the desired inverse limits seems to be quite restrictive. To circumvent this problem we consider $n$-cosilting objects of cofinite type, these cosilting objects correspond to compactly generated t-structures and are in bijection with silting objects over the opposite ring \cite{AHpara, BirdWill}. 
Our results in this case hold for left perfect rings. They describe an $n$-cosilting complex corresponding to a family of nested coaisles parametrised by an arbitrary ordinal $\mu$ as a homotopy colimit of a continuous inverse $\mu$-system of the corresponding $n$-cosilting objects of cofinite type, see Theorem \ref{ThmCoCont}.

In Section \ref{SecApp} we apply the obtained results to the study of the wall and chamber structure of the real Grothendieck group of a finite-dimensional algebra, thus connecting back to the world of classical silting theory. For a finite-dimensional algebra $A$ any vector $\theta$ in $\Ko_0(\proj A) \otimes_\mathbb{Z} \mathbb{R}\simeq \mathbb{R}^l$ gives rise to a numerical torsion pair $(\overline{\T}_\theta,\F_\theta)$ and to the corresponding t-structure via the Happel-Reiten-Smal{\o} tilt. The torsion pair $(\overline{\T}_\theta,\F_\theta)$ is tightly connected to the subcategory of $\theta$-semistable modules over $A$. If $\theta$ is the class of a two-term silting complex $T\in \K^b(\proj A)$ in the Grothendieck group $\Ko_0(\K^b(\proj A))\simeq \Ko_0(\proj A)$, the corresponding t-structure is the t-structure associated to $T$. Thus, moving around  $\mathbb{R}^l$ gives a way of producing sequences of two-term silting complexes $T_i$ in $\K^b(\proj A)$ whose aisles are nested whenever their $g$-vectors satisfy $\theta^{i+1}\leq \theta^i$. As scaling the vector $\theta$ by a positive multiple does not change the torsion pair $(\overline{\T}_\theta,\F_\theta)$, we can replace the vectors $\theta$ with $\epsilon\theta$, for $\epsilon>0$ to allow more freedom. 
The main result of this section states that  the homotopy colimit of two-term classical silting objects can be used to describe the numerical torsion pair corresponding to the limiting ray in the wall and chamber structure, which answers our initial motivating question. 

\begin{theorem*}[\ref{LimSilt}]
Let $A$ be a finite dimensional algebra over an algebraically closed field.    Let $\theta \in \Ko_0(\proj A)\otimes_\mathbb{Z}\mathbb{R}$.
    Let $T_i \in \K^b(\proj A)$, $i\geq 0$ be two-term silting complexes such that $[T_i]=\epsilon_i\theta^i$  for some  $\epsilon_i\in \mathbb{R}_{>0}$  and  \[\theta \leq \dots\leq \theta^{i+1}\leq \theta^i\leq \dots \leq \theta^0 \text{ with } \underset{i\rightarrow \infty}{\mathrm{lim}} \theta^i=\theta.\] 
    Let $T\in \D(A)$ be the two-term silting complex constructed from the sequence $T_i$ according to the algorithm described in Theorem \ref{ThmSilt}. Then $\U_{T}\cap \fmod A =\overline{\T}_{\theta}$.
\end{theorem*}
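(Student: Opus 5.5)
Since $T$ comes from Theorem \ref{ThmSilt}, its aisle is $\U_T=\bigcap_{i\geq 0}\U_{T_i}$. So I would prove the statement by computing this intersection on the module side. For a two-term silting complex $T_i\in\K^b(\proj A)$, the aisle $\U_{T_i}$ is the HRS-tilt aisle of the torsion pair $(\T_i,\F_i)$, where $\T_i=\Fac H^0(T_i)$. Concretely, $\U_{T_i}$ consists of complexes $X$ with $H^k(X)=0$ for $k>0$ and $H^0(X)\in\T_i$. Hence
\[
\U_T\cap\fmod A=\bigcap_{i\geq 0}\T_i .
\]
Next I would use the known identification, e.g.\ from \cite{Asai, BST}, of the torsion class of a two-term silting complex with a numerical torsion class. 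If $[T_i]=\epsilon_i\theta^i$, then $\T_i=\overline{\T}_{\epsilon_i\theta^i}=\overline{\T}_{\theta^i}$, using that positive scaling does not change the torsion pair. The statement therefore reduces to
\[
\bigcap_{i\geq 0}\overline{\T}_{\theta^i}=\overline{\T}_\theta .
\]

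Recall that $\overline{\T}_\eta$ consists of the modules $M$ with $\eta([N])\geq 0$ for every quotient $N$ of $M$. Here $\eta(-)$ denotes the pairing of $\Ko_0(\proj A)\otimes\mathbb{R}$ with dimension vectors, i.e.\ $\langle \eta,\underline{\dim}\,N\rangle$.

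For the inclusion $\supseteq$, the order $\theta\leq\theta^i$ is meant coordinatewise in the basis of indecomposable projectives. Since dimension vectors are nonnegative, $\theta([N])\leq\theta^i([N])$ for every module $N$. So $\theta([N])\geq 0$ implies $\theta^i([N])\geq0$, which gives $\overline{\T}_\theta\subseteq\overline{\T}_{\theta^i}$ for every $i$. The same argument applied to $\theta^{i+1}\leq\theta^i$ shows that the aisles are nested, which is exactly the hypothesis needed to invoke Theorem \ref{ThmSilt}.

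For the inclusion $\subseteq$, take $M\in\bigcap_i\overline{\T}_{\theta^i}$ and a quotient $N$ of $M$. Then $\theta^i([N])\geq 0$ for all $i$. The map $\eta\mapsto\eta([N])$ is linear, hence continuous, and $\theta^i\to\theta$, so $\theta([N])=\lim_i\theta^i([N])\geq 0$. Thus $M\in\overline{\T}_\theta$. Note that the closed (non-strict) inequality in the definition of $\overline{\T}$ is exactly what makes this limit argument work. Passing to the limit is precisely the reason the result concerns $\overline{\T}_\theta$ and not the strict version $\T_\theta$.

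I expect the main obstacle to be the first step, namely justifying $\Fac H^0(T_i)=\overline{\T}_{[T_i]}$ with the sign conventions of the paper. One needs $\overline{\T}_{[T]}=\Fac H^0(T)$ rather than the strict $\T_{[T]}$ or its torsion-free counterpart. I would check this by the standard argument. First, $\Hom_{\D(A)}(T,N[1])=0$ for $N\in\Fac H^0(T)$. Second, for any module $N$ one has $\theta_T([N])=\dim\Hom(T,N)-\dim\Hom(T,N[1])$. For a quotient $N$ of a module in $\Fac H^0(T)$, the second term vanishes, so $\theta_T([N])\geq 0$. For the converse, suppose $M\notin\Fac H^0(T)$. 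Its torsion-free quotient $F\neq 0$ satisfies $\Hom(T,F)=0$. Moreover $\Hom(T,F[1])\neq0$ because $T$ is silting, so $\theta_T([F])<0$. I would cite \cite{Asai} for this computation if the conventions match.
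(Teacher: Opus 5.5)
Your proposal is correct and has the same skeleton as the paper. You use $\U_T=\bigcap_i\U_{T_i}$ from Theorem \ref{ThmSilt}, then $\U_{T_i}\cap\fmod A=\overline{\T}_{\theta^i}$, then $\bigcap_i\overline{\T}_{\theta^i}=\overline{\T}_\theta$. The difference is how the last two steps are obtained. The paper cites \cite[Proposition 3.3]{Yurikusa} for $\Fac\HH^0(T_i)=\overline{\T}_{[T_i]}$, and Corollary \ref{CorAI} (Asai--Iyama, with the authors' remark that it extends from rational to real vectors) for the intersection. You prove both directly instead.

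Your limit argument is the more illuminating of the two replacements. With the paper's definition of $\overline{\T}_\theta$ as a non-strict inequality over all factor modules, the identity follows at once from two facts. First, $\langle\theta,[N]\rangle\le\langle\theta^i,[N]\rangle$ by nonnegativity of dimension vectors. Second, $\eta\mapsto\langle\eta,[N]\rangle$ is continuous. This removes the real-versus-rational issue entirely. It also shows that monotonicity of the $\theta^i$ is needed only to land in Setup \ref{SiltSet}, not for the intersection.

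Your Euler-form proof of $\Fac\HH^0(T)=\overline{\T}_{[T]}$ is also sound. For a nonzero torsion-free $F$ one has $\Hom(T,F)=\Hom(\HH^0(T),F)=0$, so generation forces $\Hom(T,F[1])\neq 0$. This settles the sign conventions you were worried about. The paper's version is shorter; yours is self-contained.

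One minor imprecision: for arbitrary $X\in\D(A)$ the aisle condition is $\HH^0(X)\in\mathrm{Gen}\,\HH^0(T_i)\subseteq\Mod A$, not membership in $\Fac\HH^0(T_i)\subseteq\fmod A$. This does not affect $\U_T\cap\fmod A$. Nesting of the aisles then follows, as in the paper, from $\HH^0(T_{i+1})\in\Fac\HH^0(T_i)$, i.e.\ $T_{i+1}\in\U_{T_i}$.
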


This has two further applications. If in the situation above $\theta$ is the $g$-vector of a two-term  partial silting complex $U$ in $\K^b(\proj A)$, then $T$ constructed from the sequence $T_i$ via Theorem \ref{ThmSilt} is additively equivalent to the Bongartz completion of $U$ (see Corollary \ref{bongartz}). Finally, if in addition we assume that $A$ is tame, then, combining our results with \cite{PlamondonYurikusa}, we can construct a two-term silting complex $T$ corresponding to any numerical torsion pair $(\overline{\T}_\theta,\F_\theta)$  via the algorithm described in Theorem \ref{ThmSilt}, see Corollary \ref{CorTame}.  

\subsection*{Acknowledgments}
The authors are grateful to Isaac Bird, Mikhail Gorsky, and Georgios Dalezios for many valuable conversations.
RL was partially supported by the Project 2022S97PMY Structures for Quivers, Algebras and Representations (SQUARE) funded by NextGenerationEU under NRRP, Call PRIN 2022 No. 104 of February 2, 2022 of Italian Ministry of University and Research, and by the project LAVIE - Large views of small phenomena: decompositions, localizations, and representation type, FIS 00001706, funded by Program FIS2021 of Italian Ministry of University and Research. RL is a member of the network INdAM-G.N.S.A.G.A. AZ acknowledges the support of the Institute of Mathematics, Czech Academy of Sciences (RVO 67985840), Q100192601 Lumina
quaeruntu, and GA\v{C}R project 26-22734S.

\section{Preliminaries}

\subsection{Notation}

Throughout this paper $A$ will denote a unital associative ring and we will denote by $\Mod A$ the category of right $A$-modules. By $\fmod A$, $\Proj A$, $\Inj A$, $\proj A$, and $\inj A$ we will denote the subcategories of finitely presented, projective, injective, finitely generated projective and finitely generated injective modules, respectively. We will denote the opposite ring by $A^{op}$.

The category of chain complexes of $A$-modules will be denoted by $\Ch (A)$, note that we will use the cohomological notation for chain complexes. The unbounded derived category of all $A$-modules will be denoted by $\D(A)=\D(\Mod A)$. We will use the notation $\K^b(\Proj A)$ and $\K^b(\Inj A)$ for the homotopy categories of bounded complexes of projective and injective modules, $\K^b(\proj A)$ and $\K^b(\inj A)$ will denote the homotopy categories of bounded complexes of finitely generated projective and finitely generated injective modules. 

For an additive category $\A$ and a class of objects $\X \subseteq \A$ we will denote by $\X^{\perp}$ the subcategory with objects $\{Y\in \A \mid \Hom_\A(\X,Y)=0\}$. The subcategory ${}^{\perp}\X$ is defined dually.
If $\A$ has coproducts, $\Add \X$ will denote the subcategory consisting of direct summands of coproducts of objects in $\X$. If $\A$ has products, $\Prod \X$ will denote the subcategory consisting of direct summands of products of objects in $\X$. The subcategory consisting of direct summands of finite coproducts of objects in $\X$ will be denoted by $\add \X$. Note that when we use the term subcategory we mean a subcategory closed under isomorphism. 

Let $\Tt$ be a triangulated category. We will usually denote the shift functor on $\Tt$ by $[1]$. For full subcategories $\U,\V \subseteq \Tt$ we will use the following notation
\[
\U\star \V=\{X\in \Tt \mid \text{ there exists a triangle } U\rightarrow X \rightarrow V \rightarrow \text{ with } U\in \U, V\in \V\}.
\]
For a class of objects $\X \in\Tt$ we will denote by $\tria(\X)$ the smallest triangulated subcategory of $\Tt$ containing $\X$, and by $\thick(\X)$ the smallest thick subcategory containing $\X$, that is the smallest triangulated subcategory of $\Tt$ congaing $\X$ and closed under direct summands. We say that $\X$ \emph{generates} $\Tt$ if $\Hom_{\Tt}(\X,Y[n])=0$ for all $n\in \Zn$ implies $Y=0$. Dually, we say that $\X$ \emph{cogenerates} $\Tt$ if $\Hom_{\Tt}(Y,\X[n])=0$ for all $n\in \Zn$ implies $Y=0$.

\subsection{Hom-orthogonal pairs, t-structures and co-t-structures}

A key tool in the subsequent sections of the paper will be that of a Hom-orthogonal pair of subcategories. In particular, we will make use of t-structures, introduced by Beilinson, Bernstein, Deligne and Gabber \cite{BBDG}, and co-t-structures, introduced by Bondarko \cite{Bon1}, under the name of weight structures, and Pauksztello \cite{Pauksztello}. Let us recall some basic definitions mainly in order to fix the notation. 

\begin{definition}
A pair of full subcategories $(\U,\V)$ of a triangulated category $\Tt$ is 

\begin{itemize}
    \item a \emph{Hom-orthogonal pair} if $\U^{\perp}=\V$ and $\U={}^{\perp}\V$;
    \item a \emph{t-structure} if it is a Hom-orthogonal pair, $\U[1] \subseteq \U$, and $\U\star \V=\Tt$;
    \item a \emph{co-t-structure} if it is a Hom-orthogonal pair, $ \V[1] \subseteq \V$ and $\U\star \V=\Tt$.
\end{itemize}    
\end{definition}

The left hand side $\U$ of a t-structure is called an \emph{aisle} and the right hand side $\V$ is called a \emph{co-aisle}.
The \emph{heart} $\mathcal{H}$ of a t-structure $(\U,\V)$ is the abelian category $\U \cap \V[1]$. The \emph{coheart} of a co-t-structure is the additive category $\mathcal{C}=\U[1]\cap \V$. 

If $(\U,\V)$ is a t-structure, the condition $\U\star \V=\Tt$ implies that for any object $X\in\Tt$ there is a triangle
\[
U\rightarrow X \rightarrow V \rightarrow \text{ with } U\in \U, V\in \V.
\]
This triangle is functorial and the maps $U\rightarrow X$ and $X \rightarrow V$ are a $\U$-cover and a $\V$-envelope for each $X\in\Tt$. The corresponding triangle guaranteed by the definition of a co-t-structure is not functorial and the corresponding maps are only a $\U$-precover and a $\V$-preenvelope.
In the context of this paper it will frequently happen that we consider a pair of a t-structure and a co-t-structure, which share one of the classes. For example, we can consider a triple $(\W,\U,\V)$, where $(\W,\U)$ is a co-t-structure and $(\U,\V)$ is a t-structure. In that case we say that the co-t-structure $(\W,\U)$ is \emph{left adjacent} to the t-structure $(\U,\V)$. The definition of a \emph{right adjacent} co-t-structure is analogous.

\subsection{Silting and cosilting objects}\label{SiltIntro}

We will introduce silting and cosilting objects only in the context of derived categories. For more general notions see \cite{NSZ, PV}. For an object $S\in \D(A)$ we can consider the following subcategories of $\D(A)$:  
\[
\U_S=\{X\in \D(A)\mid \Hom_{\D(A)}(S,X[>0])=0\},
\]
\[
\V_S=\{X\in \D(A)\mid \Hom_{\D(A)}(S,X[\leq 0])=0\}.
\]

\begin{definition}
An object $S$ is called \emph{silting} if the pair $(\U_S,\V_S)$ is a t-structure.      
\end{definition}

 Note that such an object $S$ is necessarily a generator of $\D(A)$. 
Since the triangulated category we are working in is compactly generated, a left adjacent co-t-structure exists for any silting object (see \cite[Corollary 3.10]{AMVsmashing}, \cite[Theorem 3.2.4]{Bondarko}). We will denote this co-t-structure $(\W_S, \U_S)=({}^{\perp}\U_S,\U_S)$. The coheart of this co-t-structure coincides with $\Add(S)$. We will call two silting objects $S,S'$ \emph{equivalent} if they generate the same t-structure, which happens if and only if $\Add{S}=\Add{S'}$. The subcategory $\U_S$ is the smallest subcategory of $\D(A)$ containing $S$ and closed under positive shifts, coproducts and extensions \cite{AJSS}.

We will work with \emph{$n$-silting objects}, that is silting objects in $\D(A)$ isomorphic to a complex of projective $A$-modules concentrated in degrees $[-n+1, 0]$. By \cite[Proposition 4.2]{AMVSiltMod}, such a complex $S$ is silting exactly when the following conditions hold:
\begin{enumerate}
    \item $\Hom_{\D(A)}(S,S^{(J)}[>0])=0$ for any set $J$,
    \item $\tria(\Add(S))=\K^b(\Proj A)$.
\end{enumerate}

A canonical example of a silting object is the stalk complex $A$ concentrated in degree $0$. In this case one gets the \emph{standard t-structure} $(\U_A, \V_A)=(\D^{\leq 0},\D^{> 0})$. Here $\D^{\leq 0}$ denotes the subcategory of $\D(A)$ consisting of objects with zero cohomology in positive degrees and $\D^{> 0}$ denotes the subcategory of $\D(A)$ consisting of objects with zero cohomology in degrees $\leq 0$. For the shifts of the aisle and the coaisle of the standard t-structure we will often use the notation $\D^{\leq -n}=\D^{\leq 0}[n]$ and $\D^{> -n}=\D^{> 0}[n]$. 

From degree considerations for an $n$-silting object $S$ we get the following inclusions: 
\[
\D^{\leq -n+1}\subseteq \U_S \subseteq \D^{\leq 0}.
\]

A t-structure $(\U,\V)$ for which the inclusions $\D^{\leq -n+1}\subseteq \U \subseteq \D^{\leq 0}$ hold will be called \emph{$n$-intermediate.}
Let $S$ be an $n$-silting object. We can consider a sequence of triangles that gradually build $A$ from $\Add(S)$:
\[
S_{i+1}'[-1] \rightarrow S_i' \rightarrow S_{i+1} \rightarrow S_{i+1}',
\]
where the triangles are the approximation triangles associated to the co-t-structure  $(\W_S, \U_S)$ and
\begin{enumerate}
    \item $S_0'=A$,
    \item $S_i\in \Add(S)$,
    \item $S_i' \in \W_S[1]\cap \U_S[1-n+i]$,
    \item $S_n=S_{n-1}'$, $S_{n}'=0$.
\end{enumerate}

From this, taking the compositions $S_i\rightarrow S_{i}'\rightarrow  S_{i+1}$ one can construct a sequence of morphisms 
\begin{equation}\label{SiltSeq}
 A \xrightarrow{g_1} S_1 \xrightarrow{g_2} S_2 \xrightarrow{g_3} \dots \xrightarrow{g_n} S_n   
\end{equation}

with each $S_i\in \Add(S)$.
Moreover, we have that $T:= \oplus_{i=1}^n S_i$ is an $n$-silting object equivalent to $S$. Indeed, this follows by \cite[Theorem 1 (2)]{NSZ}, since $T$ is a generator.

Dually, for an object $C\in \D(A)$ we can consider the following two subcategories: 
\[
\U^C=\{X\in\D(A)\mid \Hom_{\D(A)}(X,C[\leq 0])=0\},
\]
\[
\V^C=\{X\in\D(A)\mid \Hom_{\D(A)}(X,C[> 0])=0\}.
\]

\begin{definition}
An object $C$ in a is called \emph{cosilting} if the pair $(\U^C,\V^C)$ is a t-structure.      
\end{definition}

In this case the object $C$ is necessarily a cogenerator. The subcategory $\V^C$ is the smallest co-aisle containing $C$ \cite[Proposition 4.9]{PV}. Two cosilting objects $C$ and $C'$ are \emph{equivalent} when $(\V^C,\W^C)=(\V^{C'},\W^{C'})$ or equivalently when $\Prod(C)=\Prod(C')$.

We will work with \emph{$n$-cosilting objects}, that is cosilting objects isomorphic to a complex of injective $A$-modules concentrated in degrees $[0, n-1]$. Such a complex $C$ is cosilting if and only if the following two conditions hold
\begin{enumerate}
    \item $\Hom_{\D(A)}(C^J,C[>0])=0$ for any set $J$,
    \item $\tria(\Prod(C))=\K^b(\Inj A)$. 
\end{enumerate}

We include a short proof of this fact for the convenience of the reader. The if implication can be found in \cite[Proposition 3.10]{MVSiltCosilt}, as $\tria(\Prod(C))=\K^b(\Inj A)$ implies $\thick(\Prod(C))=\K^b(\Inj A)$. The only if implication follows from the fact that $C\in \V^C$ and $\V^C$ is closed under products as a coaisle. The second condition follows from the fact that the t-structure $(\U^C,\V^C)$ is intermediate and the shift of the standard t-structure $(\D^{\leq -1},\D^{> -1})$ corresponds to the injective cogenerator $E$ of $\Mod A$ considered as a cosilting object. So, analogously to the silting case, one can construct $E$ and any injective module inductively from $\Prod(C)$ as we describe below.

Let $(\U^C,\V^C)$ be a t-structure associated to an $n$-cosilting object $C$ in $\D(A)$.
The right adjacent co-t-structure to $(\U^C,\V^C)$ exists and we will denote it by $(\V^C,\W^C)=(\V^C,(\V^C)^{\perp})$ (see \cite[Corollary 3.2.6]{Bondarko}, \cite[Theorem 3.13]{MVSiltCosilt}). The negative shift of the coheart of this co-t-structure coincides with $\Prod(C)$, i.e., $(\V^C[1]) \cap \W^C = \Prod(C)[1]$. 

For an $n$-cosilting object $C$, we get 
\[
\D^{\geq n-1}\subseteq \V^C \subseteq \D^{\geq 0}.
\]

As in the silting case let us construct $E$ from $\Prod(C)$. For each $0 \leq i \leq n-1$, consider the approximation triangle of $C_i'$ corresponding to the co-t-structure $(\V^C,\W^C)$:
\[ C_{i+1} \rightarrow C_i' \rightarrow C_{i+1}'[1] \rightarrow C_{i+1}[1],
\] 
with
\begin{enumerate}
\item $C_0' := E$,
\item $C_i \in \Prod(C)$,
\item $C_{i+1}' \in \V^C[n-i-2] \cap \W^C[-1]$,
\item $C_n = C_{n-1}'$, $C_{n}'=0$.
\end{enumerate}
We have a sequence of morphisms:
\[
C_n \overset{f_n}{\longrightarrow} C_{n-1} \overset{f_{n-1}}{\longrightarrow} \dots \overset{f_3}{\longrightarrow} C_2 \overset{f_2}{\longrightarrow}C_1 \overset{f_1}{\longrightarrow} E
\] with $C_i \in \Prod(C)$ where the maps are obtained as compositions $C_{i+1}\rightarrow C'_{i}\rightarrow C_{i}$. The object  $Q:=\oplus_1^n C_{i}$ is a cosilting object by \cite[Proposition 3.10]{MVSiltCosilt} since it is an $n$-term complex of injectives. Since $Q\in \Prod(C)$, we get that $Q$ and $C$ are equivalent cosilting objects.

\subsection{Cotorsion pairs in exact categories}
By \cite{SS}, t-structures and co-t-structures in $\D(A)$ can be described via cotorsion pairs in an exact Frobenius category whose stable category is equivalent to $\D(A)$. Let us recall this correspondence and some definitions related to cotorsion pairs which will be used later.
We refer the reader to \cite{buhler} for details on exact categories. 

Let $\Ee$ be an exact category.  We will denote admissible short exact sequences in $\Ee$ as short exact sequences, $\Ext^1_{\Ee}(X,Y)$ will denote the corresponding Ext-groups.
Let $\X$ be a class of objects in $\Ee$, we will use the standard notation for its Ext-orthogonal subcategories:
\[\X^{\perp_1}=\{Y\in \Ee \mid \Ext^1_{\Ee}(\X,Y)=0 \},\] \[{}^{\perp_1}\X=\{Y\in \Ee \mid \Ext^1_{\Ee}(Y,\X)=0 \}.\]  

We call a pair $(\X, \Y)$ of full subcategories of $\Ee$ a \emph{cotorsion pair} if $\X^{\perp_1}=\Y$ and $\X={}^{\perp_1}\Y$. A cotorsion
pair $(\X, \Y)$ is called \emph{complete} if for any $M \in \Ee$ we have two admissible short exact sequences
\[
0\rightarrow Y\rightarrow X \rightarrow M \rightarrow 0
\]
\[
0\rightarrow M\rightarrow Y' \rightarrow X' \rightarrow 0
\]
where $X,X'\in \X$ and $Y,Y'\in \Y$. The admissible epi $X \rightarrow M$ is
 an $\X$-precover and the admissible mono $M \rightarrow  Y'$
is a $\Y$-preenvelope. The two sequences whose existence is guaranteed by the definition of the complete cotorsion pair are called \emph{approximation sequences}. 

For a class of objects $\X$ an $\X$-precover $X\rightarrow M$ of $M$ is called \emph{special} if it fits into an admissible short exact sequence $0\rightarrow Y\rightarrow X \rightarrow M \rightarrow 0$ with $Y\in \X^{\perp_1}$. Dually for a class of objects $\Y$ a $\Y$-preenvelope $M\rightarrow Y$ of $M$ is called \emph{special} if it fits into an admissible short exact sequence $0\rightarrow M\rightarrow Y \rightarrow X \rightarrow 0$ with $X\in {}^{\perp_1}\Y$. As we see complete cotorsion pairs guarantee the existence of special $\X$-precovers and $\Y$-preenvelopes. 

A class  $\X\subseteq \E$ is called \emph{projectively
resolving} if $\X$ contains all projective objects of $\E$, $\X$ is closed under
extensions, and $\X$ is closed under taking kernels of admissible epis. \emph{Injectively coresolving} classes are defined dually. A cotorsion pair $(\X,\Y)$ is called \emph{hereditary} if $\X$ is projectively resolving and $\Y$ is injectively coresolving.

In case $\Ee$ is a Frobenius exact category we will denote by $\Omega$ the syzygy in $\Ee$ which descends to the inverse of the shift functor on the stable category $\underline{\Ee}$. 
In this case a cotorsion pair $(\X,\Y)$ is hereditary if and only if $\Omega \X \subseteq \X$. For a subcategory $\X$ of $\Ee$ we will denote $\underline{\X}$ the essential image of $\X$ under the additive quotient functor from $\Ee$ to $\underline{\Ee}$. We will use the aforementioned correspondence from \cite{SS} only for the case of co-t-structures.

\begin{proposition}\cite[Proposition 3.16]{SS}
Let $\Ee$ be a Frobenius exact category. The assignment $(\F, \T ) \mapsto (\underline{\F}[-1] ,\underline{\T})$ gives a bijective correspondence between complete hereditary cotorsion pairs in $\Ee$ and co-t-structures in $\underline{\Ee}$.   
\end{proposition}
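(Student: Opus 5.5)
Since $\Ee$ is Frobenius, the stable category $\underline{\Ee}$ is triangulated: the shift is the cosyzygy $\Omega^{-1}$, triangles come from admissible short exact sequences, and $\underline{\Ee}(X,Y[1])\cong \Ext^1_{\Ee}(X,Y)$. The key observation is this. For an admissible short exact sequence $0\to Y\to X\to M\to 0$ we get a triangle $Y\to X\to M\to Y[1]$ in $\underline{\Ee}$. Moreover, $\Ext^1_\Ee$ vanishing is the same as $\underline{\Ee}(-,-[1])$ vanishing. Hence $\Ext^1_\Ee(\F,\T)=0$ translates into $\underline{\Ee}(\underline{\F}[-1],\underline{\T})=0$. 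I would organise the proof by checking, in turn, the three axioms of a co-t-structure for $(\underline{\F}[-1],\underline{\T})$, then constructing the inverse map and verifying the two composites are identities.

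Step 1 (orthogonality). Both $\F$ and $\T$ contain the projective-injectives, since $(\F,\T)$ is a cotorsion pair in a Frobenius category. Both are closed under isomorphism in $\underline{\Ee}$: if $X\oplus P\cong X'\oplus P'$ with $P,P'$ projective, then $X'\in\F$ because $\F$ is closed under summands and sums. Using this, $\T=\F^{\perp_1}$ translates into $\underline{\T}=(\underline{\F}[-1])^{\perp}$, and ${}^{\perp_1}\T=\F$ gives the dual equality.

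Step 2 (shift closure). Hereditary means $\Omega\F\subseteq\F$, equivalently $\Omega^{-1}\T\subseteq\T$ by the dual coresolving property. This gives $\underline{\T}[1]\subseteq\underline{\T}$, which is the co-t-structure shift condition.

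Step 3 (decomposition). Completeness gives $0\to T\to F\to M\to 0$, hence a triangle $F\to M\to T[1]\to$. I need to be careful here: this places $M$ in $\underline{\F}\star\underline{\T}[1]$, not yet in the required form. Instead I use the other approximation sequence $0\to M\to T'\to F'\to 0$, which yields a triangle $F'[-1]\to M\to T'\to F'$. This shows $M\in \underline{\F}[-1]\star\underline{\T}$, exactly as needed.

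Inverse construction. Given a co-t-structure $(\U,\V)$ on $\underline{\Ee}$, set $\F=\pi^{-1}(\U[1])$ and $\T=\pi^{-1}(\V)$, the full preimages under the quotient functor. Reversing the translations above shows that this is a cotorsion pair, that it is hereditary (using $\V[1]\subseteq\V$, hence $\U[-1]\subseteq\U$ and $\Omega\F\subseteq\F$), and that it is complete. The completeness check is the main obstacle: a triangle $U\to M\to V\to U[1]$ in $\underline{\Ee}$ must be lifted to an admissible short exact sequence $0\to M\to V'\to F'\to 0$ in $\Ee$ with $V'\cong V$ stably. I would handle this by replacing $M\to V$ with $M\to V\oplus I(M)$, where $I(M)$ is an injective hull; this map is an admissible mono whose cokernel is stably isomorphic to $U[1]$, so it lies in $\F$. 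For the other approximation sequence I would apply the dual trick with a projective cover.

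Finally, the two assignments are mutually inverse. The full preimage of $\underline{\F}$ is $\F$ itself, because $\F$ is closed under stable isomorphism (Step 1). In the other direction, $\underline{\pi^{-1}(\mathcal{S})}=\mathcal{S}$ for any subcategory $\mathcal{S}$, since subcategories are assumed to be closed under isomorphism.
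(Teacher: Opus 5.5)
Your proof is correct and follows the route the paper indicates for this result, which it cites from \cite{SS} without proof: the isomorphism $\Ext^1_{\Ee}(X,Y)\cong\underline{\Ee}(X,Y[1])$ turns the cotorsion-pair, hereditary and completeness conditions into Hom-orthogonality, the shift condition $\underline{\T}[1]\subseteq\underline{\T}$ and the decomposition $\underline{\Ee}=\underline{\F}[-1]\star\underline{\T}$; the inverse is given by full preimages under the quotient functor, and the approximation sequence $0\to M\to T\to F\to 0$ corresponds to the triangle $F[-1]\to M\to T\to F$. Two small points need tidying: in a general Frobenius category use any admissible mono $M\to I$ into a projective-injective (resp. admissible epi $P\to M$) instead of a hull (resp. cover), and for the second approximation sequence apply the decomposition to $M[-1]$ rather than to $M$ (or just invoke Salce's lemma).
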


Starting from a co-t-structure in $\underline{\Ee}$, the classes of the corresponding cotorsion pair in $\Ee$ are given by the preimages of the corresponding subcategories of the co-t-structure under the quotient functor.  Note that under the assignment above the cotorsion approximation sequence 
\[0\rightarrow X\rightarrow T \rightarrow F \rightarrow 0\]
which exists for any $X\in \Ee$ becomes the approximation triangle of $X$
\[F[-1]\rightarrow X\rightarrow T \rightarrow F  \]
with respect to the co-t-structure $(\underline{\F}[-1] ,\underline{\T})$.

\subsection{Model structures on chain complexes}
Throughout this paper we will be concerned with homotopy limits and colimits in $\D(A)$. To describe them we will use the language of model categories.
For a ring $A$, there are multiple examples of abelian model structures on the category $\Ch (A)$ of chain complexes over $A$ whose homotopy category is the derived category $\D(A)$. In particular, the projective model structure on $\Ch (A)$  and the injective model structure on $\Ch (A)$ are of that type. For details on model categories see \cite{hovey, gillespie}. We summarise the main properties that we will use below.

An \emph{abelian model structures} on a complete and cocomplete abelian category can be given by a \emph{Hovey triple} $(Q,W,R)$. That is a triple of classes of objects such that $(Q\cap W,R)$ and $(Q,W\cap R)$ are complete cotorsion pairs and $W$ satisfies 2 out of 3 property on short exact sequences. Objects in $Q$ are called \emph{cofibrant}, objects in $R$ are called \emph{fibrant}, and objects in $W$ are called \emph{trivial}. The \emph{cofibrations} are defined to be monomorphisms with cokernel in $Q$ and the \emph{fibrations} are defined to be epimorphisms with kernel in $R$. The objects in $Q\cap R$ are \emph{bifibrant}. An abelian model structure is called \emph{hereditary} if both cotorsion pairs $(Q\cap W,R)$ and $(Q,W\cap R)$ are hereditary. For a hereditary abelian model structure the subcategory of bifibrant objects $Q\cap R$ is an exact Frobenius category whose stable category is equivalent to the homotopy category of the model category via the fibrant-cofibrant replacement.

In later sections we will be working with an abstract hereditary abelian model structure, but will always have the projective and the injective model structures in mind as our main examples. The projective model structure is given by the Hovey triple $(\dgP, \Ac , \All)$, where $\All$ denotes the class of all chain complexes of $A$ modules, $\Ac$ denotes the class of acyclic complexes, and $\dgP$ denotes the class of \emph{DG-projective complexes}, that is complexes $P$ such that each term is projective and any chain map from $P$ to an acyclic complex is null homotopic. In particular, any bounded above complex of projectives is DG-projective. All chain complexes are fibrant in this model structure and DG-projective complexes are cofibrant. 

Dually, the injective model structure is given by the Hovey triple $(\All, \Ac , \dgI)$, where $\dgI$ denotes the class of \emph{DG-injective complexes}, that is complexes $I$ such that each term is injective and any chain map to $I$ from an acyclic complex is null homotopic. In particular, any bounded below complex of injectives is DG-injective. For this model structure all chain complexes are cofibrant and DG-injective complexes are fibrant.

Both the projective and the injective model structures are hereditary. So, the derived category $\D(A)$ is equivalent to the stable category of the exact Frobenius category $\dgP$, in which projective-injective objects are the acyclic DG-projective complexes. Dually, $\D(A)$ is equivalent to the stable category of the exact Frobenius category $\dgI$, in which the projective-injective objects are the acyclic DG-injective complexes.

\subsection{Homotopy limits and colimits}\label{SecHomLimColim}

Let $I$ be a small category. We will usually consider the case when $I$ is the category $\mathbb{N}$, $\mathbb{N}^{op}$ or it is given by another ordinal.
Let 
\[ \Ch(A) \xrightarrow{\Delta_I}  \Ch( A)^{I}\]
be the constant diagram functor from the category of chain complexes to the category of $I$-shaped diagrams in $\Ch(A)$ (which is an abelian category). The left adjoint to this  functor is the colimit and the right adjoint is the limit.

The derived category $\D(\Mod A^{I})$ of the abelian category $\Mod A^{I}$ of $I$-shaped diagrams in $\Mod A$ can be constructed as the localisation of the category $\Ch( A)^{I}$ with respect to quasi-isomorphisms, which are defined objectwise. The functor $\Delta_I$ induces a functor on the level of derived categories that sends an object $X$ in  $\D(A)$ to the constant diagram of shape $I$ in $\D(\Mod A^{I})$, consisting of the copies of $X$ and identity morphisms. 

 The \emph{homotopy colimit} is the left derived functor of the colimit and the \emph{homotopy limit} is the right derived functor of the limit. We will use $\hclim_I$ to denote homotopy colimit, and $\hlim_I$ to denote the homotopy limit. Since directed colimits are exact, $\hclim_I$ can be computed as colimit in the category of chain complexes in case $I$ is directed, which is the only case we will consider. 

The functor $\hlim_I$ is a little bit more involved.
Since it is the right derived functor of the limit functor, in order to compute it we will need the description of fibrant objects in $\Ch(A)^I$ for some model structure giving $\D(\Mod A^{I})$ as its homotopy category. This will be applied only to diagrams of shape $I=\mu^{op}$, for an ordinal $\mu$, so it will suffice to use the description of fibrant objects for inverse categories $I$ in the sense of \cite[Definition 5.1.1]{hovey}. For a fibrant object in $\Ch(A)^I$ the functor $\hlim_I$ is then computed as limit in $\Ch(A)$.

\begin{theorem}\cite[Theorem 5.1.3]{hovey}\label{Thm: diag model structure} Given a model category $C$ and an inverse category $I$, we have a model
structure on $C^I$ where the weak equivalences and cofibrations are the objectwise
ones, and a map $f : X \rightarrow Y$ is a (trivial) fibration if and only if the induced map
$X_i \rightarrow Y_i \times_{M_iY} M_iX$ is a (trivial) fibration for all $i\in I$.
\end{theorem}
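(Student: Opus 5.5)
The plan is to treat an inverse category $I$ as one equipped with a degree function $d\colon \mathrm{Ob}(I)\to\lambda$ into an ordinal, such that every non-identity morphism $i\to j$ strictly lowers degree. For $X\in C^I$ the matching object is
\[
M_iX=\lim_{(i\to j)\in\partial(i\downarrow I)} X_j,
\]
the limit over the category of non-identity arrows out of $i$. This limit only involves objects of strictly smaller degree. I would construct everything by transfinite induction on degree, in the same way Reedy model structures are built (in fact $C^I$ with these classes is the Reedy structure for $I$ with trivial direct part). Limits and colimits in $C^I$ are objectwise. Weak equivalences are objectwise, so two-out-of-three and closure under retracts are immediate. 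Cofibrations are objectwise, and the relative matching maps $X_i\to Y_i\times_{M_iY}M_iX$ are natural, so fibrations are also closed under retracts.

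\emph{Factorizations.} Given $f\colon X\to Y$, I build $X\to Z\to Y$ one degree at a time. Suppose $Z_j$ and the maps have been defined, compatibly with the diagram, for all $j$ of degree $<d(i)$. Then $M_iZ$ is defined and there is a map $X_i\to Y_i\times_{M_iY}M_iZ$. I factor this map in $C$ as a cofibration (respectively a trivial cofibration) followed by a trivial fibration (respectively a fibration) through an object $Z_i$. The structure maps $Z_i\to Z_j$ come from the projection to $M_iZ$. The result is that $X\to Z$ is an objectwise (trivial) cofibration and $Z\to Y$ has (trivial) fibrations as relative matching maps. Objects of equal degree do not interact, so each degree step can be done for all such $i$ at once, and limit ordinals need nothing.

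\emph{Key lemma (main obstacle).} I must show two things. First, if every relative matching map of $f$ is a fibration (respectively a trivial fibration), then for each $i$ the map $M_iX\to M_iY$ is a fibration (respectively a trivial fibration), and so, by pullback and composition, $X_i\to Y_i$ is one too. Second, $f$ is a Reedy fibration and an objectwise weak equivalence exactly when all its relative matching maps are trivial fibrations. For the first statement, I would filter the matching category $\partial(i\downarrow I)$, which is again inverse, by degree. This exhibits $M_iX\to M_iY$ as a transfinite composite of pullbacks of products of relative matching maps at lower degrees, and such composites of (trivial) fibrations are (trivial) fibrations. For the second statement, the direction "relative matching maps trivial fibrations $\Rightarrow$ fibration and weak equivalence" follows from the first statement. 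For the converse I would argue by induction on degree. The inductive hypothesis makes $M_iX\to M_iY$ a trivial fibration, hence its pullback $Y_i\times_{M_iY}M_iX\to Y_i$ is one too. Since $X_i\to Y_i$ is a weak equivalence, two-out-of-three makes the relative matching map a weak equivalence, and it is a fibration by assumption.

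\emph{Lifting.} Consider a square with $A\to B$ an objectwise cofibration and $X\to Y$ with trivial-fibration relative matching maps, or with $A\to B$ an objectwise trivial cofibration and $X\to Y$ a Reedy fibration. I construct the lift $B_i\to X_i$ by induction on degree. The lifts already built at lower degrees give $B_i\to M_iX$, which combines with $B_i\to Y_i$ to give a map to $Y_i\times_{M_iY}M_iX$. Lifting $A_i\to B_i$ against the relative matching map in $C$ gives $B_i\to X_i$ compatible with everything. By the key lemma, the first case covers lifting cofibrations against trivial fibrations in $C^I$. With the factorizations, this establishes all model category axioms; the characterization of (trivial) fibrations stated in the theorem then follows from the retract argument.
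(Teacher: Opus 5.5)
The paper gives no proof of this statement; it cites it from Hovey. Your argument is correct and is essentially the standard Reedy-style proof given there: degree-by-degree induction for the factorizations and the lifts, plus the lemma that relative matching maps which are (trivial) fibrations make $M_iX\to M_iY$ a (trivial) fibration, which by two-out-of-three identifies the trivial fibrations. Two details are worth stating explicitly: your factorizations are functorial once you use the functorial factorizations of $C$ (Hovey's definition of a model category requires this), and the ``transfinite composite'' in your key lemma is a continuous tower whose limit is a (trivial) fibration because these classes are characterized by a right lifting property.
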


Here $M_i$, the \emph{matching space functor}, is defined as the composite
\[M_i : C^I \rightarrow C^{(I,i)} \xrightarrow{lim} C,\]
where $(I,i)$ is the category whose objects are all non-identity maps $i\rightarrow j$ in $I$ and whose morphisms are commutative triangles, and $C^I \rightarrow C^{(I,i)}$ is the restriction. We have a natural transformation $X_i \rightarrow M_iX$.

For any ordinal $\mu$, we consider the category $\mathbb{\mu}^{op}$ with objects given by ordinals $\alpha < \mu$ such that there is a unique arrow $\alpha \rightarrow \beta$ whenever $\alpha \geq \beta$. 
We get that 
\[
(\mathbb{\mu}^{op},\alpha)= \alpha^{op}
\]
and the functor $M_\alpha$ sends a diagram $X$ in $C^{\mathbb{\mu}^{op}}$ to $X_{\alpha-1}$ if $\alpha$ is a successor ordinal and to $\lim_{\beta< \alpha} X_\beta$ if $\alpha$ is a limit ordinal. 
The natural transformation $X_\alpha \rightarrow M_\alpha X$ is given by the arrow in the diagram $X_\alpha \rightarrow X_{\alpha-1}$ if $\alpha$ is a successor ordinal and the limit morphism $X_\alpha \to \lim_{\beta< \alpha} X_\beta$ if $\alpha$ is a limit ordinal.

The fibrant objects are determined by the fibrations; indeed they are the objects $X$ such that canonical map to the zero object is a fibration. We can therefore use Theorem \ref{Thm: diag model structure} to identify the fibrant objects. In our setting, the zero object $Z=0 \in C^{\mathbb{\mu}^{op}}$ is given by $Z_\alpha=0$ for all $\alpha< \mu$. For successor ordinals $\alpha<\mu$ up to isomorphism we get that the following map is a fibration
\[(X_\alpha \rightarrow Z_\alpha \times_{M_\alpha Z} M_\alpha X)=(X_\alpha \rightarrow 0 \times_{0} X_{\alpha-1})=(X_\alpha \rightarrow X_{\alpha-1}).\]
For a limit ordinal $\alpha <\mu$ the following map is a fibration
\[(X_\alpha \rightarrow Z_\alpha \times_{M_\alpha Z} M_\alpha X)=(X_\alpha \rightarrow 0 \times_{0} \lim_{\beta< \alpha} X_\beta)=(X_\alpha \rightarrow \lim_{\beta< \alpha} X_\beta)\]
where the last morphism is the limit morphism. 
Additionally, if one checks the condition for $i=0$, we get that $X_0$ is fibrant, since the map $X_0\rightarrow 0$ must be a fibration. So the fibrant objects in $C^{\mu^{op}}$ are the diagrams of shape $\mu^{op}$ such that 
\begin{enumerate}
 \item $X_0$ is fibrant; 
\item each map $X_\alpha \rightarrow X_{\alpha-1}$ for $\alpha$ successor ordinal is a fibration; and 
\item each map $X_\alpha \rightarrow \lim_{\beta< \alpha} X_\beta$ for $\alpha$ a limit ordinal is a fibration.
\end{enumerate} 
Later we will be interested in $\mu^{op}$-shaped diagrams $X \in C^{\mu^{op}}$ that are \emph{continuous inverse systems}, that is, for each limit ordinal $\alpha<\mu$, the limit $\lim_{\beta< \alpha} X_\beta$ exists 
and the limit morphism
$X_\alpha \rightarrow \lim_{\beta< \alpha} X_\beta$ is an isomorphism. For diagrams of this kind, the third condition guaranteeing that $X$ is fibrant is satisfied automatically.

 Since short exact sequences of chain complexes give triangles in the derived category of any abelian category and left and right total derived functors are exact we have that, if 
 \[0\rightarrow A\rightarrow B \rightarrow C\rightarrow 0\]
 is a short exact sequence in $\Ch(\Mod A^{I})\simeq \Ch(A)^{I}$, then 
 \[\hclim_I A \longrightarrow \hclim_I B \longrightarrow \hclim_I C \longrightarrow (\hclim_I A)[1]\]
 and  \[\hlim_I A \longrightarrow \hlim_I B \longrightarrow \hlim_I C \longrightarrow (\hlim_I A)[1]\]
 are triangles in $\D(A)$.

\section{Countable colimits of silting objects}\label{SecSilt}

In this section we will consider countable homotopy colimits of $n$-silting objects corresponding to a sequence of nested aisles in $\D(A)$ for a ring $A$.
We will work in the following setting: $(Q,W,R)$ is a hereditary abelian model structure on $\Ch(A)$ whose homotopy category is $\D(A)$. The category of bifibrant objects $Q\cap R =\E$ is exact Frobenius with the stable category $\underline{\E}$ equivalent to $\D(A)$. One particular case where computations are easier is the projective model structure with $\E=\dgP$ as $n$-silting objects are isomorphic to bounded complexes of projective modules. 

We will use the following notation: for an $n$-silting object $S$ in $\D(A)$, we denote the cotorsion pair in $\E$ associated to the co-t-structure $(\W_S, \U_S)$ via the bijection from \cite[Proposition 3.16]{SS} by $(\F_S, \T_S)$. In particular, $\W_S=\underline{\F}_S[-1]$ and $\U_S=\underline{\T}_S$. 
For a sequence of $n$-silting objects $T_0, T_1, T_2, \dots \in \D(A)$ we will denote the co-t-structures $(\W_{T_i}, \U_{T_i})$ by $(\W_i, \U_i)$ and the cotorsion pairs $(\F_{T_i}, \T_{T_i})$ by $(\F_i, \T_i)$. 

The aim of this section is to explicitly construct an $n$-silting object corresponding to the t-structure whose aisle is the intersection of a countable sequence of decreasing aisles corresponding to $n$-silting objects $T_0, T_1, T_2, \dots \in \D(A)$. So we will work in the following setup:

\begin{setup}\label{SiltSet}
    Let $T_0, T_1, T_2, \dots \in \D(A)$ be a sequence of $n$-silting objects such that  $T_{i+1}\in \U_i$ for $i\geq 0$. The last condition is equivalent to $\U_{i+1}\subseteq \U_i$, since $\U_{i+1}$ is the smallest aisle in $\D(A)$ containing $T_{i+1}$.
\end{setup}

For the rest of the section, by abuse of notation, $A$ denotes a fixed fibrant-cofibrant replacement of $A$ in the model structure $(Q, W, R)$.

\begin{proposition}\label{SiltApp}
Suppose $S$ and $T$ are $n$-silting objects in $\D(A)$ such that $T \in \U_S$.   
Then there exists a commutative diagram with exact rows in $\E$

\[\xymatrix{0 \ar[r] & A \ar[r] \ar@{=}[d] & S_1 \ar[r] \ar[d] & S_2 \ar[r] \ar[d] & \dots \ar[r] & S_n \ar[r] \ar[d] & 0  \\
0 \ar[r] & A \ar[r]  & T_1 \ar[r]  & T_2 \ar[r]  & \dots \ar[r] & T_n \ar[r] & 0}\] 

such that, for every $1\leq i\leq n$, we have $S_i\in \F_S\cap\T_S$, $T_i\in\F_T\cap \T_T$ and the vertical arrows are special $\T_T$-preenvelopes.    
\end{proposition}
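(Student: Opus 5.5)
We build both rows inductively in $\E$ by successive special preenvelopes with respect to the complete hereditary cotorsion pairs $(\F_S,\T_S)$ and $(\F_T,\T_T)$. These are the Frobenius lifts of the sequences of approximation triangles from Section \ref{SiltIntro}. For the top row, set $S_0':=A$ and take an approximation sequence $0\to S_{i}'\to S_{i+1}\to S_{i+1}'\to 0$ with $S_{i+1}\in\T_S$ and $S_{i+1}'\in\F_S$. Splicing these gives an exact complex $0\to A\to S_1\to\dots$.

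We first check that each $S_i$ lies in $\F_S\cap\T_S$ and that the process stops at step $n$. Since $A\in \underline{\F}_S$ up to shift and $\F_S$ is closed under extensions, $S_{i+1}\in\F_S$ whenever $S_i'\in\F_S$. Here we use that $A\in\W_S[1]$, which holds because $\Hom(A[-1],\U_S)=0$ as $\U_S\subseteq\D^{\leq 0}$. The degree estimates, namely $S_i'\in\W_S[1]\cap\U_S[1-n+i]$ after passing to $\underline{\E}$, show that $S_{n-1}'$ already lies in $\T_S\cap\F_S$. So we may take $S_n:=S_{n-1}'$ and obtain the $n$-term exact row.

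For the bottom row and the vertical maps, we again proceed inductively. Start with $\mathrm{id}_A$. Given $S_i'\to T_i'$, extend it using the pushout of $0\to S_i'\to S_{i+1}\to S_{i+1}'\to0$ along $S_i'\to T_i'$, and then take a special $\T_T$-preenvelope of the pushout object. Concretely, form the pushout $P_{i+1}$, which sits in $0\to T_i'\to P_{i+1}\to S_{i+1}'\to0$. Then choose a special $\T_T$-preenvelope $0\to P_{i+1}\to T_{i+1}\to F\to0$ with $F\in\F_T$. Set $T_{i+1}'$ to be the cokernel of the composite $T_i'\to T_{i+1}$. This cokernel is an extension of $S_{i+1}'$ and $F$, and the map $S_{i+1}\to P_{i+1}\to T_{i+1}$ is the vertical arrow.

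The main obstacle is to show three things at once: that the vertical arrows $S_{i+1}\to T_{i+1}$ are special $\T_T$-preenvelopes, that $T_{i+1}'\in\F_T$, and that $T_{i+1}\in\F_T$. The key input is the hypothesis $T\in\U_S$, which gives $\U_T\subseteq\U_S$. Hence $\T_T\subseteq\T_S$ and $\F_S\subseteq\F_T$. Therefore $S_{i+1}'\in\F_S\subseteq\F_T$, and so $T_{i+1}'$, being an extension of objects of $\F_T$, lies in $\F_T$. Likewise $T_{i+1}\in\F_T$ by induction, using the sequence $0\to T_i'\to T_{i+1}\to T_{i+1}'\to0$.

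For speciality, the cokernel of $S_{i+1}\to T_{i+1}$ is an extension of $F$ and of the cokernel of $S_{i+1}\to P_{i+1}$. The latter is isomorphic to $\mathrm{coker}(S_i'\to T_i')$, which lies in $\F_T$ by induction; at the base step it is $0$. A snake-lemma argument in the exact category verifies that these maps are admissible monics. Commutativity of the squares holds by construction.

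Finally, we must check termination. By degree considerations for $\T_T$, as in the top row, $T_{n-1}'$ lies in $\F_T\cap\T_T$ in $\underline{\E}$. We can then set $T_n:=T_{n-1}'$, with $S_n\to T_n$ the induced map. Alternatively, at the last step we replace the preenvelope with one whose cokernel term vanishes, adjusting by projective-injectives of $\E$ if necessary. This last bookkeeping, ensuring exactly $n$ terms while keeping $S_n\to T_n$ a special preenvelope, is where care is needed. I would handle it via the $n$-intermediacy $\D^{\leq -n+1}\subseteq\U_T$, which gives $T_{n-1}'\in\T_T$.
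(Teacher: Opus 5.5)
Your proposal is correct, but it builds the bottom row differently from the paper.

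\textbf{The paper's route.} The paper first constructs a special $(\F_T,\T_T)$-approximation row $0\to A\to U_1\to\dots\to U_n\to 0$ independently of the top row. It gets comparison maps $s_i$ because the top-row maps are $\T_S$-preenvelopes and $\T_T\subseteq\T_S$. It then adds the summands $\overline{U_i}$, which are special $\T_T$-preenvelopes of $S_i'$ and lie in $\F_T\cap\T_T$ since $\F_S\subseteq\F_T$; this makes the vertical maps $t_i$ admissible monics. Finally it proves speciality by a diagram chase: $\Hom_{\E}(T_i,U)\to\Hom_{\E}(S_i,U)$ is surjective for $U\in\T_T$, so $\Ext^1_{\E}(\Coker t_i,\T_T)=0$.

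\textbf{Your route.} You generate the bottom row from the top one: push out along $S_i'\to T_i'$, then take a special $\T_T$-preenvelope of the pushout. Everything then reduces to three exact-category facts plus extension-closure of $\F_T$:
\begin{itemize}
\item the pushout of the admissible monic $S_i'\to T_i'$ along $\lambda_i$ is an admissible monic with the same cokernel;
\item composites of admissible monics are admissible monics;
\item the Noether isomorphism holds for composable admissible monics.
\end{itemize}
The hypothesis $T\in\U_S$ enters only through $S_{i+1}'\in\F_S\subseteq\F_T$.

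\textbf{Comparison.} Your approach avoids the matrices and the Hom-lifting check. It also makes termination clean. The degree argument gives $T_{n-1}'\in\T_T$, so you can take $T_n:=T_{n-1}'$. The map $S_n=S_{n-1}'\to T_{n-1}'$ is then already special by induction, and no adjustment by projective-injectives is needed. In exchange, the paper's version keeps an arbitrary prechosen approximation row of $T$ as a direct summand of the bottom row.

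\textbf{For the write-up.} State explicitly in the induction hypothesis that ``$S_i'\to T_i'$ is an admissible monic with cokernel in $\F_T$''. Also replace the ``snake-lemma argument'' with the three exact-category facts listed above.
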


\begin{proof}
In the category $\E$ the preimage of the Sequence \ref{SiltSeq} from Section \ref{SiltIntro} can be chosen to be spliced from admissible short exact sequences arising as the approximation sequences with respect to the cotorsion pair $(\F_S, \T_S)$. To introduce notation we will do it for both $S$ and $T$. In particular, 
consider the special approximation sequences of $A$ \[0 \to A \overset{\lambda}{\rightarrow} S_1 \overset{p_1}{\rightarrow} S_1' \rightarrow 0 \text{ and } 0 \to A \overset{\gamma}{\rightarrow} U_1 \overset{q_1}{\rightarrow} U_1' \rightarrow 0\]
induced by $(\F_S, \T_S)$ and $(\F_T, \T_T)$ respectively.  That is, $S_1\in \T_S$, $S_1'\in \F_S$, $U_1\in \T_T$ and $U_1'\in \F_T$.

Inductively define the special approximation sequences \[0 \to S_i' \overset{\lambda_i}{\rightarrow} S_{i+1} \overset{p_{i+1}}{\rightarrow} S_{i+1}' \rightarrow 0 \text{ and } 0 \to U_i' \overset{\gamma_i}{\rightarrow} U_{i+1} \overset{q_{i+1}}{\rightarrow} U_{i+1}' \rightarrow 0\] 
induced, again, by $(\F_S, \T_S)$ and $(\F_T, \T_T)$ respectively. So $S_{i+1}\in \T_S$, $S_{i+1}'\in \F_S$, $U_{i+1}\in \T_T$ and $U_{i+1}'\in \F_T$ for each $i\geq 1$.

In $\D(A)\simeq \underline{\E}$, the images of these admissible short exact sequences give us triangles as in the construction of Sequence \ref{SiltSeq}. Thus,  $S_{n-1}'\simeq S_n \in \D(A)$ and the image of $S_n'$ in $\D(A)$ is isomorphic to $0$. So $S_n'$ is a projective-injective object in $\E$ and the sequence $0\rightarrow S'_{n-1} \rightarrow S_n \rightarrow S_n'\rightarrow 0$ splits. We can chose $S_{n-1}'=S_n$ and $S_n'=0$ in $\E$. The same holds for the case of $T$ and we can choose $U_{n-1}'=U_n$ and $U_n'=0$ in $\E$. 
We also have $S_i \in \F_S\cap \T_S$, $U_i \in \F_T \cap \T_T$ for $1\leq i \leq n$, since the corresponding inclusions hold in $\D(A)$.

By assumption $T \in \U_S$, so we have $\U_T \subseteq \U_S$, and thus $\T_T \subseteq \T_S$.
Since $\lambda$ is a $\T_S$-preenvelope and $U_1\in \T_T \subseteq  \T_S$, there exists $s_1 \colon S_1 \to U_1$ such that $\gamma = s_1\lambda$.  This yields a commutative diagram 
\[\xymatrix{0 \ar[r] & A \ar[r]^\lambda \ar@{=}[d]  & S_1 \ar[r]^{p_1} \ar[d]^{s_1} & S_1' \ar[r] \ar[d]^{s_1'}  & 0  \\
0 \ar[r] & A \ar[r]^\gamma  & U_1 \ar[r]^{q_1}  & U_1' \ar[r]  & 0}\] 
and, inductively, we obtain commutative diagrams
\[\xymatrix{0 \ar[r] & S_{i}' \ar[r]^{\lambda_i} \ar[d]^{s_i'} & S_{i+1} \ar[r]^{p_{i+1}} \ar[d]^{s_{i+1}} & S_{i+1}' \ar[r] \ar[d]^{s_{i+1}'}  & 0  \\
0 \ar[r] & U_i' \ar[r]^{\gamma_i}  & U_{i+1} \ar[r]^{q_{i+1}}  & U_{i+1}' \ar[r]  & 0}\]
for $1\leq i <n$.

Next we modify these sequences so that the vertical maps are special $\T_T$-preenvelopes.

For each $1\leq i < n$, consider the special approximation sequence \[ 0 \to S_i' \overset{j_i}{\rightarrow} \overline{U_i} \rightarrow \overline{U_i'} \to 0,\] where $\overline{U_i} \in \T_T$ and $\overline{U_i'}\in \F_T$.  Since $S_i' \in \F_S \subseteq \F_T$, we have that $\overline{U_i}\in \T_T \cap \F_T$.  Using that $\lambda_i \colon S_i' \to S_{i+1}$ is a $\T_S$-preenvelope and that $\overline{U_i}\in \T_T \subseteq \T_S$, we also obtain $j_i' \colon S_{i+1} \to \overline{U_i}$ such that $j_i = j_i'\lambda_i$.  Using this data we can consider the following commutative diagrams with exact rows.
\[\xymatrix{0 \ar[r] & A \ar[rr]^\lambda \ar@{=}[d]  & & S_1 \ar[rr]^{p_1} \ar[d]^{t_1:=\left(\begin{smallmatrix} s_1\\j_1p_1 \end{smallmatrix}\right)} & & S_1' \ar[r] \ar[d]^{t_1':=\left(\begin{smallmatrix} s_1'\\j_1 \end{smallmatrix}\right)}  & 0  \\ 0 \ar[r] & A \ar[rr]_-{\left(\begin{smallmatrix} \gamma\\0 \end{smallmatrix}\right)} &  & U_1 \oplus \overline{U_1} \ar[rr]_-{\left(\begin{smallmatrix} q_1 & 0\\0 & 1 \end{smallmatrix}\right)}  & & U_1' \oplus \overline{U_1} \ar[r]  & 0,}\] 

\[\xymatrix{0 \ar[r] & S_{i}' \ar[rr]^{\lambda_i} \ar[d]^{t_i':=\left(\begin{smallmatrix} s_i'\\j_i \end{smallmatrix}\right)} & & S_{i+1} \ar[rr]^{p_{i+1}} \ar[d]^{t_{i+1}:= \left(\begin{smallmatrix} s_{i+1}\\j_i' \\ j_{i+1}p_{i+1} \end{smallmatrix}\right)} & & S_{i+1}' \ar[r] \ar[d]^{t_{i+1}' := \left(\begin{smallmatrix} s_{i+1}'\\j_{i+1} \end{smallmatrix}\right)}  & 0  \\
0 \ar[r] & U_i'\oplus \overline{U_i} \ar[rr]_-{\left(\begin{smallmatrix} \gamma_i & 0\\0 & 1 \\ 0 & 0 \end{smallmatrix}\right)}  & & U_{i+1}\oplus \overline{U_i} \oplus \overline{U_{i+1}} \ar[rr]_-{\left(\begin{smallmatrix} q_{i+1} & 0 & 0\\0 & 0 & 1 \end{smallmatrix}\right)}  & & U_{i+1}'\oplus \overline{U_{i+1}} \ar[r]  & 0.}\]

By virtue of the subcategories to which each object belongs to, we can deduce that the upper rows of the diagrams are special approximation sequences with respect to $(\F_S, \T_S)$ and the lower rows are special approximation sequences with respect to $(\F_T, \T_T)$. Setting $T_1 := U_1 \oplus \overline{U_1}$ and $T_{i+1} := U_{i+1}\oplus \overline{U_i} \oplus \overline{U_{i+1}}$ for $1\leq i < n$, we can construct the diagram in the statement of the proposition. Note that again we can choose $\overline{U_n}=0=\overline{U_n}'$.

Since we will splice these sequences together to obtain the diagram in the statement of the proposition, it remains to check that the middle vertical maps in these diagrams are special $\T_T$-preenvelopes.

The morphisms $t_i'$ are admissible monos, since so are $j_i$, and therefore also the morphisms $t_i$ are admissible monos.  Consider the admissible short exact sequences
\[  0 \to S_{i} \overset{t_{i}}{\longrightarrow} T_i \rightarrow Z_{i} \to 0 \] for $1\leq i \leq n$. We already know that $T_i \in \T_T$ and so it suffices to show that $Z_i\in \F_T$.  We will show that $\mathrm{Ext}^1_{\E}(Z_i, U)=0$ for all $U\in \T_T$ and hence that $Z_i \in \F_T$. Consider the long exact sequence 
\[\dots \to \mathrm{Hom}_{\E}(T_i, U) \overset{t_i^*}{\to} \mathrm{Hom}_{\E}(S_i, U) \to \mathrm{Ext}^1_{\E}(Z_i, U) \to \mathrm{Ext}^1_{\E}(T_i, U) \to  \dots .\]

Since $T_i\in \F_T$, we have that $\mathrm{Ext}^1_{\E}(T_i, U)=0$.  It is enough to show that $t_i^*$ is surjective.  We show that $t_1^*$ is surjective and explain how to modify the argument for $i>1$.  

Let $\alpha\in \mathrm{Hom}_{\E}(S_1, U)$ and consider $\alpha\lambda \colon A \to U$.  Using that $\gamma$ is a $\T_T$-preenvelope, we have that there exists $\alpha' \colon U_1 \to U$ such that $\alpha\lambda = \alpha'\gamma = \alpha's_1 \lambda$.  Then $0 = (\alpha -\alpha's_1 )\lambda$ so there exists a unique $\beta \colon S_1' \to U$ such that $\beta p_1 = \alpha - \alpha' s_1$ because $p_1$ is the cokernel of $\lambda$.  Now $j_1 \colon S_1' \to \overline{U_1}$ is a $\T_T$-preenvelope so there exists $\beta' \colon \overline{U_1} \to U$ such that $\beta = \beta' j_1$.  So $\alpha = \beta'j_1p_1 + \alpha's_1 = \begin{pmatrix}\alpha' & \beta' \end{pmatrix}t_1$.  This shows that $\alpha$ is in the image of $t_1^*$. The argument to show that $t_{i+1}^*$ is surjective for $1\leq i <n$ is similar with $\lambda_i$ playing the role of $\lambda$, $j_i$ playing the role of $\gamma$ and $j_i'$ playing the role of $s_1$.
\end{proof}

\begin{corollary}\label{Cor: diagram Frob}
Suppose we are in Setup \ref{SiltSet}. Then there is a commutative diagram in $\E$ with exact rows

\begin{equation}\label{SiltingDiagram}
    \xymatrix{
0 \ar[r] & A \ar[r] \ar@{=}[d] & T_0
^{(1)} \ar[r] \ar[d]^{f_0^{(1)}} & T_0^{(2)} \ar[r] \ar[d]^{f_0^{(2)}} & \dots \ar[r] & T_0^{(n)}  \ar[d]^{f_0^{(n)}} \ar[r] & 0 \\ 
0 \ar[r] & A \ar[r] \ar@{=}[d] & T_1^{(1)} \ar[r] \ar[d]^{f_1^{(1)}} & T_1^{(2)} \ar[r] \ar[d]^{f_1^{(2)}} & \dots \ar[r] & T_1^{(n)}  \ar[d]^{f_1^{(n)}} \ar[r] & 0\\
0 \ar[r] & A \ar[r] \ar@{=}[d] & T_2^{(1)} \ar[r] \ar[d]^-{f_2^{(1)}} & T_2^{(2)} \ar[r] \ar[d]^-{f_2^{(2)}} & \dots \ar[r] & T_2^{(n)}  \ar[d]^-{f_2^{(n)}}\ar[r] & 0 \\
&\vdots & \vdots &\vdots & &\vdots}
\end{equation}
 such that $f_{i}^{(j)}$ is a special $\T_{i+1}$-preenvelope for each $i\geq 0$ and $1 \leq j \leq n$.
    
\end{corollary}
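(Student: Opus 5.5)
We construct the diagram row by row by induction on $i$, applying Proposition \ref{SiltApp} at each step with $S$ equal to the silting object of the previous row and $T=T_{i+1}$. The point is that Proposition \ref{SiltApp} takes as input \emph{any} exact row for $S$ of the right shape and extends it, not just a row it builds itself; we make this explicit before running the induction.

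For the base row, we use the first part of the proof of Proposition \ref{SiltApp} applied to $T_0$. We splice the special approximation sequences of $A$ with respect to $(\F_0,\T_0)$ and get an exact row
\[0\to A\to T_0^{(1)}\to \dots\to T_0^{(n)}\to 0\]
in $\E$. This row is spliced from special approximation sequences $0\to T_0^{(j)\prime}\to T_0^{(j+1)}\to T_0^{(j+1)\prime}\to 0$, and it satisfies $T_0^{(j)}\in \F_0\cap\T_0$ and $T_0^{(j)\prime}\in\F_0$.

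For the inductive step, suppose row $i$ has been constructed and is spliced in this way from special approximation sequences for $(\F_i,\T_i)$. Setup \ref{SiltSet} gives $T_{i+1}\in\U_i$, so Proposition \ref{SiltApp} applies with $S=T_i$ and $T=T_{i+1}$. Its proof starts from an arbitrary splicing of special $(\F_S,\T_S)$-approximation sequences for $S$ and uses only the following properties of the top row:
\begin{itemize}
\item each map $S_j'\to S_{j+1}$ and $A\to S_1$ is a $\T_S$-preenvelope;
\item $S_j'\in\F_S\subseteq \F_T$.
\end{itemize}
It then produces the bottom row and the vertical maps. We may therefore feed in row $i$ as the top row, without rechoosing it.

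The output is row $i+1$, with vertical maps $f_i^{(j)}$ that are special $\T_{i+1}$-preenvelopes and a leftmost vertical map equal to the identity on $A$. Row $i+1$ is again spliced from special approximation sequences with respect to $(\F_{i+1},\T_{i+1})$: in the proof these are the sequences with terms $U_{j}'\oplus\overline{U_j}$ and $T_{j+1}$, which the proof checks to be special approximation sequences. Hence the inductive hypothesis is preserved. Stacking the commutative diagrams gives \eqref{SiltingDiagram}, with the required property of every $f_i^{(j)}$.

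The main obstacle is this compatibility: checking that the bottom row produced by Proposition \ref{SiltApp} has exactly the structure the proposition needs from its top row. Concretely, we must check that its short exact pieces are special approximation sequences, with middle terms in $\T_{i+1}$ and cokernels in $\F_{i+1}$, and that it ends with the split choice $\overline{U_n}=0$ and $U_n'=0$. These facts are all verified inside the proof of Proposition \ref{SiltApp}, so the induction closes without further work.
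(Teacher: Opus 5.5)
Your proposal is correct and follows the paper's intended argument: iterate Proposition~\ref{SiltApp} with $S=T_i$ and $T=T_{i+1}$, feeding each bottom row back in as the next top row. The paper does exactly this in Section~\ref{SecContSilt}. You also make explicit the point the paper leaves tacit, which is what lets the induction close: the proof of Proposition~\ref{SiltApp} works for any top row spliced from special $(\F_S,\T_S)$-approximation sequences, and the bottom row it produces is again of this form.
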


Note that after passing to the image of the Diagram \ref{SiltingDiagram} in $\D(A)$ each $f_i^{(j)}$ becomes a $\U_{i+1}$-preenvelope corresponding to the co-t-structure $(\W_{i+1}, \U_{i+1})$.

The aim of the rest of this section is to prove the following theorem.

\begin{theorem}\label{ThmSilt}
    Suppose we are in Setup \ref{SiltSet} and consider the diagram in Corollary \ref{Cor: diagram Frob}. 
Consider the directed system
\begin{equation}\label{SeqForColim}
    T_0' \overset{f_0}{\longrightarrow} T_1' \overset{f_1}{\longrightarrow}T_2' \overset{f_2}{\longrightarrow} \dots
\end{equation}
with $T_i' := \oplus_{j=1}^n T_i^{(j)}$ and $f_i \colon T_i' \to T_{i+1}'=\oplus_{j=1}^n f_i^{(j)}$. 
Let  $T := \clim_{\mathbb{N}}  T'_i$, then $T\simeq \hclim_{\mathbb{N}}  T'_i$ is an $n$-silting object in $\D(A)$ with $\U_T=\cap_{i\geq 0}\U_i$.

\end{theorem}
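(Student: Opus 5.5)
Since $\mathbb{N}$ is directed and directed colimits are exact in $\Ch(A)$, I will take $\clim_{\mathbb{N}} T_i' \simeq \hclim_{\mathbb{N}} T_i'$ for granted, as in Section \ref{SecHomLimColim}; the work is in identifying $T$. Taking colimits column by column in Diagram \ref{SiltingDiagram} gives an exact sequence of complexes
\[0\to A\to T^{(1)}\to T^{(2)}\to\dots\to T^{(n)}\to 0,\qquad T^{(j)}:=\clim_i T_i^{(j)},\]
with $T=\oplus_j T^{(j)}$. Splitting it into short exact sequences $0\to K^{(j-1)}\to T^{(j)}\to K^{(j)}\to 0$, with $K^{(0)}=A$ and $K^{(n-1)}=T^{(n)}$, yields triangles in $\D(A)$ (the colimits of the short exact sequences of rows, again by exactness). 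These are the homotopy colimits of the approximation triangles of the $T_i$. Thus $A\in \thick(T)$, and more precisely $A$ is built from $T$ by $n$ such triangles.

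The key steps, in order, are as follows.

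\textbf{Step 1.} I show $T\in \U:=\cap_i \U_i$. For fixed $k$, every $T_i^{(j)}$ with $i\ge k$ lies in $\U_i\subseteq \U_k$. Since $\U_k$ is an aisle of a compactly generated t-structure containing $\D^{\le -n+1}$, it is closed under coproducts, extensions and cones, and hence under homotopy colimits via the Milnor triangle $\oplus T_i'\to\oplus T_i'\to T$. Therefore $T\in\U_k$ for all $k$.

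\textbf{Step 2.} I show $T$ is an $n$-term complex of projectives up to isomorphism, i.e.\ $T\in \D^{\le 0}$ with $\Hom(T, -)$ vanishing on $\D^{\le -n}$. This follows because $\Hom(T,X)$ sits in the Milnor sequence $0\to \lim^1\Hom(T_i',X[-1])\to\Hom(T,X)\to\lim \Hom(T_i',X)\to 0$, and both outer terms vanish for $X\in\D^{\le -n}$.

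\textbf{Step 3 (the main obstacle).} I show $\Hom(T,T^{(J)}[>0])=0$, equivalently that $T\in {}^{\perp}(\U[1])$. By the Milnor sequence, for $U\in\U[1]$ I need
\[\lim_i\Hom(T_i',U)=0\quad\text{and}\quad \lim^1_i\Hom(T_i',U[-1])=0.\]
The first holds because $T_i'\in\W_i$ and $U\in\U_i[1]$. For $\lim^1$, I will use that $f_i$ is a $\U_{i+1}$-preenvelope and $U[-1]\in\U\subseteq\U_{i+1}$. Every map $T_i'\to U[-1]$ then factors through $f_i$, so each transition map $\Hom(T_{i+1}',U[-1])\to \Hom(T_i',U[-1])$ is surjective. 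The system is therefore Mittag-Leffler and $\lim^1=0$. This special-preenvelope property from Corollary \ref{Cor: diagram Frob} is exactly what the construction was designed for, and I expect it to be the crux; I apply it to the whole coproduct $T_i'=\oplus_j T_i^{(j)}$.

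\textbf{Step 4.} I conclude. Step 3 together with $T\in\U$ gives $\Hom(T,T^{(J)}[>0])=0$, since $\U$ is closed under coproducts. The triangles from the colimit resolution of $A$, combined with Step 2, give $\tria(\Add T)=\K^b(\Proj A)$. By \cite[Proposition 4.2]{AMVSiltMod}, $T$ is $n$-silting. Finally $\U_T\subseteq\U$ by Step 1. For the reverse inclusion, I take $X\in\U$; then $\Hom(T,X[k])=0$ for $k>0$ by the same Mittag-Leffler argument applied to $X[k]\in\U[1]$. Hence $X\in\U_T$, and so $\U_T=\cap_i\U_i$.
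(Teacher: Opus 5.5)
Your proof is correct, and its core is the paper's: the exact colimit sequence $0\to A\to T^{(1)}\to\dots\to T^{(n)}\to 0$, closure of each $\U_k$ under the tail homotopy colimit, and the Mittag-Leffler/Milnor argument of Lemma \ref{Ext-projective}, which uses that $f_i$ is a $\U_{i+1}$-preenvelope.

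Where you differ is the silting criterion.
\begin{itemize}
\item The paper never shows by hand that $T$ is an $n$-term complex. It forms the aisle $\U'$ generated by $T$, notes $\U'\subseteq\U$, and applies \cite[Remark 3]{NSZ} (generator plus $\Hom(T,\U'[1])=0$). The $n$-term property then comes for free from $n$-intermediacy of $\U_T=\U$, via \cite[Proposition 4.17]{PV} and \cite[Lemma 4.5]{AMVSiltMod}.
\item You first establish projective amplitude $[-n+1,0]$ and then check the two conditions of \cite[Proposition 4.2]{AMVSiltMod}. This is more hands-on, but it costs you Step 2. It also costs the passage from $A\in\tria(\Add T)$ to all of $\K^b(\Proj A)$: you need coproducts of the triangles, plus a swindle to get arbitrary projective summands. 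The paper's route needs only generation and avoids both.
\end{itemize}

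Three points to repair.
\begin{itemize}
\item In Step 2, the $\lim^1$ term does not vanish because the groups $\Hom(T_i',X[-1])$ vanish; they need not. For example, take $n=1$, so $T_i'\cong A$, and $X=A[1]$. It vanishes by the same Mittag-Leffler argument as in Step 3, since $X[-1]\in\D^{\le -n+1}\subseteq\U$. Indeed, $\D^{\le -n}\subseteq\U[1]$, so Step 2 is a special case of Step 3 and belongs after it.
\item $T_i'$ lies in the coheart $\W_i[1]\cap\U_i$, not in $\W_i$; an object of $\W_i\cap\U_i$ is zero. The vanishing $\Hom(T_i',\U_i[1])=0$ you need follows directly from $\Add T_i'=\Add T_i$ and the definition of $\U_i$.
\item Silting t-structures need not be compactly generated. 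In Step 1 you only need that any aisle is closed under coproducts, extensions and positive shifts.
\end{itemize}
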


Note that in $\D(A)$ we have $\Add{T'_i}=\Add{T_i}$, so we replace the $n$-silting objects $T_i$ by equivalent ones to construct the directed system.
 Before we prove the theorem we will need the following lemma.

\begin{lemma}\label{Ext-projective}
   Let $\U=\cap_{i\geq 0}\U_i$.
   Then for any object $X\in \U$ we have $\Hom_{\D(A)}(T, X[>0])=0$.
\end{lemma}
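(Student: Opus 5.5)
The plan is to compute $\Hom_{\D(A)}(T,X[k])$ for $k>0$ through the Milnor (telescope) triangle for the sequential homotopy colimit, and to control the resulting $\lim^1$-term using the preenvelope property of the maps $f_i$ from Corollary \ref{Cor: diagram Frob}.

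First, in $\Ch(A)$ there is the standard short exact sequence
\[0\rightarrow \bigoplus_{i\geq 0} T_i' \xrightarrow{\,1-\mathrm{shift}\,} \bigoplus_{i\geq 0} T_i' \rightarrow \clim_{\mathbb{N}} T_i'\rightarrow 0,\]
where $\mathrm{shift}$ is induced by the maps $f_i$. Since short exact sequences of complexes give triangles in $\D(A)$, and since $T=\clim_{\mathbb{N}} T_i'\simeq \hclim_{\mathbb{N}} T_i'$ (directed colimits are exact, see Section \ref{SecHomLimColim}), this yields a triangle
\[\bigoplus T_i' \rightarrow \bigoplus T_i' \rightarrow T \rightarrow \bigl(\bigoplus T_i'\bigr)[1].\]
We apply $\Hom_{\D(A)}(-,X[k])$ to it for $X\in\U$ and $k\geq 1$. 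The resulting long exact sequence gives an exact sequence
\[\prod_i \Hom(T_i',X[k-1]) \xrightarrow{\,1-\mathrm{shift}^*\,} \prod_i \Hom(T_i',X[k-1]) \rightarrow \Hom(T,X[k]) \rightarrow \prod_i \Hom(T_i',X[k]).\]

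Next, we use that $X\in\U\subseteq\U_i$ for every $i$ and that $T_i'\in\Add(T_i)$. This gives $\Hom(T_i',X[m])=0$ for all $m>0$, so the rightmost term vanishes. When $k\geq 2$, the leftmost two terms vanish as well, hence $\Hom(T,X[k])=0$.

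The only delicate case is $k=1$. Here $\Hom(T,X[1])$ is the cokernel of $1-\mathrm{shift}^*$ on $\prod_i\Hom(T_i',X)$, which is $\lim^1_i \Hom(T_i',X)$. To show it vanishes, it suffices to check that the inverse system $\Hom(T_{i+1}',X)\xrightarrow{f_i^*}\Hom(T_i',X)$ has surjective transition maps (Mittag-Leffler), or equivalently that $1-\mathrm{shift}^*$ is surjective. Surjectivity follows from the remark after Corollary \ref{Cor: diagram Frob}: in $\D(A)$ each $f_i^{(j)}$, and hence $f_i$, is a $\U_{i+1}$-preenvelope. Since $X\in\U\subseteq\U_{i+1}$, every morphism $T_i'\to X$ factors through $f_i$. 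Given $(a_i)\in\prod_i\Hom(T_i',X)$, we then solve $b_i-b_{i+1}f_i=a_i$ recursively: set $b_0=0$ and choose $b_{i+1}$ with $b_{i+1}f_i=b_i-a_i$. Hence $\lim^1=0$ and $\Hom(T,X[1])=0$.

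I expect the main (mild) obstacle to be this $k=1$ step, namely justifying that the preenvelope property in $\E$ descends to $\D(A)$ and making sure the telescope triangle really uses the given maps $f_i$. Everything else is formal.
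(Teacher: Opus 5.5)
Your proof is correct and follows the same route as the paper: the Milnor triangle for $T$, the long exact $\Hom$-sequence that handles $k\geq 2$, and, for $k=1$, surjectivity of $\mathrm{id}-(f_i^*)$ coming from the $\U_{i+1}$-preenvelope property of the $f_i$. The differences are cosmetic: you get the Milnor triangle directly from the telescope short exact sequence in $\Ch(A)$ instead of citing Keller--Nicolas, and you prove the surjectivity by an explicit recursion instead of citing the Mittag-Leffler sequence from G\"obel--Trlifaj.
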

\begin{proof}
Applying $\Hom_{\D(A)}(-, X)$ to the Sequence \ref{SeqForColim}, we obtain an inverse system of abelian groups 
    \[
    \dots \overset{f_2^*}{\longrightarrow} \Hom_{\D(A)}(T_2', X) \overset{f_1^*}{\longrightarrow}\Hom_{\D(A)}(T_1', X) \overset{f_0^*}{\longrightarrow}\Hom_{\D(A)}(T_0', X)  
    \]
with each $f_i^*$ surjective because $f_i$ is a $\U_{i+1}$-preenvelope and $X \in  \U$ by assumption.  It follows that this inverse system satisfies the Mittag-Leffler condition and so we have an exact sequence of abelian groups (see, for example, \cite[Lemma 3.6]{GobelTrlifaj}):
    \begin{equation}\label{Eq: ML sequence} 0 \to \varprojlim_{i\geq 0}\Hom_{\D(A)}(T_i', X) \to \prod_{i\geq 0}\Hom_{\D(A)}(T_i', X) \xrightarrow{id - (f_i^*)} \prod_{i\geq 0}\Hom_{\D(A)}(T_i', X) \to 0. \end{equation}

By \cite[Proposition 11.3]{KellerNicolas} $T$ is isomorphic to the Milnor colimit of the Sequence \ref{SeqForColim}, so there exists a triangle 
    \[\coprod_{i\geq 0} T_i' \xrightarrow{id-(f_i)} \coprod_{i\geq 0} T_i' \to T \to \coprod_{i\geq 0} T_i'[1]\]
expressing $T$ as this Milnor colimit.  Applying $\Hom_{\D(A)}(-, X)$ to this triangle we obtain a long exact sequence 
\[
\resizebox{1\textwidth}{!}{
     $ \dots \to \prod_{i\geq 0}\Hom_{\D(A)}(T_i', X) \overset{x}{\to} \prod_{i\geq 0}\Hom_{\D(A)}(T_i', X) \to \Hom_{\D(A)}(T, X[1]) \to \prod_{i\geq 0}\Hom_{\D(A)}(T_i', X[1]) \rightarrow \dots .$}\]
By assumption we have that  $\prod_{i\geq 0}\Hom_{\D(A)}(T_i', X[n]) =0$ for all $n>0$ and so the exactness of the sequence implies that $\Hom_{\D(A)}(T, X[n]) =0$ for all $n >1$.  Moreover, the morphism $x = id - (f_i^*)$ is an epimorphism by Equation (\ref{Eq: ML sequence}) and so also $\Hom_{\D(A)}(T, X[1]) =0$.
\end{proof}

\begin{proof}[Proof of Theorem \ref{ThmSilt}]
Since colimits commute with finite coproducts, $T\simeq \oplus_{i=1}^nT^{(j)}$ with $T^{(j)}=\clim_{\mathbb{N}}  T^{(j)}_i$, where the colimit is taken along the maps $f_i^{(j)}$ for a fixed index $j$.
By exactness of directed colimits  in $\Ch(A)$, the diagram \ref{SiltingDiagram} gives an exact sequence 
\[
0 \to A \to T^{(1)} \to T^{(2)} \to \dots \to T^{(n)} \to 0
\]
in $\Ch(A)$. This sequence is spliced from short exact sequences, which gives a sequence of triangles in $\D(A)$:
\[
A \longrightarrow T^{(1)} \longrightarrow P ^{(1)} \longrightarrow A[1],
\]
\[
P ^{(1)} \longrightarrow T^{(2)} \longrightarrow P ^{(2)} \longrightarrow P ^{(1)}[1],\]
\[\dots\]
\[
P ^{(n-2)} \longrightarrow T^{(n-1)} \longrightarrow T ^{(n)} \longrightarrow P ^{(n-2)}[1].
\]
Hence $T$ is a generator of $\D(A)$.

The object $T$ belongs to $\U_i$ for all $i$. Indeed, $T$ is isomorphic to the homotopy colimit of the Sequence \ref{SeqForColim} with the first $i$ terms removed, since this gives a cofinal subsequence. This whole sequence belongs to $\U_i$ and aisles in $\D(A)$ are closed under directed homotopy colimits. So $T\in\U=\cap_{i\geq 0}\U_i$.

Consider the t-structure $({}^{\perp}\V_T,\V_T)$ generated by the object $T$, this t-structure exists by \cite{AJSS}. Let us denote ${}^{\perp}\V_T$ by $\U'$. By \cite[Remark 3]{NSZ} in order to prove that $T$ is silting it is enough to check that $T$ is a generator of $\D(A)$ and $\Hom_{\D(A)}(T, \U'[1])=0$. The subcategory $\U'$ is the smallest subcategory containing $T$ and closed under coproducts, positive shifts and extensions. 
Since $\U$ is also closed under all these operations and contains $T$, we get that $\U'\subseteq \U$. The claim $\Hom_{\D(A)}(T, \U'[1])=0$ follows from Lemma \ref{Ext-projective} and we get that $T$ is a silting object. In particular $\U'={}^{\perp}\V_T=\U_T$. 

From Lemma \ref{Ext-projective} we immediately get that $\U\subseteq \U_T$. So $\U_T=\U=\cap_{i\geq 0}\U_i$.

It remains to check that $T$ is an $n$-silting object.
By \cite[Proposition 4.17]{PV} and \cite[Lemma 4.5]{AMVSiltMod}, it is enough to check that $(\U_T,\V_T)$ is $n$-intermediate. Since each $(\U_i,\V_i)$ is $n$-intermediate, that is $\D^{\leq -n+1}\subseteq \U_i \subseteq \D^{\leq 0}$, the same holds for $\U$. 
\end{proof}

Note that, since the object $T$ constructed in Theorem \ref{ThmSilt} is shown to be an n-silting complex, it is isomorphic in $\D(A)$ to a complex of projective $A$ modules concentrated in degrees  $[-n+1,0]$.

\begin{remark}
     Starting from Setup \ref{SiltSet} the construction of the silting object $T$ carried out in this section is not unique up to isomorphism, however, it is unique up to equivalence of silting objects, so for any other $T'$ constructed using the same procedure we have $\Add(T)=\Add(T')$. 
\end{remark}

\section{Countable limits of cosilting objects} 

In this section we will consider the dual question of describing the cosilting object corresponding to the intersection of a sequence of nested coailes given by $n$-cosilting objects. The proofs can't be dualised literally because homotopy limits and Milnor limits require more care in $\D(A)$ but the strategy is very similar.
As before we will work in the following setting: $(Q,W,R)$ is a hereditary abelian model structure on $\Ch(A)$ whose homotopy category is $\D(A)$, $Q\cap R =\E$ and $\underline{\E}\simeq \D(A)$. One particular case where computations are easier is the injective model structure with $\E=\dgI$, since $n$-cotilting objects are isomorphic to bounded complexes of injectives. 

 We will use the following notation: for an $n$-cosilting object $C$ in $\D(A)$, we denote the cotorsion pair in $\E$ associated to the co-t-structure $(\V^C, \W^C)$ via the bijection from \cite[Proposition 3.16]{SS} by $(\F^C, \T^C)$. In particular, $\V^C=\underline{\F}^C[-1]$ and $\W^C=\underline{\T}^C$. 
For a sequence of $n$-cosilting objects $C_0, C_1, C_2, \dots \in \D(A)$ we will denote the co-t-structures $(\V^{C_i}, \W^{C_i})$ by $(\V_i, \W_i)$ and the cotorsion pairs $(\F^{C_i}, \T^{C_i})$ by $(\F_i, \T_i)$.

The aim of this section is to explicitly construct an $n$-cosilting object corresponding to the t-structure whose coaisle is the intersection of a countable sequence of decreasing coaisles corresponding to $n$-cosilting objects $C_0, C_1, C_2, \dots \in \D(A)$. So we will work in the following setup:

\begin{setup}\label{CoSiltSet}
    Let $C_0, C_1, C_2, \dots \in \D(A)$ be a sequence of $n$-cosilting objects such that $C_{i+1}\in \V_i$ for $i\geq 0$. Note that the last condition is equivalent to $\V_{i+1}\subseteq \V_i$, since $\V_{i+1}$ is the smallest coaisle in $\D(A)$ containing $C_{i+1}$.
\end{setup}

Recall that $E$ denotes a fixed injective cogenerator of $\Mod A$. For the rest of the section, by abuse of notation, $E$ will also denote a fixed fibrant-cofibrant replacement of $E$ in the model structure $(Q, W, R)$. The proof of the following proposition is dual to the proof of Proposition \ref{SiltApp} and is left to the reader.

\begin{proposition}
Suppose $B$ and $C$ are $n$-cosilting objects in $\D(A)$ such that $B \in \V^C$.   Then there exists a commutative diagram with exact rows in $\E$

\[\xymatrix{0 \ar[r]  & B_n \ar[r] \ar[d] & B_{n-1} \ar[r] \ar[d] & \dots \ar[r] & B_1 \ar[r] \ar[d] & E \ar[r] \ar@{=}[d] & 0  \\
0 \ar[r]  & C_n \ar[r]  & C_{n-1} \ar[r]  & \dots \ar[r] & C_1 \ar[r] & E \ar[r] & 0}\] 

such that, for every $1\leq i\leq n$, we have $C_i\in \F^C\cap\T^C$, $B_i\in\F^B\cap \T^B$ and the vertical arrows are special $\F^B$-precovers.
\end{proposition}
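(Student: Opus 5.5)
We dualise the proof of Proposition \ref{SiltApp}, exchanging special preenvelopes for special precovers and $A$ for $E$. First note that $B\in\V^C$ gives $\V^B\subseteq\V^C$, since $\V^B$ is the smallest coaisle containing $B$. Under the bijection of \cite[Proposition 3.16]{SS} this yields $\F^B\subseteq\F^C$, and taking Ext-orthogonals gives $\T^C\subseteq\T^B$.

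The first step is to realise the cosilting sequences in $\E$. Take special $\F^C$-precovers $0\to C_1'\to V_1\to E\to 0$ and, inductively, $0\to C_{i+1}'\to V_{i+1}\to C_i'\to 0$ with $V_i\in\F^C$ and $C_i'\in\T^C$. Passing to $\D(A)\simeq\underline{\E}$, these become the approximation triangles from Section \ref{SiltIntro}. Hence $C_n'$ is projective-injective in $\E$, so the last sequence splits, and we may take $C_{n-1}'=V_n$ and $C_n'=0$. The degree bounds in $\D(A)$ give $V_i\in\F^C\cap\T^C$. We do the same for $B$, with objects $W_i$ and $B_i'$. Since $W_1\in\F^B\subseteq\F^C$ and $V_1\to E$ is an $\F^C$-precover, the map $W_1\to E$ lifts through $V_1$. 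Inductively, the maps on kernels $B_i'\to C_i'$ lift through the $\F^C$-precovers $V_{i+1}\to C_i'$. This produces commutative ladders with vertical maps $w_i\colon W_i\to V_i$ and $w_i'\colon B_i'\to C_i'$.

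The second step makes the vertical maps special $\F^B$-precovers, dually to adding the summands $\overline{U_i}$. For each $i$, take a special $\F^B$-precover $0\to\overline{W_i'}\to\overline{W_i}\xrightarrow{k_i} C_i'$. Since $C_i'\in\T^C\subseteq\T^B$ and $\T^B$ is closed under extensions, we get $\overline{W_i}\in\F^B\cap\T^B$. As $\overline{W_i}\in\F^B\subseteq\F^C$, $k_i$ factors through the $\F^C$-precover $V_{i+1}\to C_i'$ via some $k_i'\colon\overline{W_i}\to V_{i+1}$.

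Now set $B_1:=W_1\oplus\overline{W_1}$ and $B_{i+1}:=W_{i+1}\oplus\overline{W_i}\oplus\overline{W_{i+1}}$. The row maps are given by the transposes of the matrices in Proposition \ref{SiltApp}. The vertical maps are $(w_{i+1},\,k_i',\,\iota_{i+1}k_{i+1})$, where $\iota_{i+1}\colon C_{i+1}'\to V_{i+1}$ is the inclusion, and on the kernels $(w_i',\,k_i)$. The bottom rows remain special approximation sequences for $(\F^C,\T^C)$ and the top rows for $(\F^B,\T^B)$. Splicing these gives the diagram in the statement, with the top row lying in $\F^B\cap\T^B$ and the bottom row in $\F^C\cap\T^C$.

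The remaining step, which I expect to be the main bookkeeping obstacle, is to show that each vertical map $b_i\colon B_i\to C_i$ is a special $\F^B$-precover. The maps $k_i$ are admissible epis, so the kernel-level maps are admissible epis, and by the snake lemma (or the $3\times3$ lemma in $\E$) so are the $b_i$. Let $K_i$ be the kernel of $b_i$. Applying $\Hom_\E(F,-)$ to $0\to K_i\to B_i\to C_i\to 0$ for $F\in\F^B$, and using $\Ext^1_\E(F,B_i)=0$ because $B_i\in\T^B$, we reduce to showing that $(b_i)_*\colon\Hom_\E(F,B_i)\to\Hom_\E(F,C_i)$ is surjective. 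This then gives $\Ext^1_\E(F,K_i)=0$, so $K_i\in\T^B$.

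For the surjectivity, take $\alpha\colon F\to C_i$, where $C_i$ here denotes the middle term $V_i$. Compose it with the epi $V_i\to C_{i-1}'$ (or $V_1\to E$ when $i=1$). This composite lifts through the $\F^B$-precover on the previous level, namely $W_1\to E$ or $k_{i-1}$, and hence, via $w_i$ or $k_{i-1}'$, through $B_i$. The difference between $\alpha$ and this lift factors through the kernel $C_i'$. That map $F\to C_i'$ lifts through the precover $k_i$ into $\overline{W_i}$. Adding the two lifts shows that $\alpha$ lies in the image of $(b_i)_*$, which is the exact dual of the computation in the proof of Proposition \ref{SiltApp}.
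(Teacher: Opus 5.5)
Your proposal is correct and is exactly the dualisation of the proof of Proposition~\ref{SiltApp} that the paper intends (the paper leaves this proof to the reader). Your lifts $w_i, w_i'$, the extra summands $\overline{W_i}$ with factorisations $k_i'$, the admissibility of the $b_i$, and the surjectivity of $(b_i)_*$ are the duals of $s_i, s_i'$, $\overline{U_i}$, $j_i'$ and the argument for $t_i^*$. One caveat comes from the paper's conventions rather than from your argument: with the stated normalisation $\V^C=\underline{\F}^C[-1]$ one gets $\underline{\F^C\cap\T^C}=\V^C[1]\cap\W^C=\Prod(C)[1]$, so already for $n=1$, $C=E$ the bottom row would force $E\in\Prod(E)[1]$; hence your identification of the precover sequences with the triangles of Section~\ref{SiltIntro}, and the claim $V_i\in\F^C\cap\T^C$, need $(\F^C,\T^C)$ to be the cotorsion pair of the shifted co-t-structure $(\V^C[-1],\W^C[-1])$, i.e.\ $\underline{\F}^C=\V^C$, and with that normalisation every step of your argument goes through unchanged.
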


\begin{corollary}\label{Cor: co diagram Frob}
Suppose we are in Setup \ref{CoSiltSet}. Then there exists a commutative diagram in $\E$ with exact rows

\begin{equation}\label{eq: exact tower} \xymatrix{
&\vdots \ar[d]^{f_2^{(n)}} & \vdots  \ar[d]^{f_2^{(n-1)}} & \dots & \vdots \ar[d]^{f_2^{(1)}}  &\vdots \ar@{=}[d] & \\
0 \ar[r] & C_2^{(n)} \ar[r] \ar[d]^{f_1^{(n)}} &  C_2^{(n-1)} \ar[r] \ar[d]^{f_1^{(n-1)}} &   \dots \ar[r] & C_2^{(1)} \ar[r] \ar[d]^{f_1^{(1)}} & E  \ar@{=}[d] \ar[r] & 0 \\ 
0 \ar[r] & C_1^{(n)} \ar[r] \ar[d]\ar[d]^{f_0^{(n)}} &  C_1^{(n-1)} \ar[r] \ar[d]^{f_0^{(n-1)}} &   \dots \ar[r] & C_1^{(1)} \ar[r] \ar[d]^{f_0^{(1)}} & E  \ar@{=}[d] \ar[r] & 0 \\ 
0 \ar[r] & C_0^{(n)} \ar[r]  &  C_0^{(n-1)} \ar[r] &   \dots \ar[r] & C_0^{(1)} \ar[r]  & E   \ar[r] & 0  
}
\end{equation} such that $f_{i}^{(j)}$ is a special $\F_{i+1}$-precover for each $i\geq 0$ and $1 \leq j \leq n$.
    
\end{corollary}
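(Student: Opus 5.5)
The diagram is built row by row, from the bottom upwards, by induction on $i$. At each step we apply the (dual) proposition stated just above: it takes a pair of $n$-cosilting objects with nested coaisles and produces two exact rows joined by vertical special precovers. The one thing to arrange is that the new row lies over the row already constructed rather than over some arbitrary one. As in the silting case, we will work in $\E$ with the cotorsion pairs $(\F_i,\T_i)$ coming from the co-t-structures $(\V_i,\W_i)$.

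For the base row, take the approximation sequences of $E$ for $(\F_0,\T_0)$, exactly as in Section \ref{SiltIntro}, and splice them into an exact row
\[0\to C_0^{(n)}\to\dots\to C_0^{(1)}\to E\to 0\]
with every $C_0^{(j)}\in\F_0\cap\T_0$. Here we use the dual of the splitting argument in Proposition \ref{SiltApp}: the last cokernel is projective-injective in $\E$, so the sequence can be chosen to end at $E$.

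For the inductive step, suppose the $i$-th row has been built, with each $C_i^{(j)}\in\F_i\cap\T_i$, and that its intermediate kernels and images are the special approximations relative to $(\F_i,\T_i)$. Setup \ref{CoSiltSet} gives $C_{i+1}\in\V_i$, so $\V_{i+1}\subseteq\V_i$. This yields $\F_{i+1}\subseteq\F_i$ and hence $\T_i\subseteq\T_{i+1}$. We now apply the proposition above with $B=C_{i+1}$ and $C=C_i$. The point to check is that its proof, dual to that of Proposition \ref{SiltApp}, only uses the fact that the lower row is spliced from special $(\F^C,\T^C)$-approximation sequences. So we may take the already constructed $i$-th row as the lower row instead of building a fresh one.

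The dual construction then runs as follows. First build the $(\F_{i+1},\T_{i+1})$-row for $C_{i+1}$. Next lift its maps into the $i$-th row, using that those maps are $\F_i$-precovers and that $\F_{i+1}\subseteq\F_i$. Finally, add direct summands obtained from special $\F_{i+1}$-precovers of the intermediate objects $C_i'$, which makes the vertical maps special $\F_{i+1}$-precovers. This is the dual of the step in Proposition \ref{SiltApp} where $\overline{U_i}$ was added, and it gives the $(i+1)$-st row together with the maps $f_i^{(j)}$. The resulting objects lie in $\F_{i+1}\cap\T_{i+1}$, the squares commute, and the rightmost vertical map is the identity on $E$. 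Induction then produces the whole tower \eqref{eq: exact tower}.

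I expect the main obstacle to be this compatibility issue: the proposition as stated constructs both rows, while the induction needs the lower row fixed in advance. I will resolve it by inspecting the dual of the proof of Proposition \ref{SiltApp}. In that proof, the upper row ($S$) was a fixed choice of approximation sequences, and every map was obtained from the preenvelope property of that row's maps. Dually, in the cosilting version the fixed row is the lower row $C$, whose maps are $\F^C$-precovers, and the row for $B$ is built against it. So the construction goes through with the given row as the fixed lower row.
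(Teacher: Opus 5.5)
Your proposal is correct and matches the paper's implicit argument: the corollary is obtained by iterating the dual of Proposition \ref{SiltApp} with $B=C_{i+1}$ and $C=C_i$. This works because that construction accepts any fixed choice of special approximation sequences for the $C$-row and outputs a $B$-row that is again spliced from special $(\F_{i+1},\T_{i+1})$-approximation sequences (the dual of the paper's remark that the lower rows are special approximation sequences for $(\F_T,\T_T)$), which is exactly what lets your induction continue. One small slip in the base case: it is the final \emph{kernel} ($C_n'$ in the notation of Section \ref{SiltIntro}) that is projective-injective, so the last sequence splits and the row can be taken to start at $C_n=C_{n-1}'$; the row ends at $E$ by construction.
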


Considering the image of the diagram \ref{eq: exact tower} in $\D(A)$  each $f_i^{(j)}$ gives a $\V_{i+1}$-precover corresponding to the co-t-structure $(\V_{i+1}, \W_{i+1})$.

\begin{theorem}\label{ThmCoSilt}
    Suppose we are in Setup \ref{CoSiltSet} and consider the diagram in Corollary \ref{Cor: co diagram Frob}. Consider the inverse system 
\begin{equation}\label{SeqForLim}
    \dots \overset{f_2}{\longrightarrow} C_2'\overset{f_1}{\longrightarrow} C_1'\overset{f_0}{\longrightarrow} C_0' 
\end{equation}
 with $C_i' := \oplus_{j=1}^n C_i^{(j)}$ and $f_i \colon C_{i+1}' \to C_{i}'=\oplus_{j=1}^n f_i^{(j)}$. Let $C := \lim_{\mathbb{N}^{op}}  C_i$. Then $C\simeq \hlim_{\mathbb{N}^{op}}  C_i$ is an $n$-cosilting object in $\D(A)$ with $\V^C=\cap_{i\geq 0}\V_i$.
\end{theorem}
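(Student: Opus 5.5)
The plan is to dualise the proof of Theorem \ref{ThmSilt}. The one genuinely new ingredient is the identification of the limit in $\Ch(A)$ with the homotopy limit. Throughout, $C$ means $\lim C_i'$ taken along the system (\ref{SeqForLim}).

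\textbf{Step 1: $C\simeq \hlim C_i'$.} By the description of fibrant objects in $\Ch(A)^{\mathbb{N}^{op}}$ following Theorem \ref{Thm: diag model structure}, it suffices to check three things. First, $C_0'$ is fibrant; this holds since it lies in $\E$. Second, each $f_i$ is a fibration, that is, an epimorphism with kernel in $R$. Each $f_i^{(j)}$ is a special $\F_{i+1}$-precover, so it sits in an admissible short exact sequence $0\to K\to C_{i+1}^{(j)}\to C_i^{(j)}\to 0$ in $\E$ with $K\in \T_{i+1}\subseteq \E\subseteq R$. Admissible epis in $\E$ are epimorphisms in $\Ch(A)$, and taking direct sums over $j$ preserves all of this. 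Third, there are no limit ordinals below $\omega$, so nothing further is needed. Hence the diagram is fibrant and $\hlim$ is computed by $\lim$.

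\textbf{Step 2: $C$ cogenerates and lies in $\V=\cap\V_i$.} Since $C$ is a finite direct sum, $C\simeq\oplus_j C^{(j)}$ with $C^{(j)}=\lim_i C_i^{(j)}$. Each column of (\ref{eq: exact tower}) is a tower of epimorphisms, so $\lim^1$ vanishes degreewise and taking limits of the rows is exact. This gives an exact sequence $0\to C^{(n)}\to\dots\to C^{(1)}\to E\to 0$ in $\Ch(A)$. Splicing it into triangles shows that $E\in\thick(C)$ in a cogeneration sense, so $C$ cogenerates $\D(A)$.

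Next, $C$ is also the homotopy limit of the cofinal subsystem starting at index $i$, all of whose terms lie in $\V_i$. Coaisles are closed under homotopy limits: this follows from the Milnor triangle $C\to\prod C_i'\to\prod C_i'\to C[1]$, using that coaisles are closed under products, extensions and $[-1]$. Therefore $C\in\V_i$ for every $i$.

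\textbf{Step 3 (key lemma, dual of Lemma \ref{Ext-projective}).} The claim is that $\Hom(X,C[>0])=0$ for all $X\in\V$. Apply $\Hom(X,-)$ to the Milnor triangle. Because each $f_i$ is a $\V_{i+1}$-precover in $\D(A)$ and $X\in\V_{i+1}$, the maps $f_{i*}\colon\Hom(X,C_{i+1}')\to\Hom(X,C_i')$ are surjective. The Mittag-Leffler condition then makes $\mathrm{id}-(f_{i*})$ surjective on $\prod\Hom(X,C_i')$.

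Moreover, $\Hom(X,C_i'[k])=0$ for $k>0$, since $C_i'\in\Prod(C_i)$ and $X\in\V_i\subseteq\V^{C_i}$. The long exact sequence then gives $\Hom(X,C[k])=0$ for $k\ge1$. I expect this step to be the main obstacle, as in the silting case: one has to make sure the precover property is genuinely used in $\D(A)$ and not just in $\E$.

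\textbf{Step 4: conclusion.} Take the t-structure cogenerated by $C$. It exists because $C$ is pure-injective as a bounded complex of injectives; alternatively one can use \cite[Proposition 4.9]{PV}, or the dual of \cite[Remark 3]{NSZ}. Its coaisle $\V'$ is the smallest coaisle containing $C$, so $\V'\subseteq\V$. Step 3 gives $\Hom(\V',C[1])=0$, so the dual criterion shows that $C$ is cosilting. Step 3 also gives $\V\subseteq\V^C$, hence $\V^C=\V$.

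Finally, $\D^{\ge n-1}\subseteq\V\subseteq\D^{\ge0}$, so the t-structure is $n$-intermediate. The duals of \cite[Proposition 4.17]{PV} and \cite[Lemma 4.5]{AMVSiltMod}, as used in \cite{MVSiltCosilt}, then show that $C$ is isomorphic to a complex of injectives in degrees $[0,n-1]$, i.e. that $C$ is $n$-cosilting.
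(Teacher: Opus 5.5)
Steps 1--3 are sound and match the paper's argument. They cover fibrancy of the tower, the Milnor triangle, the Mittag-Leffler argument giving $\Hom(X,C[>0])=0$ for $X\in\V$, cogeneration via the limit of the rows, and $C\in\V$. Your direct Milnor-triangle proof that coaisles are closed under these countable homotopy limits is a fine substitute for the citation of \cite{SSV}.

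The gap is the first sentence of Step 4. There you need the t-structure cogenerated by $C$ to exist before you know that $C$ is cosilting, and none of your justifications supplies it.
\begin{itemize}
\item A bounded complex of injectives need not be pure-injective in $\D(A)$. In $\D(\mathbb{Z})$ the complex $\mathbb{Q}\to\mathbb{Q}/\mathbb{Z}$ is quasi-isomorphic to the stalk complex $\mathbb{Z}$, which is not pure-injective because $\mathbb{Z}$ is not algebraically compact.
\item Pure-injectivity of $n$-cosilting objects is proved in \cite{MVSiltCosilt}, but it presupposes cosilting. The same is true of \cite[Proposition 4.9]{PV}.
\item The dual of \cite[Remark 3]{NSZ} takes the cogenerated t-structure as an input. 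In the silting case that input came from \cite{AJSS}, which has no general analogue for coaisles built from products.
\end{itemize}
So this, not Step 3, is where the cosilting case genuinely departs from the silting one.

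The paper closes exactly this point with \cite[Theorem 2.8]{Breaz}. That criterion never requires the cogenerated t-structure to exist a priori. It asks for four things:
\begin{enumerate}
\item $\Hom(\Prod(C),C[>0])=0$;
\item $C$ cogenerates $\D(A)$;
\item some complete precoaisle $\V'$ contains $C$, i.e.\ $\V'$ is closed under extensions, products, summands and $[-1]$;
\item $\Hom(\V'[-m],C)=0$ for some $m>0$.
\end{enumerate}
Take $\V'=\V$; you have already proved everything needed. Condition (1) follows from Step 3, since $\Prod(C)\subseteq\V$. Conditions (2) and (3) are Step 2, and (4) with $m=1$ is Step 3.

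Once $C$ is known to be cosilting, \cite[Proposition 4.9]{PV} legitimately gives $\V^C\subseteq\V$, and Step 3 gives $\V\subseteq\V^C$. Your $n$-intermediate argument then finishes the proof as written.
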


In $\D(A)$, we have $\Prod C_i'=\Prod C_i$, so we only replace $n$-cosilting objects $C_i$ by equivalent
ones to construct the inverse system. Here the limit is taken along the maps $\oplus_{i=1}^n f_{i}^{(j)}$. Since each object $C_i^{(j)}$ belongs to $\E$ and the maps $f_{i}^{(j)}$ are surjective with kernel in $\E$, we see by discussion in the end of Section \ref{SecHomLimColim}, that the diagram is fibrant and the homotopy limit in $\D(A)$ can be computed as limit in $\Ch(A)$. 
 Before we prove the theorem we will need a couple of lemmas.

\begin{lemma}\label{MilnorLim}
Object $C$ is isomorphic to the Milnor limit of the Sequence \ref{SeqForLim}. In particular, there is a triangle 
  \[C \to \prod_{i\geq 0} C_i' \xrightarrow{id-(f_i)} \prod_{i\geq 0} C_i' \to C[1].\]
\end{lemma}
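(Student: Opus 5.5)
The plan is to produce the Milnor triangle from a short exact sequence of chain complexes, rather than in the derived category directly. Consider in $\Ch(A)$ the map
\[
\prod_{i\geq 0} C_i' \xrightarrow{id-(f_i)} \prod_{i\geq 0} C_i',
\]
sending $(x_i)_i$ to $(x_i - f_i(x_{i+1}))_i$. Its kernel is exactly $\lim_{\mathbb{N}^{op}} C_i' = C$ (this is the standard description of a sequential limit). So it suffices to show that this map is surjective degreewise. Then
\[
0 \to C \to \prod_{i} C_i' \to \prod_i C_i' \to 0
\]
is a short exact sequence in $\Ch(A)$, which yields a triangle in $\D(A)$.

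Surjectivity is the usual Mittag-Leffler argument. Each $f_i$ is a special $\F_{i+1}$-precover, hence an admissible epi in $\E$, hence an epimorphism of complexes. Given $(y_i)$, we solve $x_i - f_i(x_{i+1}) = y_i$ recursively: put $x_0 = 0$, and at each step choose $x_{i+1}$ with $f_i(x_{i+1}) = x_i - y_i$. This is possible degreewise because $f_i$ is surjective in each degree.

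The main point is then to identify the middle terms with derived products, so that the triangle really is the Milnor limit triangle in $\D(A)$. Products in $\D(A)$ are computed by taking products of fibrant replacements. Each $C_i'$ lies in $\E \subseteq R$, so it is fibrant. In an abelian model structure, $R$ is the right class of the cotorsion pair $(Q\cap W, R)$, and so it is closed under products in $\Ch(A)$, since $\Ext^1(X,-)$ commutes with products. Hence $\prod_i C_i'$ computed in $\Ch(A)$ represents $\prod_i C_i'$ in $\D(A)$. Equivalently, the derived product is the right derived functor of $\prod$, computed on fibrant objects, and all weak equivalences here are objectwise.

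Finally, check that the map $id-(f_i)$ induced in $\D(A)$ agrees with the Milnor map. This holds because the chain maps $f_i$ represent the corresponding morphisms in $\underline{\E} \simeq \D(A)$. The short exact sequence therefore gives the triangle
\[
C \to \prod_{i\geq 0} C_i' \xrightarrow{id-(f_i)} \prod_{i\geq 0} C_i' \to C[1].
\]
By definition of the Milnor limit, $C$ is isomorphic to it; this is consistent with $C \simeq \hlim$ established above via fibrancy.

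The step needing the most care is that products of fibrant objects compute products in $\D(A)$. If closure of $R$ under products is not available directly, use instead that $\D(A)$ has exact products: the product in $\Ch(A)$ of any complexes is quasi-isomorphic to their derived product. This fallback works because products of modules are exact, so cohomology commutes with products.
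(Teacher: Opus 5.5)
Your proposal is correct and follows the paper's proof. Surjectivity of the $f_i$ gives the short exact sequence $0 \to C \to \prod_{i} C_i' \to \prod_{i} C_i' \to 0$ in $\Ch(A)$, which yields the Milnor triangle in $\D(A)$; the paper gets this sequence by citing the Mittag-Leffler lemma of G\"obel--Trlifaj, where you do the recursive lifting by hand. Your extra check that the product in $\Ch(A)$ represents the product in $\D(A)$, via exactness of products of modules or closure of fibrant objects under products, is a point the paper leaves implicit, and your argument for it is sound.
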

\begin{proof}
Since all morphisms $f_i$ are surjective this inverse system satisfies the Mittag-Leffler condition in $\Ch(A)$ and so by \cite[Lemma 3.6]{GobelTrlifaj} we have a short exact sequence in $\Ch(A)$:
\[ 0 \to \varprojlim_{i\geq 0} C_i' \to \prod_{i\geq 0} C_i' \xrightarrow{id - (f_i)} \prod_{i\geq 0} C_i' \to 0. \]
 This short exact sequence in $\Ch(A)$ gives a triangle in $\D(A)$ representing $C$ as the Milnor limit of Sequence \ref{SeqForLim}.
\end{proof}

\begin{lemma}\label{Ext-injective}
   Let $\V:=\cap_{i\geq 0}\V_i$.
   Then for any object $X\in \V$ we have $\Hom_{\D(A)}(X, C[>0])=0$.
\end{lemma}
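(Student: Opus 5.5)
We dualise the proof of Lemma \ref{Ext-projective}, using the Milnor limit triangle of Lemma \ref{MilnorLim} in place of the Milnor colimit triangle. Fix $X\in\V$ and apply $\Hom_{\D(A)}(X,-)$ to the triangle
\[C \to \prod_{i\geq 0} C_i' \xrightarrow{id-(f_i)} \prod_{i\geq 0} C_i' \to C[1].\]
Since $\Hom_{\D(A)}(X,-)$ commutes with products, this gives a long exact sequence
\[\prod_{i\geq 0}\Hom_{\D(A)}(X,C_i'[k-1]) \xrightarrow{\,x_{k-1}\,} \prod_{i\geq 0}\Hom_{\D(A)}(X,C_i'[k-1]) \to \Hom_{\D(A)}(X,C[k]) \to \prod_{i\geq 0}\Hom_{\D(A)}(X,C_i'[k]),\]
where $x_{k-1}=id-((f_i)_*)$.

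The first step is to check that the outer terms vanish for $k>0$. Each $C_i'\in\Prod(C_i)$, and $C_i$ is cosilting with coaisle $\V_i$, so $\Hom_{\D(A)}(\V_i,C_i'[>0])=0$. Since $X\in\V\subseteq\V_i$ for every $i$, we get $\Hom_{\D(A)}(X,C_i'[k])=0$ for all $k>0$ and all $i$. For $k\geq 2$ both the left and right terms of the long exact sequence are therefore zero, and $\Hom_{\D(A)}(X,C[k])=0$ follows.

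The remaining case $k=1$ is the main point. Here $\Hom_{\D(A)}(X,C[1])$ is the cokernel of $x_0=id-((f_i)_*)$ acting on $\prod_{i\geq 0}\Hom_{\D(A)}(X,C_i')$, so we need $x_0$ to be surjective. The image of each $f_i$ in $\D(A)$ is a $\V_{i+1}$-precover coming from the co-t-structure $(\V_{i+1},\W_{i+1})$, as noted after Corollary \ref{Cor: co diagram Frob}. Since $X\in\V_{i+1}$, each map $(f_i)_*\colon\Hom_{\D(A)}(X,C_{i+1}')\to\Hom_{\D(A)}(X,C_i')$ is surjective. The inverse system $\big(\Hom_{\D(A)}(X,C_i'),(f_i)_*\big)$ therefore satisfies the Mittag-Leffler condition, and by \cite[Lemma 3.6]{GobelTrlifaj} the map $id-((f_i)_*)$ on the product is an epimorphism with kernel the inverse limit. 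Hence $\Hom_{\D(A)}(X,C[1])=0$.

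The only potential subtlety is the precover claim: the maps $f_i=\oplus_j f_i^{(j)}$ must become $\V_{i+1}$-precovers in $\D(A)$. This holds because each $f_i^{(j)}$ is a special $\F_{i+1}$-precover in $\E$, and under \cite[Proposition 3.16]{SS} the class $\F_{i+1}$ maps to $\V_{i+1}$ up to the shift convention. The shift should be checked against the definition $\V^C=\underline{\F}^C[-1]$ made earlier; apart from this the argument is a direct dual of Lemma \ref{Ext-projective}.
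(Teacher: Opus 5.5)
Your proposal is correct and is essentially the paper's proof: apply $\Hom_{\D(A)}(X,-)$ to the Milnor limit triangle of Lemma~\ref{MilnorLim}, kill $\Hom_{\D(A)}(X,C[k])$ for $k\geq 2$ using $\Hom_{\D(A)}(\V_i,C_i'[>0])=0$, and for $k=1$ use surjectivity of the $(f_i)_*$ (precover property), Mittag-Leffler and \cite[Lemma 3.6]{GobelTrlifaj} to get $id-((f_i)_*)$ surjective. The shift you flag is harmless, and the paper uses the same precover property without comment: even reading $\underline{\F}_{i+1}=\V_{i+1}[1]$ literally, $f_i$ is a $\V_{i+1}[1]$-precover, and $X\in\V\subseteq\V_{i+1}\subseteq\V_{i+1}[1]$ because coaisles are closed under $[-1]$, so maps from $X$ still factor through $f_i$.
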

\begin{proof}
Applying $\Hom_{\D(A)}(X, -)$ to the Sequence \ref{SeqForLim}, we obtain an inverse system of abelian groups 
    \[
    \dots \overset{f_2^*}{\longrightarrow} \Hom_{\D(A)}(X, C_2') \overset{f_1^*}{\longrightarrow}\Hom_{\D(A)}(X, C_1') \overset{f_0^*}{\longrightarrow}\Hom_{\D(A)}(X, C_0')  
    \]
with each $f_i^*$ surjective because $f_i$ is a $\V_{i+1}$-precover and $X \in \V$ by assumption.  It follows that this inverse system satisfies the Mittag-Leffler condition and so again by \cite[Lemma 3.6]{GobelTrlifaj} we have an exact sequence of abelian groups:
    \begin{equation}\label{Eq: ML sequence 2} 0 \to \varprojlim_{i\geq 0}\Hom_{\D(A)}(X, C_i') \to \prod_{i\geq 0}\Hom_{\D(A)}(X, C_i') \xrightarrow{id - (f_i^*)} \prod_{i\geq 0}\Hom_{\D(A)}(X, C_i') \to 0. \end{equation}

Now consider the triangle 
    \[C \to \prod_{i\geq 0} C_i' \xrightarrow{id-(f_i)} \prod_{i\geq 0} C_i' \to C[1]\]
whose existence is guaranteed by Lemma \ref{MilnorLim}.  Applying $\Hom_{\D(A)}(X,-)$ to this triangle we obtain a long exact sequence
\[
\resizebox{1\textwidth}{!}{
     $\dots \to \prod_{i\geq 0}\Hom_{\D(A)}(X, C_i') \overset{x}{\to} \prod_{i\geq 0}\Hom_{\D(A)}(X, C_i') \to \Hom_{\D(A)}(X, C[1]) \to \prod_{i\geq 0}\Hom_{\D(A)}(X, C_i'[1]) \rightarrow \dots .$}\]
By assumption we have that  $\prod_{i\geq 0}\Hom_{\D(A)}(X, C_i'[n]) =0$ for all $n>0$ and so the exactness of the sequence implies that $\Hom_{\D(A)}(X, C[n]) =0$ for all $n >1$.  Moreover, the morphism $x = id - (f_i^*)$ is an epimorphism by Equation (\ref{Eq: ML sequence 2}) and so also $\Hom_{\D(A)}(X, C[1]) =0$.
\end{proof}

\begin{proof}[Proof of Theorem \ref{ThmCoSilt}]
To prove that the object $C$ is cosilting we will check the conditions of \cite[Theorem 2.8]{Breaz}. Concretely, we need to check that 
\begin{enumerate}
    \item $\Hom_{\D(A)}(\Prod(C),C[>0])=0$;  
    \item $C$ is a cogenerator of $\D(A)$;
    \item there exists a complete precoaisle $\V'$ containing $C$;
    \item $\Hom_{\D(A)}(\V'[-m], C)=0$ for some $m>0$.
\end{enumerate}

Here by a \emph{complete precoaisle}  in $\D(A)$ we mean a subcategory of $\D(A)$ closed under extensions, products, direct
summands and negative shifts.
The role of $\V'$ will be played by $\V$, which is a complete precoaisle, since each of $\V_i$ is. Let us check that $C\in \V$. Indeed, $\V$ is closed under homotopy limits as each of $\V_i$ is (see \cite[Proposition 5.2]{SSV}), and $C$ is isomorphic to the homotopy limit of the Sequence \ref{SeqForLim} with the first $i$ terms removed for each $i$. This shows (3). Condition (1) follows from Lemma \ref{Ext-injective} and the fact that $\V$ is closed under products. Condition (4) follows from Lemma \ref{Ext-injective} again with $m=1$.

The proof of the fact that $C$ is a cogenerator in $\D(A)$ is dual to the corresponding proof for the silting case. Diagram \ref{eq: exact tower} is spliced from short exact sequences in $\Ch(\Mod A^{\mathbb{N}^{op}})$ by construction. Taking the homotopy limits of these short exact sequences gives $n-1$ triangles in $\D(A)$, inductively building $E$ from summands of $C$. This shows that $E$ belongs to the thick subcategory generated by $C$, so $C$ cogenerates $\D(A)$. This proves (4), and we get that $C$ is cosilting.

Let us check that the coaisle $\V^C$ coincides with $\V=\bigcap_{i\geq 0}\V_i$. 
    Since $\V^C$ is the smallest coaisle in $\D(A)$ containing $C$ and $C\in \V_i$ for all $i$ we get that $\V^C \subseteq \V_i$ for all $i$, so $\V^C \subseteq \V$. Now by Lemma \ref{Ext-injective} again we get  $\V\subseteq \V^C$. So $\V^C=\V$.

  By \cite[Proposition 4.17]{PV} a cosilting object in $\D(A)$ is $n$-cosilting if and only if the corresponding t-structure is $n$-intermediate, which holds for $(\U^C,\V^C)$. 
\end{proof}

We will finish this section with the following observation: under certain technical assumptions the $\F$-precover associated to the cotorsion pair $(\F, \T )$ can be chosen to be an $\F$-cover as follows from the following proposition. The conditions of the proposition hold in the case when the co-t-structure is associated to an $n$-cosilting object in $\D(A)$ \cite[Theorem 4.6]{LakingDer}. We will prove the proposition for the injective model structure, which is the most relevant for the case of cosilting objects.

\begin{proposition}
    Let $(\F, \T )$ be a cotorsion pair in $\dgI$ corresponding to a co-t-structure $(\underline{\F}[-1], \underline{\T} )$ in $\D(A)$ such that $\underline{\F}[-1]$ is closed under directed homotopy colimits, then for any $X\in \dgI$ the cotorsion pair approximation sequence 
    \[
    0\rightarrow T\rightarrow F \xrightarrow{f} X \rightarrow 0
    \]
    with $T\in\T$ and $F\in\F$ can be chosen in such a way that $f$ is an $\F$-cover.
\end{proposition}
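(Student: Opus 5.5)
The plan is to derive the existence of $\F$-covers from Enochs' theorem. Recall its statement: in a Grothendieck category, if a class $\mathcal{G}$ is closed under direct limits and every object has a $\mathcal{G}$-precover, then every object has a $\mathcal{G}$-cover. The class $\F$ itself is unsuitable, because $\dgI$ is not closed under direct limits in $\Ch(A)$. We therefore enlarge it inside $\Ch(A)$ to
\[
\mathcal{G}=\{Y\in \Ch(A)\mid \text{the image of } Y \text{ in } \D(A) \text{ lies in } \underline{\F}\},
\]
so that $\F=\mathcal{G}\cap\dgI$. The class $\mathcal{G}$ is closed under direct limits in $\Ch(A)$. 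Indeed, directed colimits in $\Ch(A)$ compute homotopy colimits in $\D(A)$, and $\underline{\F}=\underline{\F}[-1][1]$ is closed under directed homotopy colimits by hypothesis, because the shift commutes with homotopy colimits.

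First we show that the given approximation $f\colon F\to X$ is a $\mathcal{G}$-precover in $\Ch(A)$. Let $G'\in\mathcal{G}$ and let $h\colon G'\to X$ be a chain map. Choose a monic quasi-isomorphism $G'\to I(G')$ with $I(G')\in\dgI$; its cokernel is acyclic. Since $\Ext^1_{\Ch(A)}(\text{acyclic},\dgI)=0$ for the injective model structure, $h$ extends to a map $I(G')\to X$. Now $I(G')\in\mathcal{G}\cap\dgI=\F$. Moreover $\Ext^1_{\E}(I(G'),T)=0$, so applying $\Hom$ to $0\to T\to F\to X\to 0$ shows that $I(G')\to X$ factors through $f$. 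Restricting this factorization to $G'$ gives the required factorization of $h$ through $f$. Hence, by Enochs' theorem applied in the Grothendieck category $\Ch(A)$, $X$ has a $\mathcal{G}$-cover $g\colon G\to X$.

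Next we transfer the cover back to $\dgI$ using the standard fact that a cover is a direct summand of any precover. Since $f$ is a $\mathcal{G}$-precover and $g$ is a $\mathcal{G}$-cover, there is a decomposition $F\cong G\oplus K$ under which $f$ corresponds to $(g,0)$. Then $G$ is a direct summand of a DG-injective complex, hence DG-injective. Its image in $\D(A)$ is a summand of an object of $\underline{\F}$, and $\underline{\F}$ is closed under summands as one half of a co-t-structure. Thus $G\in\F$, and $g$ is an $\F$-cover, since $\F\subseteq\mathcal{G}$.

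Finally we check that $g$ fits into an approximation sequence of the required form. The map $g$ is surjective because $(g,0)$ is. Its kernel satisfies $\ker f\cong \ker g\oplus K$, so $\ker g$ is a direct summand of $T$. Consequently $\ker g\in\dgI$ and $\ker g\in\T$, as $\T$ is closed under summands, being the right half of a cotorsion pair. This yields
\[
0\to \ker g\to G\xrightarrow{g} X\to 0
\]
in $\dgI$, with $\ker g\in\T$, $G\in\F$ and $g$ an $\F$-cover.

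The step I expect to be the main obstacle is checking that $\mathcal{G}$ really is closed under arbitrary direct limits in $\Ch(A)$. This requires two things: that membership in $\mathcal{G}$ depends only on the quasi-isomorphism class of a complex, which holds by definition, and that the hypothesis on homotopy colimits applies to arbitrary directed systems of chain complexes, not just those of DG-injectives. The second point needs care with the identification of $\clim$ and $\hclim$ for directed diagrams in $\Ch(A)$.
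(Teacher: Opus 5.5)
Your argument is correct, but it is organised differently from the paper's.

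The paper first shows that $\overline{\F}$ (your $\mathcal{G}$) is a precovering class in all of $\Ch(A)$. For an arbitrary complex $X$ it embeds $X$ into a DG-injective $iX$ with acyclic cokernel, takes an $\F$-precover of $iX$ in $\dgI$, and pulls back. It then invokes El Bashir's theorem to conclude that $\overline{\F}$ is covering. Finally, for $X\in\dgI$, it compares the $\overline{\F}$-cover $f\colon F\to X$ with an $\F$-precover $f'\colon F'\to X$. It factors $f$ through $f'$ via the DG-injective envelope $F\to iF$ and deduces that $F$ is a summand of $F'$.

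You skip the pullback construction entirely. You run the same envelope-and-extend trick for an arbitrary $G'\in\mathcal{G}$, whereas the paper only does it for $G'=F$ in its last step. This shows directly that the given $\F$-approximation of $X\in\dgI$ is already a $\mathcal{G}$-precover in $\Ch(A)$. The general fact that a cover is a summand of any precover then finishes the proof. Your route is shorter and more transparent. The paper's route buys the stronger by-product that every complex, not only a DG-injective one, has an $\overline{\F}$-cover.

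One correction is needed. You only produce a $\mathcal{G}$-precover of the particular object $X$, so you must use Enochs' theorem in its pointwise form: if $\mathcal{G}$ is closed under direct limits, then any object that has a $\mathcal{G}$-precover has a $\mathcal{G}$-cover. This is the form in which Enochs' argument actually proceeds, and it remains valid in Grothendieck categories. The global version you quote would require precovers for all objects of $\Ch(A)$, which you have not established.

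Your treatment of the closure of $\mathcal{G}$ under direct limits is at the same level of detail as the paper's.
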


\begin{proof}
    Let us denote by $\overline{\F}$ the preimage in $\Ch (A)$ of $\underline{\F}$ under the localisation $\Ch(A)\rightarrow \D(A)$. Note that $\overline{\F}$ consists of complexes whose fibrant replacement belongs to $\F$. 
    
    Let us first prove that $\overline{\F}$ is precovering in $\Ch(A)$. For any $X\in \Ch(A)$ one can consider the approximation sequence 
    \[
    0 \rightarrow X \xrightarrow{x} iX \rightarrow aX \rightarrow 0
    \]
    induced from the cotorsion pair $(\Ac,\dgI)$ and the map $iW\xrightarrow{\epsilon} iX\rightarrow 0$, with $iW \in \F$ given by the cotorsion pair $(\F,\T)$ in $\dgI$, where $iW$ is just the notation for some object of $\F$. Taking the pullback of $x$ and $\epsilon$ we get the diagram in $\Ch(A)$:

   \[\xymatrix{0 \ar[r] & W \ar[r]^{w} \ar[d]^{\tau} & iW \ar[r]^{} \ar[d]^{\epsilon} & aX \ar[r] \ar@{=}[d]  & 0  \\
0 \ar[r] & X \ar[r]^{x}  & iX \ar[r]^{}  & aX \ar[r]  & 0.}\]

Since $\epsilon$ is an epimorphism, the pullback square is a pushout square and the cokernel of $w$ is isomorphic to $aX$ justifying the notation. We see that $W\in \overline{\F}$. Let us check that $\tau$ is an  $\overline{\F}$-precover of $X$. For any map $g : W' \rightarrow X$ with $W'\in \overline{\F}$ we can consider the following diagram, where $w'$ is a $\dgI$-precover of $W'$:
  \[\xymatrix{ 
   W' \ar[r]^{w'} \ar@/_1pc/[ddr]^{g} \ar[dr]^{\kappa} & iW'  \ar@/^3pc/[ddr]^{t} \ar[dr]^{\gamma}  \\
 &  W \ar[r]^{} \ar[d]^{\tau} & iW  \ar[d]_{\epsilon}   \\
& X \ar[r]^{x}  & iX.   }\]
The map $t$ exists since $w'$ is a $\dgI$-precover and gives $tw'=xg$. The map $\gamma$ exists since $\epsilon$ is an $\F$-precover and gives $\epsilon \gamma= t$. The map $\kappa$ exists from the properties of the pullback and gives $g=\tau\kappa$. Proving that $\overline{\F}$ is precovering in $\Ch(A)$.  
    
    By \cite[Theorem 1.2]{ElBashir}, the class $\overline{\F}$ is covering in $\Ch (A)$ since it is closed under directed colimits (which follows from $\underline{\F}$ being closed under homotopy colimits) and precovering. 

    Let us now check that $\F$ is covering in $\dgI$. Assume $X\in \dgI$ and let
    \[
    0\rightarrow T'\rightarrow F' \xrightarrow{f'} X \rightarrow 0
    \]
    be the approximation sequence in $\dgI$ existing by completeness of $(\F, \T )$ and let $F \xrightarrow{f} X$ be the $\overline{\F}$-cover of $X$ with respect to $\overline{\F}$. Since $\F\subseteq \overline{\F}$, the morphism $f'$ factors through $f$ and $f$ is surjective. Note that $f'$ is an $\F$-precover in $\dgI$.

    Additionally, let us consider the approximation sequence $0 \rightarrow F \xrightarrow{i} iF \rightarrow aF \rightarrow 0 $, where $iF\in \dgI$ and $aF$ is acyclic, arising from the complete cotorsion pair $(\Ac,\dgI)$ in $\Ch(A)$. The morphism $F \xrightarrow{i} iF$ is a $\dgI$-preenvelope of $F$ with $iF\in \F$.

    \[\xymatrix{ &  F\ar[r]^{i} \ar[d]^{f} & iF \ar[dl]^{h} \ar@/_/[dll]_{w} \\
                F' \ar[r]^{f'} & X & }\] 

    The map $f$ factors as $hi$ since $X\in\dgI$, and the map $h$ factors as $f'w$, since $iF\in \F$. So $f=f'wi$. On the other hand $f'=fg$ for some $g: F'\rightarrow F$. We get $f=fgwi$. By minimality of $f$, the morphism $gwi$ is an isomorphism and $F$ is a direct summand of $F'$. Note that $F\in \dgI$ since $\dgI$ is closed under direct summands.

    By the universality of kernels we get that the kernel $T$ of $f$ is a direct summand of $T'$, so it belongs to $\T$ and $0\rightarrow T\rightarrow F \xrightarrow{f} X \rightarrow 0$ is the desired approximation sequence.
    \end{proof}

\section{Continuous colimits of silting objects}\label{SecContSilt}

In this section we will consider the situation similar to Section \ref{SecSilt} with the family of $n$-silting objects indexed not by $\mathbb{N}$ but by an arbitrary ordinal $\mu$. 

To make the results work for this more general type of diagrams we will need to make additional assumptions guaranteeing that the colimits taken as intermediate steps of the construction remain in the class of bifibrant objects. This will work, for example, if we consider the projective model structure in the category $\Ch(A)$ over a right perfect ring $A$, which ensures that any direct limit of projective modules is projective. This also works for the injective model structure over a right noetherian ring, which ensures that any direct limit of injective modules is injective.

Let $\mu$ be an ordinal. A direct system $((T_\alpha)_{\alpha<\mu},(f_{\alpha\beta})_{\alpha<\beta<\mu})$ is called a \emph{continuous direct $\mu$-system}
if, for each limit ordinal $\gamma<\mu$, we have $T_\gamma=\clim_{\alpha < \gamma } T_\alpha$ and $(f_{\alpha\gamma})_{\alpha<\gamma}$ are the canonical morphisms guaranteed by the definition of the colimit. If the colimit of the whole direct system exists, the canonical
morphism $T_0 \rightarrow \clim_{\alpha < \mu } T_\alpha$ is called the \emph{transfinite composition} of the direct $\mu$-system.

We will use the following version of the Eklof's lemma \cite[Lemma 9.3]{gillespie}. It holds in any exact category, in particular, in the category of chain complexes.

\begin{proposition}\cite[Lemma 9.3]{gillespie}
    Let $((T_\alpha)_{\alpha<\mu},(f_{\alpha\beta})_{\alpha<\beta<\mu})$ be a continuous direct $\mu$-system in $\Ch (A)$ such that all the morphisms $f_{\alpha\alpha+1}$ are
 monomorphisms. Let $T$ denote $\clim_{\alpha < \mu } T_\alpha$. Then $\Ext^1_{\Ch(A)}
(T,Y) = 0$ for any 
$Y \in \Ch(A)$ such that for each $\alpha < \mu$ \[\Ext^1_{\Ch(A)}
(T_0,Y) = 0 \text{ and }\Ext^1_{\Ch(A)}
(\Coker (f_{\alpha\alpha+1}),Y) = 0.\] 
\end{proposition}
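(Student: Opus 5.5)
We argue by transfinite induction on $\alpha\le\mu$, proving that $\Ext^1_{\Ch(A)}(T_\alpha,Y)=0$ for every $\alpha<\mu$ and also for $T=\clim_{\alpha<\mu}T_\alpha$. The standard device is to show that a given short exact sequence $0\to Y\to E\xrightarrow{\pi} T\to 0$ splits. We do this by building a compatible family of sections $\sigma_\alpha\colon T_\alpha\to E_\alpha$, where $E_\alpha:=\pi^{-1}(\mathrm{im}\, f_{\alpha\mu})$ is the pullback of $\pi$ along the canonical map $f_{\alpha\mu}\colon T_\alpha\to T$.

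First we reduce to the case where all canonical maps are monomorphisms. Since each $f_{\alpha\alpha+1}$ is a monomorphism and the system is continuous, transfinite induction shows that every $f_{\alpha\beta}$ is a monomorphism. At successor steps we compose monomorphisms. At a limit $\beta$ we use that $T_\beta$ is the directed colimit and that directed colimits in $\Ch(A)$ are exact, so they preserve monomorphisms. Hence every $f_{\alpha\mu}$ is a monomorphism, and we may regard the $T_\alpha$ as an increasing continuous chain of subcomplexes of $T$. This chain has union $T$, with $T_\gamma=\bigcup_{\alpha<\gamma}T_\alpha$ at limit ordinals $\gamma$.

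Now we construct the sections $\sigma_\alpha\colon T_\alpha\to E$ with $\pi\sigma_\alpha=\mathrm{id}$ on $T_\alpha$ and $\sigma_\beta|_{T_\alpha}=\sigma_\alpha$ for $\alpha<\beta$.

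\textbf{Start.} Pulling back the sequence along $T_0\hookrightarrow T$ gives an extension of $T_0$ by $Y$. This extension splits because $\Ext^1(T_0,Y)=0$, which yields $\sigma_0$.

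\textbf{Successor step.} Given $\sigma_\alpha$, pull back the sequence along $T_{\alpha+1}\hookrightarrow T$ to get $0\to Y\to E_{\alpha+1}\to T_{\alpha+1}\to 0$, and choose any section $s$ of it. The difference $\sigma_\alpha-s|_{T_\alpha}$ lands in $Y$, so it is a map $T_\alpha\to Y$. Applying $\Hom(-,Y)$ to $0\to T_\alpha\to T_{\alpha+1}\to \Coker(f_{\alpha\alpha+1})\to 0$ shows that $\Hom(T_{\alpha+1},Y)\to\Hom(T_\alpha,Y)$ is surjective, since $\Ext^1(\Coker f_{\alpha\alpha+1},Y)=0$. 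Extend the difference to $d\colon T_{\alpha+1}\to Y$ and set $\sigma_{\alpha+1}=s+d$.

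\textbf{Limit step.} At a limit $\gamma$, the maps $\sigma_\alpha$ for $\alpha<\gamma$ are compatible. Since $T_\gamma=\clim_{\alpha<\gamma}T_\alpha$, the universal property of the colimit glues them into $\sigma_\gamma$. The identity $\pi\sigma_\gamma=\mathrm{id}$ holds because it holds on each $T_\alpha$ and the colimit cocone is jointly epimorphic.

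Finally, the same gluing over all $\alpha<\mu$ gives $\sigma\colon T\to E$ with $\pi\sigma=\mathrm{id}_T$, so the sequence splits and $\Ext^1(T,Y)=0$. Everything above uses only the Yoneda description of $\Ext^1$ via short exact sequences, which is valid in $\Ch(A)$ and more generally in any exact category with the relevant exact colimits.

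The main obstacle is the reduction step: making sure the canonical maps into the colimit are monomorphisms, and that pullbacks along them behave compatibly. This relies on exactness of directed colimits, and in a general exact category this point would need care. The remaining steps are routine bookkeeping.
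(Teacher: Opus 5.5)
Your argument is correct. It is the standard transfinite proof of Eklof's lemma: build compatible splittings $\sigma_\alpha$, correct them at successor steps using surjectivity of $\Hom(T_{\alpha+1},Y)\to\Hom(T_\alpha,Y)$, and glue them at limit steps. The paper gives no proof of its own and simply cites Gillespie's version of this lemma.

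There are two small points to tidy up. First, the existence of the section $s$ at a successor step needs a word of justification. The class of $0\to Y\to E_{\alpha+1}\to T_{\alpha+1}\to 0$ restricts to zero on $T_\alpha$ because of $\sigma_\alpha$, so it comes from $\Ext^1_{\Ch(A)}(\Coker(f_{\alpha\alpha+1}),Y)=0$ and therefore vanishes. Second, the reduction to monomorphic $f_{\alpha\mu}$, which you call the main obstacle, is not needed. The same construction runs verbatim with the conditions $\pi\sigma_\alpha=f_{\alpha\mu}$ and $\sigma_\beta f_{\alpha\beta}=\sigma_\alpha$, and it uses only that the successor maps $f_{\alpha\alpha+1}$ are monomorphisms.
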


For the rest of the section we will use the notation $(\U_\alpha, \V_\alpha)$ for the t-structure $(\U_{T_\alpha},\V_{T_\alpha})$ and $(\F_\alpha, \T_\alpha)$ for the cotorsion pair $(\F_{T_\alpha},\T_{T_\alpha})$ for any ordinal $\alpha$. 

\begin{setup}\label{SetupSiltCont}
Let $A$ be right perfect or right noetherian. Let $\mu$ be a fixed limit ordinal. Let  $\{T_\alpha\}$ be a sequence of $n$-silting objects in $\D(A)$ parametrised by all non-limit ordinal $\alpha<\mu$ and such that $T_\beta \in \U_{\alpha}$ for any non-limit ordinals $\alpha,\beta$ with $\alpha<\beta<\mu$.    
\end{setup}

Let us construct a continuous direct $\mu$-system adding objects $T_\beta$ for limit ordinals $\beta$ and constructing the analogue of Diagram \ref{SiltingDiagram}. Assume that the diagram is constructed for all ordinals $\alpha<\beta$, that is for $\alpha<\beta$ we assume that 
\begin{enumerate}
    \item there exists an exact sequence in $\E=\dgP$ in case $A$ is right perfect (or in $\E=\dgI$ in case $A$ is right noetherian, respectively; in this case, as before, we will abuse the notation and denote the fibrant-cofibrant replacement of $A$ by $A$)
\[
0 \rightarrow  A \rightarrow T_{\alpha}
^{(1)} \rightarrow  T_{\alpha}^{(2)} \rightarrow \dots \rightarrow T_{\alpha}^{(n)}  \rightarrow 0,
\]
such that $T_{\alpha}^{(j)}\in \F_\alpha\cap \T_\alpha$ in case $\alpha$ is a non-limit ordinal;
\item there are maps $(f^{(j)}_{\alpha'\alpha} \colon T_{\alpha'} ^{(j)} \to T_{\alpha}^{(j)})_{\alpha'<\alpha<\beta}$ such that $f^{(j)}_{\alpha'\alpha'+1}$ is a special $\T_{\alpha'+1}$-preenvelope for each $1\leq j\leq n$ and $\alpha'+1<\beta$;

\item the equalities $f^{(j)}_{\alpha''\alpha}f^{(j)}_{\alpha'\alpha''}=f^{(j)}_{\alpha'\alpha}$ hold for all $\alpha'<\alpha''<\alpha<\beta$ and the squares formed by the horizontal maps and the maps $f^{(j)}_{\alpha'\alpha}$ for different values of $j$ commute; 
\item whenever $\alpha$ is a limit ordinal, we have $T_\alpha^{(j)}= \clim_{\alpha' < \alpha } T_{\alpha'}^{(j)}$ and $(f^{(j)}_{\alpha'\alpha})_{\alpha'<\alpha}$ are colimit maps;
\item the objects $T'_{\alpha}=\oplus_{j=1}^n T_{\alpha}^{(j)}$ 
are $n$-silting objects with $T'_\alpha \in \U_{\alpha'}$ for any $\alpha'< \alpha< \beta$ and $\Add(T'_\alpha)=\Add(T_\alpha)$ in case $\alpha$ is a non-limit ordinal.
\end{enumerate}

If $\beta$ is a successor ordinal, we proceed as in Corollary \ref{Cor: diagram Frob} applying Proposition \ref{SiltApp} to $S=T_{\beta-1}$ and $T=T_{\beta-1}$. The maps $f^{(j)}_{\alpha'\beta}$ are defined as $f^{(j)}_{\beta-1\beta}\circ f^{(j)}_{\alpha'\beta-1}$. In that case all the conditions listed above hold and the diagram is constructed for the ordinal $\beta$.

If $\beta$ is a limit ordinal, we define $T_\beta^{(j)}= \clim_{\alpha < \beta } T_\alpha^{(j)}$ and $(f^{(j)}_{\alpha\beta})_{\alpha<\beta}$ to be the colimit maps. All the necessary diagrams commute from the universal property of the colimit. In case $A$ is  right perfect each $T_\beta^{(j)}$ is an $n$-term complex of projective modules, so it belongs to $\E=\dgP$; if $A$ is right noetherian each $T_\beta^{(j)}$ is a bounded below complex of injective modules, so it belongs to $\E=\dgI$. The only condition left to check to finish the construction of the diagram for the ordinal $\beta$ is condition (5), which follows from the following lemma:

\begin{lemma}\label{TransFinIndStep}
    The object $T'_{\beta}=\oplus_{j=1}^n T_{\beta}^{(j)}$ is an n-silting object. Moreover, $\U_\beta=\cap_{\alpha<\beta}\U_\alpha$, in particular, $T'_{\beta} \in \U_{\alpha}$ holds for any $\alpha<\beta$.
\end{lemma}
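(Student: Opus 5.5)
We follow the proof of Theorem \ref{ThmSilt}, replacing the Milnor colimit by a reduction to a cofinal chain. The object $T'_\beta=\clim_{\alpha<\beta}T'_\alpha$ is also the homotopy colimit, because directed colimits in $\Ch(A)$ are exact. Set $\U:=\cap_{\alpha<\beta}\U_\alpha$; since successor ordinals are cofinal below $\beta$ and the $\U_\alpha$ are nested, it suffices to intersect over non-limit $\alpha$. The plan has four steps.

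\textbf{Step 1: $T'_\beta\in\U$.} For each fixed $\alpha<\beta$, the tail system $(T'_{\alpha'})_{\alpha\le\alpha'<\beta}$ is cofinal and has the same colimit. Every term of this tail lies in $\U_\alpha$: this is condition (5) for $\alpha'>\alpha$, and for $\alpha'=\alpha$ it follows from $\Add(T'_\alpha)=\Add(T_\alpha)$, or in the limit case from $\U_\alpha=\cap_{\alpha''<\alpha}\U_{\alpha''}$ via the inductive hypothesis. Aisles in $\D(A)$ are closed under directed homotopy colimits, so $T'_\beta\in\U_\alpha$.

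\textbf{Step 2: $T'_\beta$ is a generator.} Exactness of directed colimits applied to the exact rows gives an exact sequence $0\to A\to T^{(1)}_\beta\to\dots\to T^{(n)}_\beta\to 0$ in $\Ch(A)$. Splicing it into $n-1$ triangles shows that $A\in\thick(T'_\beta)$, as in the proof of Theorem \ref{ThmSilt}.

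\textbf{Step 3: $\Hom_{\D(A)}(T'_\beta,X[>0])=0$ for all $X\in\U$.} This is the analogue of Lemma \ref{Ext-projective} and is the main obstacle, since $\beta$ may have uncountable cofinality and the Milnor triangle is not available. I would try two routes.

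The first route is to choose a cofinal well-ordered subsystem of successor ordinals. If $\mathrm{cf}(\beta)=\omega$, pick successor ordinals $\alpha_0<\alpha_1<\dots$ cofinal in $\beta$. The composites $T'_{\alpha_k}\to T'_{\alpha_{k+1}}$ are $\U_{\alpha_{k+1}}$-preenvelopes, because a composite of preenvelopes into a decreasing chain of aisles is again a preenvelope for the smaller aisle. The proof of Lemma \ref{Ext-projective} then applies verbatim. For general cofinality this route fails.

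The second, uniform route works inside $\E$ with Eklof's lemma. Reduce to a successor-indexed continuous chain and fix $U$ in the preimage $\T$ of $\U$ in $\E$; we want $\Ext^1_\E(\Coker f_{\alpha\alpha+1},U)=0$, where $\Ext^1_\E$ computes $\Hom_{\D(A)}(-,U[1])$. At successor steps the maps $f^{(j)}_{\alpha\alpha+1}$ are special $\T_{\alpha+1}$-preenvelopes, so their cokernels lie in $\F_{\alpha+1}$, and $U\in\T\subseteq\T_{\alpha+1}$ gives the required vanishing. The start $T'_0$ lies in $\F_0$, so $\Ext^1(T'_0,U)=0$ as well. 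At limit steps the continuity of the system is exactly the hypothesis of Eklof's lemma, which then gives $\Ext^1(T'_\beta,U)=0$. The delicate point is that this argument needs $\Ext^1_{\Ch(A)}$ to agree with $\Ext^1_\E$ and Hom in $\D(A)$. I would secure this by using that $\E$ is closed under the colimits in question (right perfect or right noetherian) and that $\E$-admissible sequences are degreewise split. Applying the same argument to the shifts $X[k]$, $k\ge 0$, which stay in $\U$, gives all higher vanishing.

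\textbf{Step 4: conclude.} As in the proof of Theorem \ref{ThmSilt}, let $\U'$ be the aisle generated by $T'_\beta$. By Step 1, $\U'\subseteq\U$. By Steps 2 and 3 together with \cite[Remark 3]{NSZ}, $T'_\beta$ is silting with $\U_{T'_\beta}=\U'$. Step 3 also gives $\U\subseteq\U_{T'_\beta}$, so $\U_\beta=\U$. Finally, $\U$ is $n$-intermediate because each $\U_\alpha$ is, and by \cite[Proposition 4.17]{PV} and \cite[Lemma 4.5]{AMVSiltMod} $T'_\beta$ is $n$-silting. The inclusion $T'_\beta\in\U_\alpha$ for all $\alpha<\beta$ is Step 1.
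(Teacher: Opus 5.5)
Your proposal is correct and follows the paper's proof: the paper also applies Eklof's lemma to the continuous system in $\Ch(A)$, using that the maps $f_{\alpha\alpha+1}$ are special $\T_{\alpha+1}$-preenvelopes and that $T_0^{(j)}\in\F_0$, to get $\Hom_{\D(A)}(T'_\beta, Y[>0])=0$ for $Y\in\U$, and then reruns the proof of Theorem~\ref{ThmSilt} with this in place of Lemma~\ref{Ext-projective}. Your countable-cofinality route is unnecessary. Your justification of $\Ext^1_{\Ch(A)}(T'_\beta,Y)\cong\Hom_{\D(A)}(T'_\beta,Y[1])$, via $T'_\beta\in\E$ and degreewise splitness, spells out an identification the paper uses without comment.
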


\begin{proof}
   Let $\U$ be the intersection $\cap_{\alpha< \beta}\U_{\alpha}$. Recall that $\T_{\alpha}$ denotes the preimage of $\U_{\alpha}$ in the category $\E$ ($\E=\dgP$ or $\E=\dgI$, respectively). Let us denote the preimage of $\U$ by $\T=\cap_{\alpha< \beta}\T_{\alpha}$. We want to apply Eklof's lemma to the system $((T_{\alpha}^{(j)})_{\alpha<\beta}, (f^{(j)}_{\alpha'\alpha})_{\alpha'<\alpha<\beta})$ and any object $Y\in \T$.  
   All the maps $f_{\alpha\alpha+1}$ are monomorphisms in $\Ch(A)$ by construction, since they are admissible monos in the corresponding subcategory of bifibrant objects. Let $Y$ be any object in $\T$. We have $\Ext^1_{\Ch(A)}
(\Coker (f_{\alpha\alpha+1}),Y) = 0$ for each $\alpha < \beta$ since by construction $f_{\alpha\alpha+1}$ is a special $\T_{\alpha+1}$-preenvelope and $Y\in \T_{\alpha+1}$. The same holds for $T_0^{(j)}$ since $Y\in \T_0$. We get $\Ext^1_{\Ch(A)}
(T_{\beta}^{(j)},Y) = 0$.

Passing to $\D(A)$ we conclude that $\Hom_{\D(A)}(T'_{\beta}, Y[>0])=0$ for any $Y\in \U$, since \[\Hom_{\D(A)}(T'_{\beta}, Y[1])=\Ext^1_{\Ch(A)}
(T'_{\beta},Y)=0\] and $\U$ is closed under positive shifts.
Replacing the use of Lemma \ref{Ext-projective} in the proof of Theorem \ref{ThmSilt} by the above argument we get that $T'_{\beta}$ is an $n$-silting object whose aisle $\U_\beta$ coincides with $\U=\cap_{\alpha<\beta}\U_\alpha$.
\end{proof}

By transfinite induction and by Lemma \ref{TransFinIndStep} we get the following theorem.

\begin{theorem}\label{contcolim}
   Suppose we are in Setup \ref{SetupSiltCont}. Then there exists a continuous direct $\mu$-system $((T'_\alpha)_{\alpha<\mu},(f_{\alpha\beta})_{\alpha<\beta<\mu})$ in $\Ch (A)$ with $\Add(T_\alpha)=\Add(T'_\alpha)$ for all non-limit ordinals $\alpha<\mu$ and such that $T_{\mu}=\clim_{\alpha < \mu } T'_\alpha$ is an $n$-silting object in $\D(A)$ with $T_\mu\simeq \hclim_{\alpha < \mu } T'_\alpha$ and $\U_{T_\mu}=\cap_{\alpha<\mu}\U_\alpha$.
\end{theorem}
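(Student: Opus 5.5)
We build the system by transfinite recursion exactly as set up before the theorem. For every $\beta<\mu$ we construct complexes $T_\beta^{(j)}$ and maps $f^{(j)}_{\alpha\beta}$ satisfying conditions (1)--(5).

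The successor step applies Proposition \ref{SiltApp} to $S=T'_{\beta-1}$ and $T=T_\beta$. The hypothesis $T_\beta\in\U_{\beta-1}$ holds: if $\beta-1$ is non-limit this is Setup \ref{SetupSiltCont}; if $\beta-1$ is a limit, condition (5) together with Lemma \ref{TransFinIndStep} gives $\U_{\beta-1}=\cap_{\alpha<\beta-1}\U_\alpha\supseteq\U_\beta$, since $T_\beta\in\U_\alpha$ for all non-limit $\alpha<\beta$. For successors of limits one first has to check $T_\beta\in \U_{\alpha}$ for every earlier non-limit $\alpha$, which is immediate from the setup.

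The limit step is the colimit construction. Membership in $\E$ comes from right perfectness (direct limits of projectives are projective, and the complexes remain bounded in $[-n+1,0]$) or from right noetherianity (direct limits of injectives are injective, and the complexes are bounded below). Lemma \ref{TransFinIndStep} then gives condition (5). Setting $T'_\alpha=\oplus_j T^{(j)}_\alpha$ and $f_{\alpha\beta}=\oplus_j f^{(j)}_{\alpha\beta}$ produces a continuous direct $\mu$-system with $\Add(T'_\alpha)=\Add(T_\alpha)$ for non-limit $\alpha$.

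For the final object $T_\mu=\clim_{\alpha<\mu}T'_\alpha$ we repeat the argument of Lemma \ref{TransFinIndStep} with $\beta$ replaced by $\mu$; that lemma only used that its index was a limit ordinal. In detail:
\begin{enumerate}
\item Exactness of directed colimits in $\Ch(A)$ gives $T_\mu\simeq\hclim_{\alpha<\mu}T'_\alpha$.
\item It also gives an exact sequence $0\to A\to T_\mu^{(1)}\to\dots\to T_\mu^{(n)}\to 0$, so $A$ lies in the thick closure of $T_\mu$ and $T_\mu$ is a generator.
\item Eklof's lemma, applied with any $Y\in\cap_{\alpha<\mu}\T_\alpha$, gives $\Ext^1(T_\mu^{(j)},Y)=0$. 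Here the transition maps are monomorphisms whose cokernels lie in $\F_{\alpha+1}$, and $T_0^{(j)}\in\F_0$. Since $\D(A)\simeq\underline{\E}$, this yields $\Hom(T_\mu,\U[>0])=0$ for $\U=\cap_{\alpha<\mu}\U_\alpha$.
\item $T_\mu\in\U_\alpha$ for each $\alpha$, because $T_\mu$ is the homotopy colimit of the cofinal tail starting at $\alpha$ and aisles are closed under directed homotopy colimits.
\end{enumerate}
The argument of Theorem \ref{ThmSilt}, using \cite[Remark 3]{NSZ}, now shows $T_\mu$ is silting with $\U_{T_\mu}=\U$. It is $n$-silting because $\U$ is $n$-intermediate.

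The main obstacle is the Ext computation. $\Ext^1$ in $\Ch(A)$ must agree with $\Hom_{\D(A)}(-,-[1])$, and Eklof's hypotheses must be verified in $\Ch(A)$ rather than in $\E$. For $\dgP$, $\Ext^1_{\Ch(A)}(P,Y)=\Hom_{\K}(P,Y[1])$ when $P$ is DG-projective, and the cokernels of the $f_{\alpha\alpha+1}$ are in $\dgP$; the $\dgI$ case is handled dually, using that $Y\in\dgI$. I would first reduce to the computation $\Ext^1_{\Ch(A)}(X,Y)\cong\Hom_{\D(A)}(X,Y[1])$ for $X\in Q$ and $Y\in R$, which holds in any hereditary abelian model structure.
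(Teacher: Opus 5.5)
Your proposal is correct and follows the same route as the paper: transfinite recursion with Proposition \ref{SiltApp} at successor steps, colimits together with Lemma \ref{TransFinIndStep} (Eklof's lemma plus the argument of Theorem \ref{ThmSilt}) at limit steps, and the same argument once more at $\mu$ itself. You also make explicit two points the paper leaves implicit: that the successor step applies Proposition \ref{SiltApp} with $S=T'_{\beta-1}$ and $T=T_\beta$, and that $\Ext^1_{\Ch(A)}(X,Y)$ agrees with $\Hom_{\D(A)}(X,Y[1])$ for cofibrant $X$ and fibrant $Y$.
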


\section{Continuous limits of cosilting objects of cofinite type}\label{SecCoCont}
In this section we wish to dualise the results of Section \ref{SecContSilt}. Our strategy is to use a duality between $\D(A)$ and $\D(A^{op})$ that will allow us to use the results of the previous section directly. We chose this strategy since to the best of our knowledge the subcategory of bifibrant objects is rarely closed under appropriate inverse limits. Choosing a commutative ring $k$ such that $A$ is a $k$-algebra and an injective cogenerator $W$ of $\Mod k$ (e.g.~$k=\mathbb{Z}$, $W=\mathbb{Q}/\mathbb{Z}$), we consider the exact functor $(-)^+ \colon \Ch(A) \to \Ch(A^{op})$ obtained by applying the functor $\mathrm{Hom}_k(-, W)$ pointwise. We use the same notation for the corresponding derived functor 
    \[(-)^+ := \mathbf{R}\mathrm{Hom}_k(-, W) \: \colon\: \D(A) \to \D(A^{op})\]
and for the analogous functor in the other direction $(-)^+ := \mathbf{R}\mathrm{Hom}_k(-, W) \: \colon\: \D(A^{op}) \to \D(A)$. 

We say that an n-cosilting complex $C$ is of \emph{cofinite type} if there is a set $\mathcal{S}$ of compact objects in $\D(A)$ such that $\mathcal{V}^C = \mathcal{S}^{\perp}$. 
Let us summarise the results of \cite{AHpara}, which we are going to use, see \cite[Sec.~7]{BirdWill} for a more general treatment. The fact that the bijection between bounded silting and cosilting complexes of cofinite type restricts to complexes of projectives and injectives concentrated in $n$ degrees follows since $\mathrm{Hom}_k(-, W)$ is exact and sends projective modules to injective modules.

\begin{theorem}\label{BijCofiniteType}\cite[Lemma 2.3, Lemma 2.5, Theorem 3.1, Theorem 3.3]{AHpara}
Let $A$ be a ring. The duality $(-)^+$ induces a bijection between equivalence classes of $n$-silting complexes in $\D(A^{op})$  and equivalence classes of $n$-cosilting complexes of cofinite type in $\D(A)$. Moreover, the aisle and the coaisle of the corresponding t-structures are related as follows: Let $T \in \D(A^{op})$ be an $n$-silting complex, then
\begin{enumerate}
    \item For every $X\in \D(A^{op})$ we have that $X\in \U_T$ if and only if $X^+ \in \V^{T^+}$.
    \item For every $Y\in \D(A)$ we have that $Y\in \V^{T^+}$ if and only if $Y^+ \in \U_T$.
\end{enumerate}
\end{theorem}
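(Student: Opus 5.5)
This theorem is a compilation of results from \cite{AHpara}, so the plan is to assemble it rather than reprove it from scratch. The core tool is the adjunction-type isomorphism
\[
\Hom_{\D(A)}(Y, X^+) \cong \Hom_{\D(A^{op})}(X, Y^+)
\]
for $X \in \D(A^{op})$ and $Y \in \D(A)$. This holds because both sides compute $H^0$ of $\mathbf{R}\mathrm{Hom}_k(X\otimes^{\mathbf{L}}_A Y, W)$, and $W$ is an injective cogenerator of $\Mod k$.

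\textbf{Step 1: $T^+$ is an $n$-cosilting complex.} Let $T$ be an $n$-silting complex of projective $A^{op}$-modules in degrees $[-n+1,0]$. Since $\mathrm{Hom}_k(-,W)$ is exact and sends projectives to injectives, $T^+$ is a complex of injective $A$-modules in degrees $[0,n-1]$. We check the two conditions characterising $n$-cosilting complexes.
\begin{enumerate}
\item \emph{Vanishing condition.} We have $(T^+)^J \cong (T^{(J)})^+$. Using the adjunction,
\[
\Hom(T^{(J)+}, T^+[i]) \cong \Hom(T, T^{(J)++}[i]).
\]
Since $T$ is compact (finitely generated projective components), we can relate $\Hom(T, -)$ on the double dual $T^{(J)++}$ to $\Hom(T,T^{(J)}[i])^{++}$. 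The latter vanishes for $i>0$ by the silting vanishing condition.
\item \emph{Generation condition.} $\tria(\Prod T^+)=\K^b(\Inj A)$ follows by dualising the resolution building $A$ from $\Add T$. This gives $A^+$, an injective cogenerator, built from $\Prod T^+$ in finitely many steps.
\end{enumerate}

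\textbf{Step 2: cofinite type.} We show $\V^{T^+} = \mathcal{S}^\perp$ for $\mathcal{S}$ the compact objects in $\U_T$ (with suitable shifts, using the $\D(A)$ version via the dual of compacts). For compact $S$ one has $\Hom(S, Y) \cong \Hom(Y^+, S^{*+})$-type formulas. This is the technical heart of \cite[Theorem 3.1]{AHpara}, and I expect it to be the main obstacle. The plan is to reduce, via the perfect duality $\mathbf{R}\mathrm{Hom}_A(-,A)$ on $\K^b(\proj)$, the statement "$\U_T$ is generated by compacts" to the fact that silting aisles in $\D(A^{op})$ are compactly generated modulo the definability results of \cite{AHpara}.

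\textbf{Step 3: items (1) and (2).} These follow from the adjunction together with cogeneration by $W$.
\begin{itemize}
\item[(1)] $X^+\in\V^{T^+}$ iff $\Hom(X^+, T^+[>0])=0$ iff $\Hom(T, X^{++}[>0])=0$. Since $X$ is a pure submodule of $X^{++}$ and aisles of silting t-structures are definable, this holds iff $X\in\U_T$.
\item[(2)] $Y\in\V^{T^+}$ iff $\Hom(Y,T^+[>0])\cong\Hom(T,Y^+[>0])=0$ iff $Y^+\in\U_T$. This is direct.
\end{itemize}

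\textbf{Step 4: bijection on equivalence classes.} Injectivity follows from (2), since the coaisle determines the class. For surjectivity, given a cofinite-type cosilting $C$ with $\V^C=\mathcal{S}^\perp$, take the silting object over $A^{op}$ generating the compactly generated aisle obtained from the duals $\mathcal{S}^*$. Then identify its dual with $C$ through (2).
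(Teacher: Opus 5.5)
\textbf{Gap in Step~1: $T$ need not be compact.} The claim ``$T$ is compact (finitely generated projective components)'' is false in this setting. An $n$-silting complex here is a complex of \emph{arbitrary} projective modules in degrees $[-n+1,0]$, and it is typically not compact. The objects produced by Theorem~\ref{ThmSilt}, for instance the Lukas tilting module of Example~\ref{Kronecker}, are of exactly this kind, and they are why the paper needs this theorem. So your passage from $\Hom(T,T^{(J)++}[i])$ to $\Hom(T,T^{(J)}[i])^{++}$ is unjustified, and the vanishing condition for $T^+$ is not proved.

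The missing ingredient is that bounded silting complexes are of \emph{finite type} (Marks--Vit\'oria \cite{MVSiltCosilt}): $\U_T=\mathcal{S}^{\perp}$ for a set $\mathcal{S}\subseteq\W_T$ of compact objects. Your evaluation isomorphism should be applied to these $S$, not to $T$. With $S^*=\mathbf{R}\mathrm{Hom}_{A^{op}}(S,A)$ one has $\Hom(S,Y^+)\cong\Hom(S^*,Y)^+$ for $Y\in\D(A)$. Applying this twice gives $\Hom(S,Z^{++})\cong\Hom(S,Z)^{++}$, so $Z\in\U_T$ if and only if $Z^{++}\in\U_T$. This one fact repairs three steps:
\begin{itemize}
\item Taking $Z=T^{(J)}\in\U_T$ gives the vanishing $\Hom((T^+)^J,T^+[>0])=0$ needed in Step~1.
\item It is precisely the ``definability'' you invoke in Step~3(1).
\item It yields $\V^{T^+}=\{S^*\mid S\in\mathcal{S}\}^{\perp}$, which is cofinite type, so Step~2 follows directly.
\end{itemize}

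\textbf{Steps~2 and~4 aim at the wrong property.} Silting aisles are \emph{not} compactly generated in general. For the Lukas tilting module $L$, the generic module lies in $\mathrm{Gen}\,L$ but receives no nonzero maps from the finitely presented modules in $\mathrm{Gen}\,L$. Compact generation would therefore put it in $L^{\perp}$, which is impossible. Likewise, the relevant compacts are not ``the compact objects in $\U_T$'' but the compacts in ${}^{\perp}\U_T$ that cut $\U_T$ out as a right orthogonal.

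For the same reason, surjectivity cannot use ``the silting object generating the compactly generated aisle obtained from $\mathcal{S}^*$''. Given $\V^C=\mathcal{S}^{\perp}$, the candidate aisle in $\D(A^{op})$ is $\{S^*\mid S\in\mathcal{S}\}^{\perp}=\{X\mid X^+\in\V^C\}$. This class is $n$-intermediate because $X\in\D^{\leq -k}$ if and only if $X^+\in\D^{\geq k}$. To know that a silting $T$ with this aisle exists, you need the converse half of Marks--Vit\'oria: every intermediate right orthogonal of a set of compacts is a silting class. Then (2) gives $\V^{T^+}=\V^C$.

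Injectivity should be deduced from (1), or from (2) together with $X\in\U_T\Leftrightarrow X^{++}\in\U_T$; (2) alone does not suffice.

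\textbf{Comparison with the paper.} The paper does not reprove any of this. It cites \cite{AHpara} for the bounded statement and adds only the observation already in your Step~1: $\mathrm{Hom}_k(-,W)$ is exact and sends projectives to injectives, so the bijection restricts to $n$-term complexes.
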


Let $\mu$ be an ordinal. Recall that a $\mu^{op}$-shaped diagram $((C_\alpha)_{\alpha<\mu},(g_{\beta\alpha})_{\alpha<\beta<\mu})$ is a \emph{continuous inverse $\mu$-system} if, for each limit ordinal $\gamma<\mu$, we have $C_\gamma = \lim_{\alpha < \gamma } C_\alpha$ and the morphisms $(g_{\gamma\alpha})_{\alpha < \gamma}$ are the canonical morphisms given by the limit.

For the rest of the section we will use the notation $\V_\alpha$ for the class $\V^{C_\alpha}$ for any ordinal $\alpha$.

\begin{setup}\label{SetupCosiltCont}
Let $A$ be left perfect. Let $\mu$ be a fixed limit ordinal. Let  $\{C_\alpha\}$ be a sequence of $n$-cosilting objects of cofinite type in $\D(A)$ parametrised by all non-limit ordinals $\alpha<\mu$ and such that $C_\beta \in \V_{\alpha}$ for any non-limit ordinals $\alpha,\beta$ with $\alpha<\beta<\mu$.    
\end{setup}

\begin{theorem}\label{ThmCoCont}
   Suppose we are in Setup \ref{SetupCosiltCont}. Then there exists a continuous inverse $\mu$-system $((C'_\alpha)_{\alpha<\mu},(g_{\beta\alpha})_{\alpha<\beta<\mu})$ in $\Ch (A)$ with $\Prod(C_\alpha)=\Prod(C'_\alpha)$ for all non-limit ordinals $\alpha<\mu$ and such that $C_{\mu}:=\lim_{\alpha < \mu } C'_\alpha$ is an $n$-cosilting object of cofinite type in $\D(A)$ with $C_{\mu}\simeq \hlim_{\alpha < \mu } C'_\alpha$ and $\V^{C_\mu}=\cap_{\alpha<\mu}\V_\alpha$.
\end{theorem}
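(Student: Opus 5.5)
We will reduce Theorem \ref{ThmCoCont} to Theorem \ref{contcolim} over the ring $A^{op}$ via the duality $(-)^+$ of Theorem \ref{BijCofiniteType}. Since $A$ is left perfect, $A^{op}$ is right perfect, so Setup \ref{SetupSiltCont} is available for $A^{op}$ with the projective model structure and $\E=\dgP$.

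\textbf{Step 1: dualise the input.} By Theorem \ref{BijCofiniteType}, for each non-limit $\alpha<\mu$ choose an $n$-silting complex $T_\alpha\in\D(A^{op})$ with $T_\alpha^+$ equivalent to $C_\alpha$, so $\Prod(T_\alpha^+)=\Prod(C_\alpha)$ and $\V^{T_\alpha^+}=\V_\alpha$. We check that Setup \ref{SetupSiltCont} holds. Let $\alpha<\beta$ be non-limit. Then $T_\beta^+\in\V^{C_\beta}\subseteq\V_\alpha=\V^{T_\alpha^+}$. Since $T_\beta^{++}$ is the double dual, we use Theorem \ref{BijCofiniteType}(1) in the direction ``$X^+\in\V^{T_\alpha^+}\Rightarrow X\in\U_{T_\alpha}$'' with $X=T_\beta$, which gives $T_\beta\in\U_{T_\alpha}$.

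\textbf{Step 2: build and dualise the direct system.} Theorem \ref{contcolim} gives a continuous direct $\mu$-system $(T'_\alpha,f_{\alpha\beta})$ in $\Ch(A^{op})$ with the following properties:
\begin{itemize}
\item each $T'_\alpha$ is a bounded complex of projectives in degrees $[-n+1,0]$ (the transfinite construction keeps every term an $n$-term complex of projectives, because $A^{op}$ is right perfect);
\item $\Add(T'_\alpha)=\Add(T_\alpha)$ for non-limit $\alpha$;
\item $T=\clim T'_\alpha$ is $n$-silting with $\U_T=\bigcap\U_{T_\alpha}$.
\end{itemize}
Set $C'_\alpha:=(T'_\alpha)^+$, computed termwise with $\Hom_k(-,W)$, and $g_{\beta\alpha}:=f_{\alpha\beta}^+$. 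The functor $\Hom_k(-,W)$ turns colimits into limits. Hence $(C'_\alpha)$ is a continuous inverse $\mu$-system, and $\lim C'_\alpha=(\clim T'_\alpha)^+=T^+=:C_\mu$. Since $T'_\alpha$ consists of projectives, $C'_\alpha$ is a complex of injectives in degrees $[0,n-1]$, so the underived dual computes $\mathbf{R}\Hom_k$. The duality respects $\Add/\Prod$ in the sense of \cite{AHpara}, which gives $\Prod(C'_\alpha)=\Prod(C_\alpha)$.

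\textbf{Step 3: identify $C_\mu$.} By Theorem \ref{BijCofiniteType}, $C_\mu=T^+$ is an $n$-cosilting complex of cofinite type. For $Y\in\D(A)$ we have $Y\in\V^{T^+}$ iff $Y^+\in\U_T=\bigcap\U_{T_\alpha}$, iff $Y\in\V^{T_\alpha^+}=\V_\alpha$ for every $\alpha$, by part (2) used twice. This gives $\V^{C_\mu}=\bigcap\V_\alpha$; it suffices to intersect over non-limit $\alpha$, because the limit stages have aisles equal to intersections by Lemma \ref{TransFinIndStep}.

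\textbf{Step 4: homotopy limit.} By Theorem \ref{Thm: diag model structure}, it remains to show that the diagram $(C'_\alpha)$ is fibrant in the injective model structure on $\Ch(A)^{\mu^{op}}$. We check the three conditions in turn.
\begin{itemize}
\item $C'_0$ is a bounded below complex of injectives, hence DG-injective.
\item For successor ordinals, $f_{\alpha,\alpha+1}$ is a monomorphism whose cokernel is a bounded complex of projectives. Dualising gives a surjection whose kernel is a bounded complex of injectives, hence in $\dgI$, so each map is a fibration.
\item At limit ordinals the system is continuous, so the third condition holds automatically.
\end{itemize}
Hence $\hlim C'_\alpha\simeq\lim C'_\alpha=C_\mu$.

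The main obstacle is Step 4 together with the claim in Step 2 that $(-)^+$ on chain complexes agrees with the derived dual. This needs every $T'_\alpha$, including those at limit stages, to be a bounded complex of projectives, and every transition map to be a degreewise split mono with projective cokernel. I would verify both from the transfinite construction. Degreewise splitness holds because the maps are admissible monos in $\dgP$, and cokernels of such maps in $\dgP$ have projective terms.
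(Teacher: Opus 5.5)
Your proposal is correct and follows the paper's proof essentially step for step. You pass to $n$-silting complexes over the right perfect ring $A^{op}$ via Theorem \ref{BijCofiniteType}, apply Theorem \ref{contcolim}, dualise the continuous direct system termwise with $(-)^+$, and identify $\lim_{\alpha<\mu} C'_\alpha$ with the homotopy limit by checking that the continuous inverse system is fibrant; your Step 3 just spells out what the paper treats as a direct consequence of Theorem \ref{BijCofiniteType}.

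One small correction: the termwise dual computes $\mathbf{R}\Hom_k(-,W)$ for every complex, because $W$ is an injective $k$-module. So boundedness of the $T'_\alpha$ is needed only for the fibrancy check, and the paper also takes that boundedness from the construction in Section \ref{SecContSilt}.
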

\begin{proof} In the proof of this theorem we will work with the injective model structure in $\Ch(A)$ and the projective model structure in $\Ch(A^{op})$.
By Theorem \ref{BijCofiniteType}, there exists a sequence of silting objects $\{T_\alpha\}$ in $\D(A^{op})$ indexed by non-limit ordinals $\alpha<\mu$ with $T_\beta \in \mathcal{U}_\alpha$ for $\alpha <\beta <\mu$ such that $\Add (T_\alpha^+) = \Add (C_\alpha)$. 
 In other words, the set $\{T_\alpha\}$ satisfies the conditions of Setup \ref{SetupSiltCont} for the right perfect ring $A^{op}$. 
By Theorem \ref{contcolim}, there exists a continuous direct $\mu$-system $((T_\alpha')_{\alpha<\mu}, (f_{\alpha\beta})_{\alpha<\beta<\mu})$ in $\Ch(A^{op})$ such that $\Add(T_\alpha)= \Add(T_\alpha')$ for every non-limit ordinal $\alpha<\mu$ and $T_\mu= \clim_{\alpha<\mu} T_\alpha'$ is an $n$-silting object with $\U_{T_\mu} = \bigcap_{\alpha<\mu} \U_\alpha$.     
It follows directly from Theorem \ref{BijCofiniteType} that $C_\mu:=T_\mu^+$ is an $n$-cosilting object of cofinite type in $\D(A)$ with $\V^{T_\mu^+} = \cap_{\alpha<\mu}\V_\alpha$. We wish to show that $T_\mu^+$ is isomorphic to the limit described in the statement of the theorem.

We observe that $((C'_\alpha)_{\alpha<\mu},(g_{\beta\alpha})_{\alpha<\beta<\mu}):= ((T_\alpha'^+)_{\alpha<\mu}, (f^+_{\alpha\beta})_{\alpha<\beta<\mu})$ is a continuous inverse $\mu$-system in $\Ch(A)$ with $\Prod(C_\alpha) = \Prod(C_\alpha')$ for every non-limit ordinal $\alpha < \mu$. Continuity follows since $(-)^+$ sends colimits to limits in a canonical way. We also have that $\Prod(C_\alpha)=\Prod(C_\alpha')$ because $\Add(T_\alpha)= \Add(T_\alpha')$ and the bijection induced by $(-)^{+}$ sends equivalent silting objects to equivalent cosilting objects.

Finally we observe that $T_\mu^+ \cong \hlim_{\alpha<\mu} C_\alpha'$. Indeed, in the category of chain complexes, we have that $T_\mu^+ \cong  \lim_{\alpha<\mu} C_\alpha'$ because $(-)^+$ sends colimits to limits. Moreover, the construction of  $((T_\alpha')_{\alpha<\mu}, (f_{\alpha\beta})_{\alpha<\beta<\mu})$ implies that the continuous inverse $\mu$-system $((C'_\alpha)_{\alpha<\mu},(g_{\beta\alpha})_{\alpha<\beta<\mu})$ is such that each $C'_\alpha$ is an $n$-term complex of injectives and all the maps $g_{\alpha+1 \alpha }$ are epimorphisms with fibrant kernels (with respect to the injective model structure on $\Ch(A)$). In Section \ref{SecHomLimColim}, we saw that this implies that $((C'_\alpha)_{\alpha<\mu},(g_{\beta\alpha})_{\alpha<\beta<\mu})$ is a fibrant object of $\Ch(A)^{\mu^{op}}$. Therefore $\hlim_{\alpha<\mu} C_\alpha'$ coincides with $\lim_{\alpha<\mu} C_\alpha'$ considered as an object of $\D(A)$. 
\end{proof}

\section{Application: Numerical torsion pairs and tame algebras}\label{SecApp}

Let us start by briefly recalling the context in which the wall and chamber structure of the real Grothendieck group and the $g$-vector fan of a finite dimensional algebra are studied. 

In this section we will assume that $A$ is a basic finite-dimensional algebra over an algebraically closed field $k$. All silting complexes considered in this section will be two-term. The rank of the Grothendieck group of $A$ will be denoted by $l$. The isomorphism classes of indecomposable projective modules $P_1,\dots,P_l$ with $A\simeq \oplus_{i=1}^l P_i $ form a basis of the Grothendieck group $\Ko_0(\proj A)$ and the isomorphism classes of indecomposable simple modules $S_1,\dots,S_l$, where $S_i$ denotes the top of $P_i$, form a basis of the Grothendieck group $\Ko_0(\fmod A)$.  There is a $\mathbb{Z}$-bilinear form 
\[
\Ko_0(\proj A) \times \Ko_0(\fmod A) \rightarrow  \mathbb{Z}
\]
defined by $\langle P_i , S_j \rangle=\delta_{i,j}$, which is called the \emph{Euler form}. 

For simplicity throughout this section we will denote $\K^b(\proj A)$ by $\K^b(A)$ and $\D^b(\fmod A)$ by $\D^b(A)$, their Grothendieck groups will be denoted by $\Ko_0(\K^b(A))$ and $\Ko_0(\D^b(A))$, respectively. Traditionally the elements of $\Ko_0(\K^b(A))$ are called \emph{$g$-vectors}, so for an object $P\in \K^b(A)$ its class $[P]\in \Ko_0(\K^b(A))$ will be called the \emph{$g$-vector of $P$}. Since $\Ko_0(\proj A)\simeq \Ko_0(\K^b(A))$ and $\Ko_0(\fmod A)\simeq \Ko_0(\D^b(A))$ we get a $\mathbb{Z}$-bilinear form
\[
\Ko_0(\K^b(A)) \times \Ko_0(\D^b(A)) \rightarrow  \mathbb{Z}.
\]
One can show that
\[
\langle P , X \rangle = \sum_{n \in \mathbb{Z}} (-1)^n \dim_k \Hom_{\D^b(A)}(P,X[n]).
\]

We can consider the real Grothendieck groups $\Ko_0(\proj A) \otimes_\mathbb{Z} \mathbb{R} \simeq \mathbb{R}^l$ and $\Ko_0(\fmod A) \otimes_\mathbb{Z} \mathbb{R} \simeq \mathbb{R}^l$ and extend the Euler form $\mathbb{R}$-linearly to an $\mathbb{R}$-bilinear form
\[
[\Ko_0(\proj A) \otimes_\mathbb{Z} \mathbb{R} ] \times [\Ko_0(\fmod A) \otimes_\mathbb{Z} \mathbb{R}] \rightarrow  \mathbb{R}.
\]
Thus, to a vector $\theta \in \mathbb{R}^l$ one can associate the $\mathbb{R}$-linear form $\theta: \Ko_0(\fmod A) \otimes_\mathbb{Z} \mathbb{R} \rightarrow \mathbb{R}$ given by $\langle \theta , - \rangle$. Following \cite{BKT}, we can define two numerical torsion pairs for each $\theta \in \mathbb{R}^l$: $(\overline{\T}_\theta, \F_\theta)$ and $({\T}_\theta, \overline{\F}_\theta)$, where
\begin{align*} 
\overline{\T}_\theta & =  \{ X \in \fmod A | \theta(X') \geq 0, \text{ for all factor modules } X' \text{ of } X\}, \\ 
{\F}_\theta & =  \{ X \in \fmod A | \theta(X') < 0, \text{ for all submodules } X' \text{ of } X, X'\neq 0 \}, \\ 
{\T}_\theta & =  \{ X \in \fmod A | \theta(X') > 0, \text{ for all factor modules } X' \text{ of } X, X'\neq 0\}, \\ 
\overline{\F}_\theta & =  \{ X \in \fmod A | \theta(X') \leq 0, \text{ for all submodules } X' \text{ of } X\}.
\end{align*}

The intersection $\overline{\T}_\theta \cap \overline{\F}_\theta$ is a wide subcategory of $\fmod A$ of \emph{$\theta$-semistable objects}, that is modules $X$ such that $\theta(X) = 0$ and for all factor modules $X'$ of $X$, we have $\theta(X') \geq 0$. Note that for any $\epsilon \in \mathbb{R}, \epsilon >0$ the torsion pairs corresponding to  $\theta$ and $\epsilon \theta$ coincide. Following \cite{AsaiIyama}, we write 
\[
\theta \geq \eta \text{ if } \theta-\eta \in \mathbb{R}^l_{\geq 0}=\sum_{i=1}^l\mathbb{R}_{\geq 0}[P_i]. 
\]
Clearly $\theta \geq \eta$ implies inclusions of the numerical torsion classes, for example, $\overline{\T}_\eta \subseteq \overline{\T}_\theta$.

For a fixed non-zero module $M\in \fmod A$ one can define
\[\mathcal{D}(M)=\{\theta \in \mathbb{R}^l\mid M \text{ is $\theta$-semistable}   \}.\]
The \emph{stability spaces} $\mathcal{D}(M)$ of codimension one are called \emph{walls.}
The open connected components constituting
\[
\mathbb{R}^l \setminus \overline{\bigcup_{M\in \fmod A, M\neq 0}\mathcal{D}(M)}
\]
are called \emph{chambers}. Walls and chambers give the \emph{wall and chamber structure} of $\Ko_0(\proj A) \otimes_\mathbb{Z} \mathbb{R} \simeq \mathbb{R}^l$. Note that for any $M\neq 0$, $\mathcal{D}(M)$ is contained in some wall \cite[Proposition 2.7]{Asai}.

Let us recall how this relates to two-term partial silting complexes. 
An object $T\in \K^b(A)$ is called \emph{presilting} if $\Hom_{\K^b(A)}(T,T[i])=0$ for all $i>0$. It is silting if and only if the smallest thick subcategory containing $T$ is $\K^b(A)$. If $T$ is two-term presilting it is usually called \emph{partial silting}, in this case it can be written as $T=P^{-1} \rightarrow P^0$ with $P^{-1}, P^0$ projective, the $g$-vector of $T$ is $[T]=[P^0]-[P^{-1}]$. A two-term partial silting complex $T\in \K^b(A)$ is silting if and only if it has $l$ non-isomorphic direct summands. 

Let $T=\oplus_{i=1}^t T_i$ be a basic two-term partial silting complex in $\K^b(A)$ decomposed into indecomposable summands. The \emph{cone} $\C(T)$ of $T$ is the cone spanned in $\mathbb{R}^n$ by the $g$-vectors $\{[T_1], \dots , [T_t]\}$: 
\[
\C(T)=\left\{\sum_{i=1}^t \alpha_i [T_i]\mid \alpha_i\geq 0 \right\}.
\]
For example, the cone of $A$ is the first quadrant. Note that for two basic two-term partial silting complexes $T,T'\in \K^b(A)$ we have $\C(T)\cap \C(T')=\C(X)$, where $X$ is a basic two-term partial silting complex such that $\add X=\add T\cap\add T'$, see \cite{DIJ}.  The
 $g$-vectors of indecomposable two-term partial silting objects are rays
of a simplicial polyhedral fan whose maximal cones are $\C(T)$,
where $T\in \K^b(A)$ are two-term silting complexes. This polyhedral fan is called the \emph{$g$-vector fan} of $A$.

By \cite{Bridgeland, BST, Asai},
$\C(T)^\circ$ is a chamber in the wall and chamber structure of $A$ for any two-term silting complex $T$, and any chamber arises like that. The cones of incomplete two-term partial silting complexes are contained in $\bigcup_{M\in \fmod A, M\neq 0}\mathcal{D}(M)$.

The connection between torsion classes, silting objects and t-structures considered in Section \ref{SecSilt} comes from the Happel-Reiten-Smal{\o} tilt. 

For the standard t-structure $(\D^{\leq 0},\D^{> 0})$ in $\D^b(A)$ and a torsion pair $(\T,\F)$ in $\fmod A$, the HRS-tilt of $(\D^{\leq 0},\D^{> 0})$ with respect to $(\T,\F)$ is a new t-structure in $\D^b(A)$ defined as 
\[
(\U,\V)=(\D^{\leq 0}[1]\star \T,\F \star \D^{> 0}).
\]
In case $T\in \K^b(A)$ is a two-term silting complex, the corresponding t-structure $(\U_T,\V_T)$ is $2$-intermediate. This t-structure restricts to the classically considered t-structure $(\U_T\cap \D^b(A),\V_T\cap \D^b(A))$ on $\D^b(A)$. This restricted t-structure can be obtained via HRS-tilt from the standard t-structure with respect to the torsion pair $(\Fac \HH^{0}(T), \HH^0(T)^{\perp})$ in $\fmod A$, where $\Fac$ denotes the subcategory of factor modules of finite coproducts of $M$. The relation between torsion pairs arising from two-term silting complexes and numerical torsion pairs is explained in \cite[Proposition 3.3]{Yurikusa}. Namely, $\Fac \HH^{0}(T)=\overline{\T}_\theta$ for $\theta=[T]$. So, for a two-term silting complex $T\in \K^b(A)$ the t-structure $(\U_T\cap \D^b(A),\V_T\cap \D^b(A))$ on $\D^b(A)$ is given as the HRS-tilt of the standard t-structure with respect to the numerical torsion pair $(\overline{\T}_\theta, \F_\theta)$ where $\theta$ is the $g$-vector of $T$.

In this section we want to study what our construction from Section \ref{SecSilt} gives when we move around $\mathbb{R}^l$ and approach various vectors $\theta$. 
We will use the following corollary from \cite{AsaiIyama}. Note that in \cite{AsaiIyama} it is stated for $\theta^i \in \Ko_0(\K^b(A))\otimes_\mathbb{Z}\mathbb{Q}$, however, it holds in greater generality for $\theta^i \in \Ko_0(\K^b(A))\otimes_\mathbb{Z}\mathbb{R}$.  
\begin{corollary}\label{CorAI}\cite[Corollary 4.6]{AsaiIyama}
    For $\theta \in \Ko_0(\K^b(A))\otimes_\mathbb{Z}\mathbb{R}$, take $\theta^i \in \Ko_0(\K^b(A))\otimes_\mathbb{Z}\mathbb{R}$ for each $i \in \mathbb{N}$ such that $\theta \leq \theta^i$ and $\underset{i\rightarrow \infty}{\mathrm{lim}} \theta^i=\theta$. Then we have 
    \[\overline{\T}_\theta = \bigcap_{i\in\mathbb{N}} \overline{\T}_{\theta^i}.\]
\end{corollary}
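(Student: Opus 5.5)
This is a restatement of \cite[Corollary 4.6]{AsaiIyama}, which is proved there for rational vectors. The plan is to rerun the argument with real coefficients. The claim is that all the ingredients used there are continuous in $\theta$ and never rely on rationality.

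\emph{Inclusion $\overline{\T}_\theta \subseteq \bigcap_i \overline{\T}_{\theta^i}$.} This is immediate from monotonicity. Since $\theta \leq \theta^i$, we have $\theta^i-\theta \in \sum_j \mathbb{R}_{\geq 0}[P_j]$. For every module $X'$,
\[
\theta^i(X')-\theta(X') = \langle \theta^i-\theta, [X'] \rangle = \sum_j c_j \dim_k \Hom(P_j,X') \geq 0 \quad (c_j\geq 0).
\]
Hence $\theta(X')\geq 0$ for all factor modules $X'$ of $X$ implies the same for $\theta^i$.

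\emph{Reverse inclusion.} Let $X\in \bigcap_i \overline{\T}_{\theta^i}$, and let $X'$ be any factor module of $X$. Then $\theta^i(X')\geq 0$ for all $i$. The function $\eta\mapsto \langle \eta,[X']\rangle$ is $\mathbb{R}$-linear on the finite-dimensional space $\mathbb{R}^l$, hence continuous. Passing to the limit $i\to\infty$ gives $\theta(X')=\lim_i \theta^i(X')\geq 0$. Since $X'$ was an arbitrary factor module, $X\in \overline{\T}_\theta$.

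The key point, and the only place where the generalisation from $\mathbb{Q}$ to $\mathbb{R}$ could matter, is that $\overline{\T}_\theta$ is defined by \emph{non-strict} inequalities. Its defining conditions are therefore closed conditions in $\theta$ for each fixed $X'$, and they pass to limits without any further input. The hypothesis $\theta\leq\theta^i$ is used only for the first inclusion. Nothing in either step uses rationality of the $\theta^i$ or any finiteness property of $A$ beyond $\fmod A$ having finite-length objects with well-defined classes in $\Ko_0(\fmod A)$. So the rational statement of \cite{AsaiIyama} extends verbatim.

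I expect no real obstacle. The one subtlety to flag is that the analogous statement for the strict classes ${\T}_\theta$ would fail, because limits do not preserve strict inequalities. This is why the statement is phrased with $\overline{\T}_\theta$.
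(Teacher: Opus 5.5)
Your proof is correct. One inclusion follows from $\langle [P_j],[X']\rangle=\dim_k\Hom_A(P_j,X')\geq 0$ for every module $X'$, since only the degree-zero term of the Euler form survives. The other follows because, for each fixed factor module $X'$, the condition $\langle \eta,[X']\rangle\geq 0$ is closed in $\eta$.

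The paper gives no argument of its own. It quotes \cite[Corollary 4.6]{AsaiIyama}, which is stated there for rational $\theta^i$, and simply asserts that the real version also holds. Your route is therefore genuinely different: a direct verification from the definitions that uses nothing from \cite{AsaiIyama}. The framing of ``rerunning their argument'' is unnecessary; your proof stands on its own.

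Your argument also supplies the justification for passing from $\mathbb{Q}$ to $\mathbb{R}$, which the paper leaves unproved. It makes transparent that the following are not needed:
\begin{itemize}
\item monotonicity of the sequence $\theta^i$;
\item tameness of $A$;
\item any property of $A$ beyond $\fmod A$ consisting of finite-length modules.
\end{itemize}
It also shows that the hypothesis $\theta\leq\theta^i$ is used only for the inclusion $\overline{\T}_\theta\subseteq\bigcap_i\overline{\T}_{\theta^i}$.

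The citation's only advantage is that it places the statement within Asai--Iyama's theory of semistable torsion classes. For the way the statement is used in Theorem~\ref{LimSilt}, your short argument is all that is required. Your side remark that the analogue for the strict classes $\T_\theta$ fails is also correct: take $\theta=0$ and $\theta^i=\frac{1}{i}[A]$, where $\T_0=0$ but every $\T_{\theta^i}=\fmod A$.
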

The utility of this corollary to our situation is clear since in Section \ref{SecSilt} we described the aisle of a colimit of silting objects as an intersection of aisles. We are ready for the main result of the section which states that the limit of $g$-vectors is compatible with the homotopy colimit of silting objects which answers our initial motivating question. To allow for more freedom we can scale the $g$-vectors of siting complexes by positive multiples, since this does not change the corresponding numerical torsion pair (see Example \ref{Kronecker} for intuition).

\begin{theorem}\label{LimSilt}
    Let $\theta \in \Ko_0(\K^b(A))\otimes_\mathbb{Z}\mathbb{R}$.
    Let $T_i \in \K^b(A)$, $i\geq 0$ be two-term silting complexes such that $[T_i]=\epsilon_i\theta^i$  for some  $\epsilon_i\in \mathbb{R}_{>0}$  and  \[\theta \leq \dots\leq \theta^{i+1}\leq \theta^i\leq \dots \leq \theta^0 \text{ with } \underset{i\rightarrow \infty}{\mathrm{lim}} \theta^i=\theta.\] 
    Let $T\in \D(A)$ be the two-term silting complex constructed from the sequence $T_i$ according to the algorithm described in Theorem \ref{ThmSilt}. Then $\U_{T}\cap \fmod A =\overline{\T}_{\theta}$.
\end{theorem}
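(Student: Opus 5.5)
The plan is to reduce the statement to Theorem \ref{ThmSilt} combined with Corollary \ref{CorAI}. First I check that the sequence $T_i$ satisfies Setup \ref{SiltSet} with $n=2$, i.e.\ that $\U_{i+1}\subseteq \U_i$. Each $T_i$ is a two-term silting complex in $\K^b(A)$, hence a $2$-silting object in $\D(A)$. By \cite[Proposition 3.3]{Yurikusa} we have $\Fac \HH^0(T_i)=\overline{\T}_{[T_i]}=\overline{\T}_{\epsilon_i\theta^i}=\overline{\T}_{\theta^i}$, using that positive scaling does not change numerical torsion classes. From $\theta^{i+1}\leq\theta^i$ we get $\overline{\T}_{\theta^{i+1}}\subseteq\overline{\T}_{\theta^i}$.

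To pass from torsion classes to aisles in $\D(A)$, I use that a $2$-intermediate aisle $\U$ is determined by $\HH^0$ of its objects. Concretely, $\U=\{X\in\D^{\leq 0}\mid \HH^0(X)\in \Fac\HH^0(T)\ \text{(large version)}\}$, and for two-term silting $T$ the large torsion class $\U_T\cap\Mod A=\Gen \HH^0(T)=\varinjlim \Fac\HH^0(T)$. Since $T_{i+1}\in\D^{\leq 0}$ and $\HH^0(T_{i+1})\in\overline{\T}_{\theta^{i+1}}\subseteq \overline{\T}_{\theta^i}\subseteq \U_i$, and $\HH^{-1}(T_{i+1})[1]\in\D^{\leq -1}\subseteq\U_i$, the standard truncation triangle gives $T_{i+1}\in\U_i$. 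So Setup \ref{SiltSet} holds, and Theorem \ref{ThmSilt} yields a $2$-silting $T$ with $\U_T=\bigcap_i\U_i$.

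Now intersect with $\fmod A$: $\U_T\cap\fmod A=\bigcap_i(\U_i\cap\fmod A)$. I need $\U_i\cap\fmod A=\overline{\T}_{\theta^i}$, which is the restriction statement recalled before Corollary \ref{CorAI}: the restricted t-structure on $\D^b(A)$ is the HRS tilt at $(\overline{\T}_{\theta^i},\F_{\theta^i})$, so its aisle meets $\fmod A$ exactly in $\overline{\T}_{\theta^i}$. Hence $\U_T\cap\fmod A=\bigcap_i\overline{\T}_{\theta^i}$. Finally, Corollary \ref{CorAI} applies because $\theta\leq\theta^i$ and $\theta^i\to\theta$, giving $\bigcap_i\overline{\T}_{\theta^i}=\overline{\T}_\theta$.

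The main obstacle is the first step: justifying rigorously that the inclusion of finite-dimensional torsion classes implies the inclusion of the large aisles $\U_{i+1}\subseteq\U_i$ in $\D(A)$. I would handle this via the description of $2$-intermediate aisles through $\HH^0$ and the fact that $\U_i\cap\Mod A$ is closed under quotients, extensions and coproducts, so that it contains $\HH^0(T_{i+1})\in\fmod A$ once it is known to lie in $\overline{\T}_{\theta^i}$. The remaining steps are direct applications of Theorem \ref{ThmSilt} and Corollary \ref{CorAI}.
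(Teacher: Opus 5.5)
Your proposal is correct and follows the paper's proof. You verify Setup \ref{SiltSet} via \cite[Proposition 3.3]{Yurikusa} and the monotonicity $\overline{\T}_{\theta^{i+1}}\subseteq\overline{\T}_{\theta^i}$, apply Theorem \ref{ThmSilt} to get $\U_T=\bigcap_i\U_{T_i}$, intersect with $\fmod A$ using $\U_{T_i}\cap\fmod A=\overline{\T}_{\theta^i}$, and conclude with Corollary \ref{CorAI}. The only difference is that you justify $T_{i+1}\in\U_{T_i}$ explicitly, via the truncation triangle $\HH^{-1}(T_{i+1})[1]\to T_{i+1}\to\HH^0(T_{i+1})\to\HH^{-1}(T_{i+1})[2]$ and extension-closure of the $2$-intermediate aisle, whereas the paper asserts this in one line through the restricted t-structures on $\D^b(A)$.
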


\begin{proof}
By assumption the two-term silting complexes $T_i$ satisfy the conditions of the Setup \ref{SiltSet}, as $\theta^{i} \geq \theta^{i+1}$ implies $\overline{\T}_{\theta^{i+1}} \subseteq \overline{\T}_{\theta^{i}}$ implies $\U_{T_{i+1}} \cap \D^b(A) \subseteq \U_{T_{i}}\cap \D^b(A)$ and $T_{i+1}\in \U_{T_i}$. So, Theorem \ref{ThmSilt} is applicable and gives a two-term silting complex $T\in \D(A)$ with $\U_T=\cap_{i\geq 0}\U_{T_i}$.

Note that  $\U_{T_i}\cap \fmod A =\overline{\T}_{\theta^i}$ by \cite[Proposition 3.3]{Yurikusa}. Thus $\U_{T}\cap \fmod A =\cap_{i\geq 0}\overline{\T}_{\theta^i}$. And by Corollary \ref{CorAI}, $\U_{T}\cap \fmod A =\overline{\T}_{\theta}$.  
\end{proof}

Let us now consider the situation where the vector $\theta$ is a $g$-vector of a two-term partial silting complex $U\in\K^b(A)$, so $\theta$ is a part of the wall in a wall and chamber structure of $A$. In this case more can be said about the homotopy colimit $T$.
To the complex $U$ one can associate not one but two torsion pairs in $\fmod A$: 
\[(\Fac \HH^{0}(U), \HH^0(U)^{\perp})\text{ and }  ({}^\perp \HH^{-1}(\nu U), \Sub  \HH^{-1}(\nu U)),\]
where $\nu$ denotes the Nakayama functor and $\Sub M$ denotes the subcategory of submodules of finite coproducts of $M$. Note that
 $\Fac \HH^{0}(U) \subseteq {}^\perp \HH^{-1}(\nu U)$. 
 If $U$ is two-term silting the two torsion pairs coincide.

 In general $U$ has a Bongartz complement $U'$ which is a two-term partial silting complex, such that $U\oplus U'$ is silting. 
The two-term silting complex $U\oplus U'$ has the associated torsion class ${}^\perp \HH^{-1}(\nu U)={}^\perp \HH^{-1}(\nu (U\oplus U'))$ and any other completion $T=U\oplus V$ of $U$ to a two-term silting complex has the property that 
\[\Fac \HH^{0}(U) \subseteq \Fac \HH^{0}(T) \subseteq {}^\perp \HH^{-1}(\nu U)= {}^\perp \HH^{-1}(\nu (U\oplus U'))=\Fac \HH^{0}(U\oplus U').\]

To construct $U'$ one takes the following triangle:
\[
A\rightarrow U' \rightarrow \bar{U} \xrightarrow{f} A[1]
\]
where the map $f$ is an $\add U$-precover of $A[1]$.

Note that the Bongartz complement can be constructed in such a way that $\bar{U}$ is a coproduct of copies of $U$, in which case $[U\oplus U']= [U]+[A]+[\bar{U}]=m[U]+[A]$. So the vector $\mu=[U]+\frac{1}{m}[A]$ is in the chamber of the Bongartz completion $U\oplus U'$ of $U$ and has the property $\mu>\theta=[U]$.

By \cite[Proposition 3.3]{Yurikusa}, $\overline{\T}_\theta={}^\perp \HH^{-1}(\nu U)$ for any $g$-vector in the interior of the cone corresponding to $U$, in particular, for $[U]$.
Combining various considerations we can make the following conclusion:

\begin{corollary}\label{bongartz}
    Let $\theta \in \Ko_0(\K^b(A))\otimes_\mathbb{Z}\mathbb{R}$ be a $g$-vector corresponding to a two-term  partial silting complex $U \in \K^b(A)$.
    Let $T_i \in \K^b(A)$, $i\geq 0$ be two-term silting complex such that \[[T_i]=\epsilon_i\theta^i \text{ for some } \epsilon_i\in \mathbb{R}_{>0} \text{ and }\theta \leq \dots\leq \theta^{i+1}\leq \theta^i\leq \dots \leq \theta^0 \text{ with } \underset{i\rightarrow \infty}{\mathrm{lim}} \theta^i=\theta.\] 
    Let $T\in \D(A)$ be the two-term silting complex constructed from the sequence $T_i$ according to the algorithm described in Theorem \ref{ThmSilt}.
    Then  
    $\Add(T)=\Add(U\oplus U')$, where $U'$ is the Bongartz complement of $U$. 
\end{corollary}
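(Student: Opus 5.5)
The plan is to compare the two aisles $\U_T$ and $\U_{U\oplus U'}$ in $\D(A)$. Since both $T$ and $U\oplus U'$ are two-term silting objects, equal aisles give $\Add(T)=\Add(U\oplus U')$. By Theorem \ref{LimSilt}, $\U_T\cap\fmod A=\overline{\T}_\theta$. By \cite[Proposition 3.3]{Yurikusa}, $\overline{\T}_\theta={}^\perp \HH^{-1}(\nu U)=\Fac \HH^0(U\oplus U')$, and the latter equals $\U_{U\oplus U'}\cap\fmod A$, again by \cite[Proposition 3.3]{Yurikusa} applied to the silting complex $U\oplus U'$. So the restrictions of the two aisles to $\fmod A$ agree, and everything reduces to showing that a two-term silting object in $\D(A)$ is determined by the finitely presented part of its aisle.

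For the reduction, I would use that any $2$-intermediate aisle $\U$ in $\D(A)$ is determined by its torsion class $\U\cap\Mod A$ (HRS-tilt description): $\U=\D^{\leq -1}\star(\U\cap\Mod A)$. So it suffices to show $\U_T\cap\Mod A=\U_{U\oplus U'}\cap\Mod A$.

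The aisle $\U_{U\oplus U'}$ is compactly generated, so $\U_{U\oplus U'}\cap\Mod A=\varinjlim \Fac\HH^0(U\oplus U')$, i.e.\ the direct-limit closure of its finitely presented part. I need the same for $\U_T$; this is the main obstacle. First I would try to show $\U_T$ is closed under directed homotopy colimits. Each $\U_{T_i}$ comes from a compact silting complex, so its torsion class $\Gen \HH^0(T_i)$ is closed under direct limits. Therefore $\U_T\cap\Mod A=\bigcap_i\Gen\HH^0(T_i)$ is a torsion class in $\Mod A$ closed under direct limits, i.e.\ a definable torsion class. Over a finite-dimensional algebra, a definable torsion class $\mathcal{T}$ satisfies $\mathcal{T}=\varinjlim(\mathcal{T}\cap\fmod A)$, by the Crawley-Boevey correspondence between definable torsion classes and torsion classes of $\fmod A$. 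This gives $\U_T\cap\Mod A=\varinjlim\overline{\T}_\theta=\U_{U\oplus U'}\cap\Mod A$, and hence equality of aisles and the corollary.

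If the definability route needs justification, there is an alternative. Use $\theta\le\theta^i$, i.e.\ $\overline{\T}_\theta\subseteq\overline{\T}_{\theta^i}$, to get $U\oplus U'\in\U_{T_i}$ for all $i$, hence $\U_{U\oplus U'}\subseteq\U_T$. The reverse inclusion can then be checked on the torsion classes as above.
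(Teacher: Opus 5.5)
\textbf{There is a genuine gap.} Your reduction is the same as the paper's. You compute $\U_T\cap\fmod A=\overline{\T}_\theta=\U_{U\oplus U'}\cap\fmod A$ via Theorem~\ref{LimSilt} and \cite[Proposition 3.3]{Yurikusa}. You then need that a two-term silting t-structure in $\D(A)$ is determined by its restriction to $\fmod A$. The paper takes this last step from \cite[Proposition 6.4]{MZ}, and your attempt to prove it yourself fails.

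A minor point first. ``Closed under direct limits, i.e.\ definable'' is wrong: every torsion class in $\Mod A$ is closed under direct limits, since a direct limit is a quotient of a coproduct. Definability also needs closure under products and pure submodules. This is repairable: each $\U_{T_i}\cap\Mod A=\{X\mid\Hom_{\D(A)}(T_i,X[1])=0\}$ is cut out by a coherent functor because $T_i$ is compact, so the intersection is definable.

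The real failure is the next claim, that a definable torsion class $\mathcal{T}$ over a finite-dimensional algebra satisfies $\mathcal{T}=\varinjlim(\mathcal{T}\cap\fmod A)$. This is false. Crawley-Boevey's correspondence concerns torsion pairs whose \emph{torsion-free} class is closed under direct limits (the cosilting side); for those the torsion class is $\varinjlim$ of its finitely presented part. The torsion classes $\U\cap\Mod A$ of two-term silting t-structures are definable, but they are a different, in general larger, lift of their finite-dimensional part. Here is a counterexample over the Kronecker algebra; write $\mathbf{q}$ for the preinjectives and $\mathbf{r}$ for the regular modules.
\begin{itemize}
\item The class of divisible modules $\{X\mid \Ext^1(R,X)=0 \text{ for all finite-dimensional regular } R\}$ is a definable torsion class (the tilting class of the Reiten--Ringel module).
\item Its finite-dimensional part is $\add\mathbf{q}$.
\item It contains the Pr\"ufer modules. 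These receive no nonzero maps from preinjectives, so they are not in $\varinjlim\add\mathbf{q}$.
\end{itemize}

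Your argument for $\U_T\cap\Mod A=\varinjlim(\U_T\cap\fmod A)$ never uses $\theta=[U]$, so it would apply verbatim to Example~\ref{Kronecker}, where it gives a false conclusion. There $T$ is equivalent to the Lukas tilting module $L$. The generic module $G$ lies in $\U_T\cap\Mod A=\mathrm{Gen}\,L$, since it has no nonzero maps to preprojectives. On the other hand $\Hom(\mathbf{r}\cup\mathbf{q},G)=0$, so $G\notin\varinjlim\overline{\T}_\theta$.

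The same gap breaks your alternative route. The inclusion $\U_{U\oplus U'}\subseteq\U_T$ is correctly obtained from $U\oplus U'\in\U_{T_i}$ for all $i$. But the reverse inclusion is again ``checked on torsion classes as above''. What you are missing is exactly the injectivity of restriction to $\fmod A$ on two-term silting t-structures, i.e.\ \cite[Proposition 6.4]{MZ}. If you want to prove it yourself, the correct recovery formula for such a torsion class $\mathcal{T}$ is $\mathcal{T}={}^{\perp}(\mathcal{T}^{\perp}\cap\fmod A)$, with orthogonals taken in $\Mod A$. So $\mathcal{T}$ is determined by the finite-dimensional torsion-free class, not by the direct-limit closure of the finite-dimensional torsion class. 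In your situation this gives $\U_T\cap\Mod A={}^{\perp}\F_\theta=\U_{U\oplus U'}\cap\Mod A$, and your HRS reduction then finishes the proof.
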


\begin{proof}
By Theorem \ref{LimSilt}, $\U_{T}\cap \fmod A =\overline{\T}_{\theta}$.
On the other hand by \cite[Proposition 3.3]{Yurikusa},  $\overline{\T}_\theta={}^\perp \HH^{-1}(\nu U)$. We get $\overline{\T}_\theta ={}^\perp \HH^{-1}(\nu (U\oplus U'))$. This is the torsion class corresponding to the restriction of the t-structure $(\U_{U\oplus U'},\V_{U\oplus U'})$ to $\D^b(A)$, so $\U_{U\oplus U'}\cap \fmod A =\overline{\T}_{\theta}$ as well.

By \cite[Proposition 6.4]{MZ}, restriction to $\fmod A$ gives a bijection between 2-intermediate silting t-structures in $\D(A)$ and torsion pairs in $\fmod A$, hence the t-structures in $\D(A)$ corresponding to the two silting complexes $T$ and $U\oplus \U'$ coincide. We get $\Add(T)=\Add(U\oplus U')$.
\end{proof}

\begin{remark}
    Note that in the situation of Corollary \ref{bongartz} we can construct a sequence 
    $\theta \leq \dots\leq \theta^{i+1}\leq \theta^i\leq \dots \leq \theta^0$  with $\underset{i\rightarrow \infty}{\mathrm{lim}} \theta^i=\theta$
    such that each $\theta^i$ is a positive multiple of a $g$-vector of a two-term silting complex $T_i$ and each $\theta^i$ belongs to the chamber of the Bongartz completion $U\oplus U'$. For that one can take the vectors $\theta^i=[U]+\frac{1}{i}[A]$, $i>m$. Indeed $\theta^i$ is a positive multiple of $(i-m+1)[U]+(m-1)[U]+[A]=(i-m+1)[U]+[U']$, so it belongs to the chamber of the Bongartz completion. The corresponding two-term silting complex is $T_i=U^{(i-m+1)}\oplus U'$ and $\underset{i\rightarrow \infty}{\mathrm{lim}} \theta^i=\theta$.
\end{remark}

\begin{remark}
     Dual results for the case of two-term cosilting complexes can be obtained replacing $\K^b(\proj A)$ with $\K^b(\inj A)$.
\end{remark}

Let us now restrict our attention to tame algebras. One of our motivations for this section was the following result by Plamondon-Yurikusa.

\begin{theorem}\cite[Theorem 4.1]{PlamondonYurikusa}
    Let $A$ be a finite-dimensional basic algebra over an
algebraically closed field. If $A$ is tame, then its $g$-vector fan is dense in $\mathbb{R}^l$. That means that the closure of the union of the cones of the two-term silting complexes over $A$ is $\mathbb{R}^l$.
\end{theorem}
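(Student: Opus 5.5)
The statement is \cite[Theorem 4.1]{PlamondonYurikusa} and the paper only quotes it. If I were to prove it, I would go through generic decompositions of projective presentations in the sense of Derksen–Fei. For an integer vector $g\in \Ko_0(\proj A)\cong\mathbb{Z}^l$, write $g=[P^0]-[P^{-1}]$ with $P^0,P^{-1}$ having no common summands. The space $\mathrm{PHom}(g)=\Hom_A(P^{-1},P^0)$ is irreducible, so a general presentation $d\in \mathrm{PHom}(g)$ has well-defined generic invariants. The first is $E(g,g)$, the generic value of $\Hom_{\K^b(A)}(d,d'[1])$ for general $d,d'$. The second is the generic decomposition $g=g_1\oplus\dots\oplus g_s$ into generically indecomposable summands with $E(g_i,g_j)=0$ for $i\neq j$.

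The basic dictionary I would use is: $E(g,g)=0$ if and only if a general presentation is presilting. In that case $g$ lies in the cone $\C(U)$ of a two-term partial silting complex $U$, hence in the closure of a maximal cone, since every partial silting complex completes to a silting one (Bongartz).

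So density is equivalent to the following claim: the set $N=\{\theta\in\mathbb{R}^l \mid \theta\notin \bigcup_T \C(T)\}$, taken over two-term silting complexes $T$, has empty interior. Since the union of cones is closed under positive scaling and rational points are dense, it suffices to show the following. Whenever an open cone $O\subseteq \mathbb{R}^l$ avoids all chambers, one obtains a contradiction from the integer points of $O$. Every integer point $g\in O$ then has a non-rigid generic presentation, so its generic decomposition contains a summand $h$ with $E(h,h)\neq 0$.

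The key step is a growth comparison.

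\emph{Lower bound.} Inside an open cone $O$ of non-rigid vectors I would pick a full-dimensional family of integer points. I would argue, using upper semicontinuity of $E$ and additivity of generic decompositions under positive multiples, that $E(kg,kg)$ must grow quadratically in $k$ for generic $g\in O$. Equivalently, the moduli of presentations in $\mathrm{PHom}(kg)$ has families of cokernels whose dimension grows faster than linearly.

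\emph{Upper bound.} Here tameness enters, via Geiss–Labardini-Fragoso–Schröer. For a tame algebra, every generically indecomposable non-rigid $h$ is \emph{$E$-tame}, meaning $E(h,h)=0$ on a dense set of pairs. Moreover, the general presentations of $kh$ decompose as $h^{\oplus k}$ up to at most one-parameter families. Consequently, every non-rigid $g$ is a sum of rigid summands and multiples of $E$-tame summands, and $E(kg,kg)$ grows at most linearly.

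These two bounds contradict each other, so $N$ has empty interior. Hence the closure of $\bigcup_T\C(T)$ is all of $\mathbb{R}^l$.

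I expect the main obstacle to be the lower bound, i.e.\ showing that a full-dimensional cone of non-rigid vectors forces superlinear growth. If this fails, my first fallback is to show directly that each $E$-tame $g$ lies on a wall $\mathcal{D}(M)$ of codimension at least one, for $M$ the generic cokernel. That would give $N\subseteq \bigcup\mathcal{D}(M)$ with only countably many relevant rational hyperplanes, so $N$ would be a countable union of nowhere dense sets and hence have empty interior by Baire.
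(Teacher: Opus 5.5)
The paper does not prove this result. It quotes Plamondon--Yurikusa and uses the result as a black box in Lemma~\ref{Lem:g-vect}, so your proposal has to stand on its own. Your reductions are fine: density is equivalent to $N$ having empty interior, and a rigid $g$ lies in some $\C(U)\subseteq\C(U\oplus U')$. The argument nevertheless has a genuine gap.

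First, your dictionary is false for the invariant you defined. If $E(g,g)$ is the generic value of $\dim\Hom(d,d'[1])$ over independent pairs, its vanishing does not make a general $d$ presilting. Presilting is the diagonal condition $\Hom(d,d[1])=0$, and an upper semicontinuous function can jump on the diagonal. The limiting ray $\theta=[P_2]-[P_1]$ of Example~\ref{Kronecker} is a counterexample. For general $d,d'\colon P_1\to P_2$, the null-homotopic maps $d\to d'[1]$ form the span of $d$ and $d'$, which is all of $\Hom(P_1,P_2)\cong k^2$, so $\Hom(d,d'[1])=0$. On the other hand, $\Hom(d,d[1])\cong k$. Your own later step (a non-rigid $h$ with $E(h,h)=0$) contradicts the dictionary. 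So the proposal silently switches between the pair invariant and the diagonal one, i.e.\ the dimension of the family of cokernels.

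More seriously, the lower bound carries the whole content of the theorem, and you give no argument for it. Upper semicontinuity and the behaviour of generic decompositions under multiples do not produce superlinear growth. For a tame algebra the pair invariant $E(kg,kg)$ vanishes for every $g$, because each generically indecomposable summand has $E(h,h)=0$ and distinct summands satisfy $E(g_i,g_j)=0$. The diagonal invariant grows at most linearly by your own upper bound. So the claim that an open cone avoiding all chambers forces superlinear growth is, for tame algebras, just a restatement of what you want to prove. Superlinear growth is a wild phenomenon (e.g.\ the Kronecker quiver with three arrows), not a consequence of non-rigidity.

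The fallback does not help either. Showing that each integral E-tame $g$ lies on a wall controls only countably many rays, which form a meagre set anyway. Baire would need all of $N$ to lie in countably many hyperplanes, including points orthogonal to no nonzero integer vector, and such points lie on no wall $\mathcal{D}(M)$ at all.

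The missing idea is a mechanism showing that a non-rigid $g$ with $E(g,g)=0$ is a limit of rays of rigid vectors. This is what happens in Example~\ref{Kronecker}, where $[T_i]=2i\,\theta+[A]$. Once every integral vector lies in the closed cone $\overline{\bigcup_T\C(T)}$, density of $\mathbb{Q}^l$ finishes the proof.
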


As the next lemma shows, for a tame algebra $A$ we can approach any vector $\theta$ in $\mathbb{R}^l$ via a sequence of vectors $\theta \leq \dots\leq \theta^{i+1}\leq \theta^i\leq \dots \leq \theta^0$ in $\mathbb{R}^l$ such that each $\theta^i$ is a positive multiple of a silting $g$-vector, which puts us into the context of Theorem \ref{LimSilt}.

\begin{lemma}\label{Lem:g-vect}
    Let $A$ be a basic tame finite-dimensional algebra. For any $\theta$ in $\mathbb{R}^l$ there exists a sequence of vectors $\theta^i$ such that $\theta \leq \dots\leq \theta^{i+1}\leq \theta^i\leq \dots \leq \theta^0$, $\underset{i\rightarrow \infty}{\mathrm{lim}} \theta^i = \theta$, and for each $i$, there exists $\epsilon_i>0$ such that $\epsilon_i \theta_i$ is a $g$-vector of a two-term silting complex in $\K^b(A)$.
\end{lemma}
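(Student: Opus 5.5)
The plan is to use the density of the $g$-vector fan (Plamondon--Yurikusa) together with the fact that the positive orthant $\theta+\mathbb{R}^l_{>0}$ is open. The union of cones of two-term silting complexes is dense in $\mathbb{R}^l$. It is a union of closed cones whose interiors are the chambers. The complement of the union of the interiors $\C(T)^\circ$ lies in the union of cones of codimension at least one. So the union of chambers is itself dense, because a dense union of closed cones minus a finite-or-countable collection of lower-dimensional cones remains dense. In fact it is enough to argue that any nonempty open set meets the interior of some maximal cone. This holds because a nonempty open set meeting the union of cones densely cannot be covered by the nowhere dense union of lower-dimensional faces. The faces are countably many (two-term partial silting complexes are countably many up to isomorphism, since they are given by finite data over $k$ — or, if $k$ is uncountable, there are still only countably many $g$-vectors in $\mathbb{Z}^l$). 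So Baire category applies.

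With this in hand, I will construct the $\theta^i$ inductively. Let $e=\sum_j [P_j]$ and set $B_i=\{\eta \mid \theta<\eta,\ \|\eta-\theta\|<1/i\}$, where $<$ means every coordinate is strictly larger. This set is open and nonempty.

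First pick $\eta^0\in B_1$ lying in some chamber $\C(T_0)^\circ$. Inductively, given $\theta^{i}$ with $\theta<\theta^i$ strictly in every coordinate, the set $\{\eta \mid \theta<\eta<\theta^i\}\cap B_{i+2}$ is open and nonempty; a point like $\theta+\delta e$ with small $\delta$ lies in it. Choose $\theta^{i+1}$ in it lying in the interior of some cone $\C(T_{i+1})$.

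A vector in the interior $\C(T)^\circ$ of a silting cone is a positive combination $\sum \alpha_j[T^{(j)}]$ of the $g$-vectors of the indecomposable summands. Since it is only required that some positive multiple $\epsilon_i\theta^i$ be the $g$-vector of a two-term silting complex, I first need the $\alpha_j$ to be positive rationals. Then scaling by a common denominator gives integer multiplicities $m_j>0$, and $\bigoplus (T^{(j)})^{m_j}$ is two-term silting with $g$-vector $\epsilon_i\theta^i$. Therefore in the inductive choice I will additionally require $\theta^{i}$ to lie in the rational points of the chamber, i.e.\ in the positive rational span of the $[T^{(j)}]$. This set is dense in the chamber because the $[T^{(j)}]$ form an $\mathbb{R}$-basis of $\mathbb{R}^l$, so the positive rational span is dense in the open simplicial cone. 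The inductive choice therefore remains possible.

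Finally, the constructed sequence is decreasing and satisfies $\theta\le\theta^i$ and $\|\theta^i-\theta\|<1/(i+1)$, which gives the limit. The main obstacle is the density of the chambers (interiors) rather than merely of the union of closed cones. I would first try the Baire/countability argument above. Alternatively, I would observe directly that the complement of the union of interiors within the union of cones is contained in a countable union of hyperplane pieces.
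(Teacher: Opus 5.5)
\textbf{Gap in the density-of-chambers step.} Your overall scheme is the paper's. You choose $\theta^{i+1}$ in the open box $\{\eta \mid \theta<\eta<\theta^i\}$ intersected with a small ball around $\theta$, use Plamondon--Yurikusa to hit the $g$-vector fan, move into a chamber, insist on rational data and clear denominators. Your requirement of rational coefficients with respect to the basis $[T^{(j)}]$ is equivalent to the paper's choice $\theta^{i+1}\in\mathbb{Q}^l$. The problem is the step you flag as the main obstacle, namely that every nonempty open set $O$ meets some chamber: your Baire-category justification does not work. A countable union of nowhere dense closed sets is meagre, not nowhere dense, and a meagre set can be dense. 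For example, the rays in all rational directions in $\mathbb{R}^2$ form a dense countable union of closed lower-dimensional cones. So knowing that $\Sigma=\bigcup_T \C(T)$ is dense, and that (if $O$ met no chamber) $O\cap\Sigma$ would lie in the countable union of lower-dimensional faces, gives no contradiction. The Baire category theorem only says that $O$ itself is not meagre, not that the dense subset $O\cap\Sigma$ is non-meagre. Your alternative, ``a countable union of hyperplane pieces'', fails for the same reason. The counting of faces is therefore beside the point.

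\textbf{The fix.} What is needed is that the cones in the Plamondon--Yurikusa statement are full-dimensional. For a two-term silting $T$, the $l$ vectors $[T^{(1)}],\dots,[T^{(l)}]$ form a basis, which you note yourself later. Hence $\C(T)$ is a simplicial cone with $\C(T)=\overline{\C(T)^\circ}$. If $O$ is open, density of $\Sigma$ gives that $O$ meets some $\C(T)$, and therefore $O$ meets $\C(T)^\circ$. No countability or Baire argument is required. This is exactly the paper's remark that $\theta^{i+1}$ can be varied into a chamber because cones of non-complete partial silting complexes have codimension at least one. With this one-line replacement, the rest of your argument goes through as written.
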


\begin{proof}
Choose $\theta^0\in \mathbb{Q}^l$ such that $\theta^0 >\theta$ and such that $\epsilon_0\theta^0$ is a $g$-vector of a two-term silting complex for some $\epsilon_0>0$, for example one can take $\theta^0$ to be the vector $\sum_{i=1}^l M[P_i]$ for large enough $M$. For each $\theta^i$ starting with $\theta^0$ consider the set $B_i:=\{ v \in \mathbb{R}^l \mid \theta^{i} > v > \theta\}$. This is an open set by induction. Let us choose $\theta^{i+1} \in B_i \cap B_\theta(\frac{1}{i+1})$, where $B_\theta(c)$ is an open ball of radius $c$ with the center in $\theta$, such that the following condition holds: $\epsilon_{i+1}\theta^{i+1}$ is a $g$-vector of a two-term silting complex for some $\epsilon_{i+1}>0$. This is possible by \cite[Theorem 4.1]{PlamondonYurikusa} since the open set $B_i \cap B_\theta(\frac{1}{i+1})$ must contain an element $\theta^{i+1}$ from the $g$-vector fan of $A$. Note that we can choose $\theta^{i+1}$ to be a vector inside of some chamber of the wall and chamber structure of $A$ by varying $\theta^{i+1}$, since cones of non-complete two-term partial silting complexes are at least of codimension $1$. Similarly we can choose $\theta^{i+1}$ to be in $\mathbb{Q}^l$, since there is a small ball around $\theta^{i+1}$ which is both inside of the chamber of $\theta^{i+1}$ and inside $B_i \cap B_\theta(\frac{1}{i+1})$ and $\mathbb{Q}^l$ is dense in $\mathbb{R}^l$. 
Since $\theta^{i+1}$ is inside of some chamber, there is a two-term silting complex $T_{i+1}$ corresponding to this chamber with indecomposable summand $T_{i+1}^j$. Since $\theta^{i+1}$ has rational entries, it is a combination of $[T_{i+1}^j]$ with positive rational coefficients. Choosing $\epsilon_{i+1}\in\mathbb{Z}_{>0}$ big enough we can guarantee that the coefficients in the decomposition of $\epsilon_{i+1}\theta^{i+1}$ along the basis $[T_{i+1}^j]$ are all positive integers, hence $\epsilon_{i+1}\theta^{i+1}$ is a $g$-vector of a two-term silting complex in $\K^b(A)$. By construction $\underset{i\rightarrow \infty}{\mathrm{lim}} \theta^i = \theta$.
\end{proof}

Using this lemma and Theorem \ref{LimSilt}, for any vector $\theta \in \mathbb{R}^l$, the torsion pair $(\overline{\T}_\theta,\F_\theta)$ can be obtained from a two-term silting complex $T \in \D(A)$, which is a homotopy colimit of two-term silting complexes in $\K^b(A)$.

\begin{corollary}\label{CorTame}
    Let $A$ be a basic tame finite-dimensional algebra. Let $\theta$ be any vector in $\mathbb{R}^l$.
    Let $\theta^i$ be a sequence of vectors constructed in Lemma \ref{Lem:g-vect} and let $T_i$ be the corresponding two-term silting complexes in $\K^b(A)$ with $[T_i]=\epsilon_i\theta^i$. If $T \in \D(A)$ is the two-term silting complex constructed from the sequence $T_i$ according to the algorithm described in Theorem \ref{ThmSilt},
then $\U_{T}\cap \fmod A =\overline{\T}_{\theta}$.
\end{corollary}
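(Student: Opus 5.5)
The corollary is a direct combination of Lemma \ref{Lem:g-vect} and Theorem \ref{LimSilt}. The plan is to check that the data produced by the lemma satisfy every hypothesis of the theorem, and then read off the conclusion.

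First, fix $\theta\in\mathbb{R}^l$ and apply Lemma \ref{Lem:g-vect}. This gives a sequence $\theta^i$ with
\[\theta\leq\dots\leq\theta^{i+1}\leq\theta^i\leq\dots\leq\theta^0,\qquad \underset{i\rightarrow\infty}{\mathrm{lim}}\,\theta^i=\theta,\]
together with scalars $\epsilon_i>0$ such that $\epsilon_i\theta^i$ is the $g$-vector of a two-term silting complex $T_i\in\K^b(A)$. This is exactly the form $[T_i]=\epsilon_i\theta^i$ required in Theorem \ref{LimSilt}.

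Next, we must confirm that the algorithm of Theorem \ref{ThmSilt} can be run on the sequence $T_i$, i.e.\ that Setup \ref{SiltSet} holds with $n=2$.
\begin{enumerate}
\item Two-term silting complexes in $\K^b(A)$ are $2$-silting objects in $\D(A)$.
\item The inequality $\theta^{i+1}\leq\theta^i$ gives $\overline{\T}_{\theta^{i+1}}\subseteq\overline{\T}_{\theta^i}$.
\item By \cite[Proposition 3.3]{Yurikusa}, these are the torsion classes $\U_{T_{i+1}}\cap\fmod A$ and $\U_{T_i}\cap\fmod A$ respectively. Since scaling does not change the numerical torsion pair, the $\epsilon_i$ play no role here.
\end{enumerate}
This argument is already carried out at the start of the proof of Theorem \ref{LimSilt}, so it can simply be cited.

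Finally, Theorem \ref{LimSilt} applies to the resulting two-term silting complex $T\in\D(A)$ and yields $\U_T\cap\fmod A=\overline{\T}_\theta$. The one point that needs care is the passage from inclusions of torsion classes to inclusions of aisles, which gives $T_{i+1}\in\U_{T_i}$. The cleanest route is the bijection of \cite[Proposition 6.4]{MZ} between $2$-intermediate silting t-structures and torsion pairs in $\fmod A$: it is order-preserving, so an inclusion of torsion classes gives an inclusion of aisles. Since this step is already part of Theorem \ref{LimSilt}, I expect no real obstacle; the corollary is essentially immediate.
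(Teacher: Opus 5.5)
Your proposal is correct and follows the paper's own proof, which simply states that the corollary follows from Theorem~\ref{LimSilt} and Lemma~\ref{Lem:g-vect}. Your extra check of Setup~\ref{SiltSet}, including the order-preserving bijection of \cite[Proposition 6.4]{MZ} to pass from torsion classes to aisles, is already part of the proof of Theorem~\ref{LimSilt}; it is a harmless elaboration rather than a different route.
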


\begin{proof}
    This follows from Theorem \ref{LimSilt} and Lemma \ref{Lem:g-vect}.
\end{proof}

\begin{figure}[!ht]
\centering
\begin{tikzpicture}[scale=0.80]
\tikzstyle{every node}=[font=\fontsize{10.9pt}{15.5pt}\selectfont]
\draw [-{Stealth[scale=1.5]}, ] (6.25,9.75) -- (13.75,9.75);
\draw [-{Stealth[scale=1.5]}, ] (10,6) -- (10,13.5);
\draw [dashed] (10,9.75) -- (13.75,6);
\draw  (10,9.75) -- (13.75,7.875);
\draw  (10,9.75) -- (11.875,6);
\node [font=\fontsize{10.9pt}{15.5pt}\selectfont, inner xsep=0.080cm, inner ysep=0.085cm, rounded corners=0.020cm] at (10.5,13.375) {$[P_1]$};
\node [font=\fontsize{10.9pt}{15.5pt}\selectfont, inner xsep=0.080cm, inner ysep=0.085cm, rounded corners=0.020cm] at (14.125,9.625) {$[P_2]$};
\draw  (10,9.75) -- (13.75,7.25);
\draw  (10,9.75) -- (12.5,6);
\draw  (10,9.75) -- (13.75,7);
\draw  (10,9.75) -- (12.75,6);
\node [font=\fontsize{10.9pt}{15.5pt}\selectfont, inner xsep=0.080cm, inner ysep=0.085cm, rounded corners=0.020cm] at (13.75,6.625) {\small $\vdots$};
\node [font=\fontsize{11.9pt}{15.5pt}\selectfont, inner xsep=0.080cm, inner ysep=0.085cm, rounded corners=0.020cm] at (13.25,6) {\small $\hdots$};

\draw [-{Stealth[scale=1.5]}, ] (12,11.375) arc[start angle=53.37, end angle=-36.63, radius=3.0375];

\node [font=\fontsize{11.9pt}{15.5pt}\selectfont, inner xsep=0.080cm, inner ysep=0.085cm, rounded corners=0.020cm] at (13.95,5.8) { $\theta$};
\end{tikzpicture}
\caption{The wall and chamber structure for the Kronecker quiver.}
\label{fig:my_label}
\end{figure}

\begin{example}\label{Kronecker}
    Let $A=kQ$ be the path algebra of the Kronecker quiver, $Q=1\rightrightarrows 2$. The wall and chamber structure of $A$ is well know and is schematically depicted on Figure \ref{fig:my_label}. In particular, there is a sequence of two-term silting complexes \[T_0=A=P_1\oplus P_2, \mbox{ } T_1=P_2\oplus (P_1\rightarrow P_2^{(2)}), \dots , T_i= (P_1^{(i-1)}\rightarrow P_2^{(i)}) \oplus (P_1^{(i)}\rightarrow P_2^{(i+1)}), \dots  \]
    In the basis $\{[P_2],[P_1]\}$ we have $[T_i]=\binom{2i+1}{-(2i-1)}$. We can take $\theta^i=\binom{(2i+1)/2i}{-(2i-1)/2i}$ and $\epsilon_i=2i$, $i>0$, $\epsilon_0=2$. We get $\underset{i\rightarrow \infty}{\mathrm{lim}} \theta^i = \binom{1}{-1}$. One can think of this process as moving from the first quadrant corresponding to $A$ to the vector $\binom{1}{-1}$ going through the chambers of $T_i$'s as depicted in Figure \ref{fig:my_label}. All these choices place us in Setup \ref{SiltSet}. Thus, by Theorem \ref{ThmSilt} one can construct a diagram in $\Ch(A)$ whose homotopy colimit is a two-term silting complex $T\in\D(A)$ such that $\U_{T}\cap \fmod A =\overline{\T}_{\theta}$. Note that $\overline{\T}_{\theta}$ is the torsion class consisting of all preinjective and regular modules, hence, by \cite[Section 3.4~(3)]{AngeleriInf}, the two-term silting complex $T$ is additively equivalent in $\D(A)$ to the Lukas tilting module~$L$.  
\end{example}

\bibliographystyle{abbrv}
\bibliography{references.bib}

\end{document}